\definecolor{black}{rgb}{0.0, 0.0, 0.0}
\newtheorem{teo}{Theorem}
\newtheorem{lem}{Lemma}
\newtheorem{defi}{Definition}
\newtheorem{obs}{Remark}
\newtheorem{col}{Corollary}
\begin{document}

	\title{\textbf{Theoretical analysis for a PDE-ODE system related to a Glioblastoma tumor with vasculature}
		\thanks{The authors were supported by PGC2018-098308-B-I00 (MCI/AEI/FEDER, UE)}
	}
	\author{ A. Fernández-Romero$^{2}$, F. Guillén-González$^{2}$\footnote{ORCID: 0000-0001-5539-5888}, A. Suárez$^{1\;2}$\footnote{ORCID: 0000-0002-3701-6204}.\\ 
		\small{$^{1}$Corresponding author.}\\
	\small{$^{2}$Dpto. Ecuaciones Diferenciales y Análisis Numérico,}\\
	\small{Facultad de Matemáticas, Universidad de Sevilla. Sevilla, Spain.}\\
	\small{\texttt{afernandez61@us.es, guillen@us.es, suarez@us.es}}	
}
	\date{}
\maketitle

\section*{\centering{\textbf{Abstract}}}
In this paper we study a PDE-ODE system as a simplification of a Glioblastoma model. Mainly, we prove the existence and uniqueness of global in time classical solution using a fixed point argument. Moreover, we show some stability results of the solution depending on some conditions on the parameters.
\\
\\
\textbf{Mathematics Subject Classification.} $35\text{A}09,35\text{B}40,35\text{M}10,35\text{Q}92,47\text{J}35$\\
\textbf{Keywords:} Tumor model, Glioblastoma, PDE-ODE system, Classical solution.

\section{Introduction}
Glioblastoma (GBM) is one of the most lethal malignant brain tumor with a survival of $14.6$ months \cite{Ostrom_2014}. These include the presence of necrosis and high proliferation of cells. The magnetic resonance imaging shows a necrotic area in the center surrounded by a white ring. This ring is an indicator of areas with poor vasculature. Clinical, molecular and imaging parameters have been used to build mathematical models able to classify GBM patients in terms of survival, identify GBM subtypes, predict response to treatment, etc \cite{Gillies_2016, Ellingson_2014, Abrol_2017, Narang_2016}.
\\

Mathematical modelling has been presented as an additional tool to better understand the evolution, prediction the outcome and different therapies or classifies patients according to prognosis. Thus, the mathematical modelling of GBM is being a relatively broad topic in the community of applied mathematics. One of the reasons that explain this limitation is that either the key biological variables have not been included or real data of sufficient quality have not been used.
\\


Recently, in \cite{Victor_2020}, Molab\footnote{\href{Molab}{http://matematicas.uclm.es/molab/}} group has proposed a mathematical model of glioblastoma growth to explain the correlation between magnetic resonance images and tumor growth speed. For that, two variables are considered: tumor population and necrosis, and they quantify the tumor ring and obtain a relation with the survival. In particular, the group classifies the GMB with respect to the tumor ring and the irregular growth, see \cite{Julian_2016} and \cite{Victor_2018}. We have completed the above model including an essential variable: the vasculature since it is well-known that vasculature plays a relevant role in the tumor growth. Moreover, our model with vasculature is able to capture some phenomena that Molab group has studied about GBM such that the ring width-volume in \cite{Victor_2020,Julian_2016} and the regularity surface of the tumor in \cite{Victor_2018}.
\\

Specifically, let $\Omega\subseteq\mathbb{R}^3$ be a bounded domain and $\left(0,T_f\right)$ a time interval, with $0<T_f<+\infty$ and we analyse the following PDE-ODEs system where $T\left(t,x\right)\geq0$, $N\left(t,x\right)\geq0$ and $\Phi\left(t,x\right)\geq0$ represent the tumor density, necrotic density and vasculature concentration, respectively, at the point $x\in\Omega$ at the time $t\in\left(0,T_f\right)$. In its general form, the model is

\begin{equation}\label{probNoLineal}
\left\{\begin{array}{ccl}
\dfrac{\partial T}{\partial t} -\nabla\cdot\left(\left(\kappa_1\;P\left(\Phi,T\right)+\kappa_0\right)\nabla T\right)& = & f_1\left(T,N,\Phi\right) \\
&&\\
\dfrac{\partial N}{\partial t}& = & f_2\left(T,N,\Phi\right) \\
&&\\
\dfrac{\partial \Phi}{\partial t} & = &f_3\left(T,N,\Phi\right)\\
\end{array}\right.\end{equation}
where $0<\kappa_1,\;\kappa_0\in\mathbb{R}$ are diffusion coefficients. 
%
%
%
The nonlinear reaction functions $f_i\;:\mathbb{R}^3\rightarrow\mathbb{R}$ for $i=1,2,3$ have the following form

\begin{equation}\label{funciones}
\left\{\begin{array}{ccl}
f_1\left(T,N,\Phi\right) &=&\rho\;P\left(\Phi,T\right)T \left(1-\dfrac{T+N+\Phi}{K}\right)-\alpha\;T\;\sqrt{1-P\left(\Phi,T\right)^2}-\beta_1\; 
N\;T,\\
\\
f_2\left(T,N,\Phi\right) &=& \alpha\;T\;\sqrt{1-P\left(\Phi,T\right)^2}+\beta_1\; N\;T +\delta\;T\;\Phi +\beta_2\;N\;\Phi,\\
\\
f_3\left(T,N,\Phi\right) &=& \gamma\;\dfrac{T}{K}\;\sqrt{1-P\left(\Phi,T\right)^2}\;\Phi\left(1-\dfrac{T+N+\Phi}{K}\right)-\delta\;T\;\Phi-\beta_2\;N\;\Phi,\\
\end{array}\right.
\end{equation}
where the $\rho,\alpha,\beta,\delta,\gamma>0$ are reaction coefficients (see Table \ref{Table1}), $K>0$ is the carrying capacity coefficient and
$$P\left(\Phi, T\right)=
\dfrac{\Phi_+}{\Phi_++T_+}\;\; \text{  if  }\; \left(\Phi,T\right)\neq\left(0,0\right)$$
%
with $T_+=\max\{0,T\}$ and the same for $\Phi_+$. Notice that $P\left(\Phi,T\right)$ is the vasculature volume fraction and it has the pointwise estimate 
$$0\leq P\left(\Phi,T\right)\leq1\qquad\forall\left(T,\Phi\right)\in\mathbb{R}^2\backslash\left\{\left(0,0\right)\right\}$$
and $P\left(\Phi,T\right)=0$ for $\Phi=0$ and $P\left(\Phi,T\right)=1$ for $T=0$.
\\

In this paper, we contemplate a simplification of $\left(\ref{probNoLineal}\right)$ vanishing the nonlinear diffusion velocity $\kappa_1 P(\Phi,T)$ (i.e. a linear diffusion will be considered). Moreover, we take $\kappa_0=1$ for simplicity. The complete nonlinear diffusion problem $\left(\ref{probNoLineal}\right)$ will be studied in a forthcoming paper. Hence, we consider the following PDE-ODE system

\begin{equation}\label{probOriginal}
\left\{\begin{array}{ccl}
\dfrac{\partial T}{\partial t} -\;\Delta\;T& = & f_1\left(T,N,\Phi\right) \\
&&\\
\dfrac{\partial N}{\partial t}& = & f_2\left(T,N,\Phi\right) \\
&&\\
\dfrac{\partial \Phi}{\partial t} & = &f_3\left(T,N,\Phi\right)\\
\end{array}\right.\end{equation}
endowed with non tumor flux boundary condition
\begin{equation}\label{condifronte}
\dfrac{\partial T}{\partial n}\Bigg\vert_{\partial\Omega}=0\;\;\text{on}\;\;\left(0,T_f\right)\times\partial\Omega
\end{equation} 
where $n$ is the outward unit normal vector to $\partial\Omega$ and initial conditions

\begin{equation}\label{condinicio}
T\left(0,x\right)=T_0(x),\;N\left(0,x\right)=N_0(x),\;\Phi\left(0,x\right)=\Phi_0(x),
\;\;\;x\in\Omega.
\end{equation}

The parameters $\rho$, $\alpha$, $\beta_1$, $\beta_2$, $\gamma$, $\delta$ and $K$ are given by the following description corresponding to a result of relevant studies \cite{Alicia_2012, Alicia_2015,Klank_2018}:
\begin{table}[H]
	\centering
\begin{tabular}{c|c|c}
\textbf{Variable} & \textbf{Description} & \textbf{Value} \\
\hline
$\rho$	& Tumor proliferation rate  &  $\text{day}^{-1}$    \\
\hline
$\alpha$ & Hypoxic death rate by persistent anoxia & $cell/\text{day}$    \\
\hline
$\beta_1$	& Change rate from tumor to necrosis &   $\text{day}^{-1}$    \\
\hline
$\beta_2$	& Change rate from vasculature to necrosis &   $\text{day}^{-1}$    \\
\hline
$\gamma$	& Vasculature proliferation rate  &  $\text{day}^{-1}$    \\
\hline
$\delta$	& Vasculature destruction by tumor action  &  $\text{day}^{-1}$    \\
\hline
$K$	& Carrying capacity & $\text{cell}/\text{cm}^3$ \\
\end{tabular}
\caption{\label{Table1}Reaction coefficients.}
\end{table}

We are going to describe the biological meaning of the reaction terms:

\begin{itemize}
	\item It has been observed that tumor cells show a random movement when there is no nutrient limitation (which is modelled as a linear diffusion term). 
	
	\item Necrosis has not diffusion movement and it is experimentally known that the necrosis will grow when the tumor does.
	\item Since tumour cells and vasculature must have enough space to proliferate, two logistic growth terms have been included respectively, for tumor and vasculature.
	$$T\left(1-\dfrac{T+N+\Phi}{K}\right)\;\;\text{in}\;\; f_1\left(T,N,\Phi\right)\quad\text{and}\quad \Phi\left(1-\dfrac{T+N+\Phi}{K}\right)\;\;\text{in}\;\; f_3\left(T,N,\Phi\right)$$

	\item Since vasculature supplies nutrients and oxygenation to tumor cells, speed tumor growth depends on the amount of vasculature. Hence, the tumour growth coefficient is given by: $\rho\;P\left(\Phi,T\right)$.
	\item We consider the hypoxia term, $\alpha\;T\;\sqrt{1-P\left(\Phi,T\right)^2}$, that is, a decreasing tumor term due to lack of vasculature which is transformed into necrosis which satisfies that if the ratio of vasculature per cell is healthy, then there will be no hypoxia. Therefore, low vasculature produces more tumor destruction and high vasculature less destruction. In fact, the non-dimensional factor satisfies that
	
	$$\sqrt{1-P\left(\Phi,T\right)^2}=\left\{
	\begin{array}{lcl}
      \text{increasing to }\;1&\text{if}& \Phi\rightarrow0, \\
      \\
      \text{decreasing to }\;0&\text{if}& \Phi\rightarrow +\infty.
	\end{array}
	\right.$$
	
	Despite we have chosen this hypoxia term, other functions with the same behaviour could be contemplated.
	
	\item The vasculature growth coefficient is $\gamma\;\frac{T}{K} \sqrt{1-P\left(\Phi,T\right)^2}$. It depends on the amount of tumor and satisfies two biological conditions:

	\begin{enumerate}
	\item Vasculature can undergo growth when there is a high demand for nutrients by the tumor cells. In particular, where there is not tumor, there is not growth of vasculature.
	\item The vasculature growth term decreases with respect to the amount of vasculature.  
	\end{enumerate}
	
	\item Interaction between tumor (resp. vasculature) with necrosis produces a lost of tumor (resp. vasculature) in function of the necrosis, with the terms:  $\pm\beta_1\;T\;N$ and $\pm\beta_2\;\Phi\;N$.
	
	\item The destruction of vasculature by tumor is transformed into necrosis by the terms: $\pm\delta\;T\;\Phi$.
\end{itemize}

There is an extensive literature devoted to the study of PDE-ODE systems, see for instance \cite{Pang_2018, Anderson_2015, Bitsoumi_2017, Cruz_2018} and the references therein. As far as we know, a great quantity of works related to solve this kind of problems uses generic results of \cite{Amann_1991,Amann_1993}, see for instance \cite{Kubo_2016, Tello_2018}. 
\\

The aim of this paper is to analyse $\left(\ref{probOriginal}\right)$-$\left(\ref{condinicio}\right)$ in a theoretical way. Firstly, we show the existence and uniqueness of global in time classical solution using a fixed point argument. In fact, the fixed point operator is built by computing first the ODE system, and then the nonlinear PDE. One important difficulty here is to obtain classical regularity of solutions with respect to the spatial variable (which is a parameter for the ODE system). Secondly, we study the asymptotic behaviour of solutions of $\left(\ref{probOriginal}\right)$-$\left(\ref{condinicio}\right)$, showing three main results:
\begin{enumerate}
	\item Vasculature goes to zero as time goes to infinity pointwisely in space for any choice of parameters
	\item If the destruction of vasculature by tumor is large regarding to the vasculature growth, specifically if $\delta\geq\dfrac{\gamma}{K}$, then tumor and vasculature goes to zero in an exponential way (uniformly in space) and necrosis is uniformly bounded.
	\item If the destruction of tumor by necrosis dominates to tumor growth, specifically if $\beta_1\gg\rho$ (see hypothesis $\left(\ref{condi}\right)$ below), then tumor and vasculature go to zero in an exponential way (uniformly in space) and necrosis is uniformly bounded.
\end{enumerate}

The paper is organized as follows: In Section $\ref{prelim}$, we present preliminary results which we will use along the paper. In Section $\ref{solocion}$ we prove the existence (and uniqueness) of classical solution of $\left(\ref{probOriginal}\right)$-$\left(\ref{condinicio}\right)$. Section $\ref{comp_asin}$ is dedicated to the long time behaviour of the classical solutions 
and we show some numerical simulations according to the results proved previously. Finally, in Section $\ref{conclusion}$, we discuss our findings and summarize our main results.

\section{Preliminaries}\label{prelim}
Although $P\left(\Phi,T\right)$ is not evaluated  in $\left(0,0\right)$, we can deduce the following

\begin{lem}\label{lemaB}
	 The functions $B:\mathbb{R}^2\rightarrow\mathbb{R}$ and $D:\mathbb{R}^2\rightarrow\mathbb{R}$ given by $$B\left(\Phi,T\right)=T_+\;\sqrt{1-P\left(\Phi,T\right)^2}$$
and $$D\left(\Phi,T\right)=T_+\;\;P\left(\Phi,T\right)$$
 are well defined, continuous and globally lipschitz in $\mathbb{R}^2$.
\end{lem}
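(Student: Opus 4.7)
Since the maps $T \mapsto T_+$ and $\Phi \mapsto \Phi_+$ are $1$-Lipschitz from $\mathbb{R}$ onto $[0,\infty)$, my first move is to reduce everything to the closed first quadrant by setting $a = T_+$, $b = \Phi_+$. A direct manipulation using $P = b/(a+b)$ gives $1 - P^2 = a(a+2b)/(a+b)^2$, so the objects to study become
$$\widetilde D(a,b) := \dfrac{ab}{a+b}, \qquad \widetilde B(a,b) := \dfrac{a^{3/2}\sqrt{a+2b}}{a+b} \qquad \text{for } (a,b) \neq (0,0),$$
extended by $\widetilde D(0,0) = \widetilde B(0,0) = 0$. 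Well-definedness and continuity at the origin follow from the elementary pointwise bounds $0 \le \widetilde D \le \min(a,b)$ and $0 \le \widetilde B \le a$; continuity elsewhere in $[0,\infty)^2$ is immediate from the explicit formulas.

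For the Lipschitz estimate for $D$, I would compute the partial derivatives on the open quadrant: $\partial_a \widetilde D = b^2/(a+b)^2 \in [0,1]$ and $\partial_b \widetilde D = a^2/(a+b)^2 \in [0,1]$. The mean value inequality then yields a Lipschitz bound on $(0,\infty)^2$, which extends to $[0,\infty)^2$ by continuity. Composition with the $1$-Lipschitz maps $T \mapsto T_+$ and $\Phi \mapsto \Phi_+$ gives the stated global Lipschitz property for $D$.

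The argument for $B$ has the same structure but is where the real work lies, since the denominator $(a+b)$ vanishes at the origin and the numerator carries the singular-looking factor $a^{3/2}$. On $(0,\infty)^2$ I would compute
$$\partial_a \widetilde B = \dfrac{\sqrt{a}\,(a^2+3ab+3b^2)}{(a+b)^2 \sqrt{a+2b}}, \qquad \partial_b \widetilde B = -\dfrac{a^{3/2}\,b}{(a+b)^2 \sqrt{a+2b}}.$$
Then, using the polynomial inequality $a^2 + 3ab + 3b^2 \le 3(a+b)^2$ (equivalent to $2a^2 + 3ab \ge 0$) together with $\sqrt{a}/\sqrt{a+2b} \le 1$, I obtain $\partial_a \widetilde B \le 3$. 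Similarly, $a^{3/2}b \le (a+b)^{5/2}$ together with $\sqrt{a+b}/\sqrt{a+2b} \le 1$ gives $|\partial_b \widetilde B| \le 1$. The mean value inequality on $(0,\infty)^2$, continuous extension to $[0,\infty)^2$, and composition with the positive-part maps then finish the proof.

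The main obstacle, as the third paragraph indicates, is precisely the derivative bound for $\widetilde B$: one must see that the polynomial factors in $a$ in the numerator are dominated uniformly by $(a+b)^2$ and $\sqrt{a+2b}$ in the denominator, which is not obvious a priori and hinges on the two elementary polynomial inequalities highlighted above.
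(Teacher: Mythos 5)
Your proof is correct and follows essentially the same route as the paper: reduce to the closed first quadrant and bound the two partial derivatives of $B$ (and $D$) by explicit constants there, then conclude global Lipschitz continuity. You are somewhat more explicit than the paper — you spell out the reduction via the $1$-Lipschitz positive-part maps, the mean value inequality on the convex quadrant, and the computation for $D$ (which the paper omits as ``similar, even easier'') — but the key step, namely the uniform bounds on $\partial_a\widetilde B$ and $\partial_b\widetilde B$ over $(0,\infty)^2$, is the same as the paper's estimates $(\ref{derivadaBrespectoF})$--$(\ref{derivadaBrespectoT})$, up to the values of the constants.
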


\begin{proof}
	We only show the proof for $B\left(\Phi,T\right)$ because for $D\left(\Phi,T\right)$ it is similar, even easier. Since $0\leq P\left(\Phi,T\right)\leq1$, it is clear that $B\left(\Phi,T\right)$ is well defined and continuous in $\mathbb{R}^2$ (in particular, $B(0,0)=0$). 
	 To prove the global lipschitz condition for $B$, it suffices to show that the two partial derivatives of $B\left(\Phi,T\right)$ are continuous and bounded in the subdomain $A=\left\{\left(\Phi,T\right)\in\mathbb{R}^2\;\;:\;\;\Phi,\;T>0\right\}$ (in the rest, is equal to zero). By means of direct calculations, it follows that for any $\left(\Phi,T\right)\in A$,
	
	\begin{equation}\label{derivadaBrespectoF}
\left|\dfrac{\partial B}{\partial \Phi}\right|\leq
\left|\dfrac{1}{2}\dfrac{\sqrt{T}}{\sqrt{T+2\;\Phi}}\right|\leq \dfrac{1}{2},
	\end{equation}
	and
	\begin{equation}\label{derivadaBrespectoT}
	\left|\dfrac{\partial B}{\partial T}\right|\leq
	 \left|1+\dfrac{T}{\sqrt{T}\sqrt{T+2\;\Phi}}\right|\leq 2.
	\end{equation}
\\

Hence, we deduce that $B\left(\Phi,T\right)$ is globally lipschitz in $\mathbb{R}^2$.
    
\end{proof}

%
%
%

As consequence, we get the following result
\begin{lem}\label{lemafi}
	The functions $f_i\;:\mathbb{R}^3\rightarrow\mathbb{R}$ for $i=1,2,3$ defined in $\left(\ref{funciones}\right)$ are  continuous and locally lipschitz in $\mathbb{R}^3$.
\end{lem}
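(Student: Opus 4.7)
My plan is to reduce the statement to Lemma~\ref{lemaB} by rewriting each $f_i$ as a finite sum of products of polynomials in $(T,N,\Phi)$ with the globally Lipschitz auxiliary functions $B(\Phi,T)$ and $D(\Phi,T)$. The only non-polynomial pieces appearing in (\ref{funciones}) are the combinations $T\sqrt{1-P(\Phi,T)^2}$ and $T\,P(\Phi,T)$; for $T\geq 0$ these coincide with $B(\Phi,T)$ and $D(\Phi,T)$, and I would adopt the $B$- and $D$-representations as the definition on the whole of $\mathbb{R}^3$ (this also resolves the ambiguity of $P$ at $(0,0)$). With that agreement one obtains
\begin{align*}
f_1&=\rho\, D(\Phi,T)\Bigl(1-\tfrac{T+N+\Phi}{K}\Bigr)-\alpha\, B(\Phi,T)-\beta_1\, N T,\\
f_2&=\alpha\, B(\Phi,T)+\beta_1\, N T+\delta\, T\Phi+\beta_2\, N\Phi,\\
f_3&=\tfrac{\gamma}{K}\,\Phi\, B(\Phi,T)\Bigl(1-\tfrac{T+N+\Phi}{K}\Bigr)-\delta\, T\Phi-\beta_2\, N\Phi.
\end{align*}

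Next, I would invoke the elementary algebraic fact that the class of continuous, locally Lipschitz functions on $\mathbb{R}^n$ is closed under finite sums and products (for products this follows on any compact $K$ from the standard bound $|f(x)g(x)-f(y)g(y)|\leq \|f\|_{\infty,K}|g(x)-g(y)|+\|g\|_{\infty,K}|f(x)-f(y)|$). Lemma~\ref{lemaB} provides the continuity and even \emph{global} Lipschitzness of $B$ and $D$ on $\mathbb{R}^2$; regarded as functions of $(T,N,\Phi)\in\mathbb{R}^3$ they remain continuous and locally Lipschitz. The remaining factors $1-(T+N+\Phi)/K$, $NT$, $T\Phi$, $N\Phi$, $\Phi$ are polynomials, hence $C^\infty$. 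Assembling these pieces proves the claim for each $f_i$.

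I do not anticipate a substantive obstacle. The only delicate point is making the algebraic substitutions $T\sqrt{1-P^2}\leftrightarrow B(\Phi,T)$ and $TP\leftrightarrow D(\Phi,T)$ consistent across the whole of $\mathbb{R}^3$ (in particular past the singular point $(\Phi,T)=(0,0)$), but this is precisely the purpose of Lemma~\ref{lemaB}. Note that the $f_i$ are only locally (not globally) Lipschitz, owing to the polynomial growth of the logistic and bilinear terms; so it is essential that the conclusion of the lemma be stated in the local form.
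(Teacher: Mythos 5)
Your proposal is correct and follows essentially the same route as the paper: rewrite each $f_i$ in terms of the globally Lipschitz functions $B(\Phi,T)$ and $D(\Phi,T)$ from Lemma~\ref{lemaB} together with polynomial factors, and conclude by closure of locally Lipschitz functions under sums and products. The paper phrases the last step via boundedness of partial derivatives on compact sets rather than the product inequality you quote, but this is the same argument; your explicit remark about extending the identifications $T\sqrt{1-P^2}=B$ and $TP=D$ past $T<0$ makes precise a point the paper leaves implicit.
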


\begin{proof}
	Rewriting the definition of $f_i\left(T,N,\Phi\right)$ for every $i=1,2,3$ according to the functions $B\left(\Phi,T\right)$ and $D\left(\Phi,T\right)$, it is easy to deduce that functions $f_i\left(T,N,\Phi\right)$ are continuous and their partial derivatives are bounded in compact sets of $\mathbb{R}^3$ for every $i=1,2,3$, because they are products and sums of the globally lipschitz functions $B\left(\Phi,T\right)$ and $D\left(\Phi,T\right)$ and polynomials in $(T,N,\Phi)$.
\end{proof}
	
In order to obtain some regularity result, we need to define the following spaces for $p>3$:	

$$W^{2-2/p,p}_n\left(\Omega\right)=\left\{u\in W^{2-2/p,p}\left(\Omega\right)\;:\;\frac{\partial u}{\partial n}=0\;\;\text{on}\;\;\partial\Omega \right\},$$ 

$$V_p=\left\{
\begin{array}{c}

u\in L^p\left(0,T_f;W^{2,p}\left(\Omega\right)\right)\cap \mathcal{C}^0\left(\left[0,T_f\right];W_{n}^{2-2/p,p}\left(\Omega\right)\right)\\
\\
\text{and}\;u_t \in L^p\left(0,T_f;L^p\left(\Omega\right)\right) 
\end{array}\right\}$$
with the norm,
$$\| u\|_{V_p}:=\| u\|_{\mathcal{C}^0([0,T_f];W_{n}^{2-2/p,p}\left(\Omega\right))}+\|\partial_t u\|_{L^p\left(0,T_f;L^p\left(\Omega\right)\right)}+\| u\|_{L^p\left(0,T_f;W^{2,p}\left(\Omega\right)\right)}.$$

The following result follows by \cite[p. 344]{Feireisl_2009}
\begin{lem}\label{w2p}
	Assume $\Omega\in\mathcal{C}^2$, let $p>3$, $u_0\in W^{2-2/p,p}_n\left(\Omega\right)$ and $g\in L^p\left(0,T_f;L^p\left(\Omega\right)\right)$. Then, the problem
	
	$$\left\{\begin{array}{rcl}
	\partial_t u- \Delta u &=& g\qquad\text{in}\;\; \left(0,T_f\right)\times\Omega,\\
	\\
	u(0,\cdot)&=&u_0\qquad\text{in}\;\;\Omega,\\
	\\
	\dfrac{\partial u}{\partial n}&=&0\qquad\text{on}\;\;\left(0,T_f\right)\times\partial\Omega,
	\end{array}\right.$$
	admits a unique solution $u\in V_p$. Moreover, there exists a positive constant $C:=C\left(p,\Omega,T_f\right)$ such that
	$$\| u\|_{V_p}\leq C\left(\| g\|_{L^p\left(0,T_f;L^p\left(\Omega\right)\right)},\;\| u_0\|_{W_{n}^{2-2/p,p}\left(\Omega\right)}\right).$$
\end{lem}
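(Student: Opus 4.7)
The statement is a standard maximal $L^p$-regularity result for the heat equation with homogeneous Neumann boundary data; my plan is to invoke the abstract parabolic theory rather than to reprove it from scratch.

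First, I would recast the problem as an abstract Cauchy problem on $L^p(\Omega)$ by defining $A_N u = -\Delta u$ with domain $D(A_N) = \{u \in W^{2,p}(\Omega) : \partial u/\partial n = 0 \text{ on } \partial\Omega\}$. The hypothesis $\Omega \in \mathcal{C}^2$ ensures, via standard elliptic regularity together with the Agmon-Douglis-Nirenberg estimates, that $-A_N$ is sectorial on $L^p(\Omega)$ and hence generates a bounded analytic semigroup there.

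Next, I would apply the classical maximal $L^p$-regularity theorem (Dore-Venni, or in the form stated in \cite{Feireisl_2009}): for every $g \in L^p(0,T_f;L^p(\Omega))$ and every $u_0$ in the real interpolation space $(L^p(\Omega), D(A_N))_{1-1/p,p}$, the abstract Cauchy problem $\partial_t u + A_N u = g$, $u(0) = u_0$ has a unique solution with $\partial_t u, \Delta u \in L^p(0,T_f;L^p(\Omega))$, together with the corresponding quantitative estimate in those norms.

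The next step is to identify the trace space. Since $p>3$ we have $2-2/p > 1$, so the normal trace $\partial u/\partial n$ makes sense on $W^{2-2/p,p}(\Omega)$, and the standard interpolation identity yields $(L^p(\Omega), D(A_N))_{1-1/p,p} = W^{2-2/p,p}_n(\Omega)$. The continuous embedding of the maximal regularity space into $\mathcal{C}^0([0,T_f]; W^{2-2/p,p}_n(\Omega))$ (the vector-valued trace theorem) then controls the remaining $\mathcal{C}^0$-component of $\|u\|_{V_p}$, and assembling the three bounds gives the estimate stated.

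The main delicate point, were one to write out all the details, is the identification of this real interpolation space together with the verification that the Neumann condition is inherited by it; once this is in place the estimate follows either by the closed graph theorem or directly from the maximal regularity constant. In the present paper the lemma is simply invoked from the cited reference, so no further argument is needed.
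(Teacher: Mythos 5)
The paper offers no proof of this lemma at all: it is stated as a known result and attributed to \cite[p.~344]{Feireisl_2009}, so there is nothing to compare your argument against line by line. Your sketch is, however, a correct account of the standard way this result is established: realise $-\Delta$ with Neumann conditions as a sectorial operator on $L^p(\Omega)$ (here $\Omega\in\mathcal{C}^2$ suffices for the requisite elliptic estimates), invoke maximal $L^p$-regularity for the resulting abstract Cauchy problem, identify the initial-trace space as the real interpolation space $\left(L^p(\Omega),D(A_N)\right)_{1-1/p,p}$, and use the vector-valued trace embedding to control the $\mathcal{C}^0\left(\left[0,T_f\right];W^{2-2/p,p}_n(\Omega)\right)$ component of the $V_p$-norm. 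One small imprecision worth fixing: the condition $2-2/p>1$ (true already for $p>2$) is not what makes the normal trace well defined on $W^{2-2/p,p}(\Omega)$; one needs $2-2/p>1+1/p$, i.e.\ precisely $p>3$, and this is also the threshold at which the Neumann condition is retained in the interpolation space. That is the actual role of the hypothesis $p>3$, and your write-up should state it correctly. Otherwise your route is the expected one, and since the paper itself delegates the proof to the cited reference, your additional outline is a correct (if optional) elaboration rather than a divergence from the paper.
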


	It will be necessary to obtain existence and uniqueness of global in time classical solution for an ordinary differential system depending on parameters. The first result is a classical extension result while the second one provides us the continuous dependence of the solutions of an ODE system with respect to parameters and initial conditions, see \cite{Coddington_1955} for instance. 
\begin{lem}[Continuous extension]\label{lemaExt}
	Let $g\in\mathcal{C}^0\left(\overline{\Omega}\right)$ with $\Omega\subseteq\mathbb{R}^d$ an open bounded set of class $\mathcal{C}^0$ and $d\in\mathbb{N}$. Then, there exists an extension $Ext\left(g\right)\in\mathcal{C}^0\left(\mathbb{R}^d\right)$ such that $Ext\left(g\right)\big\vert_{\overline{\Omega}}=g$.
\end{lem}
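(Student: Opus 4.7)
The statement is the Tietze extension theorem applied to the closed set $\overline{\Omega}\subset\mathbb{R}^d$, and since $\mathbb{R}^d$ is metric (hence normal) the conclusion follows at once. I would, however, give an explicit construction of McShane type adapted to a modulus of continuity, both for self-containment and because it is the concrete formula that will be used later.

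The key preparatory observation is that $\overline{\Omega}$ is compact (being closed and bounded), so $g$ is automatically bounded and uniformly continuous on $\overline{\Omega}$, and therefore admits a modulus of continuity $\omega:[0,\infty)\to[0,\infty)$ that is nondecreasing, continuous at $0$ with $\omega(0)=0$, and, after replacing it by its least concave majorant, subadditive. With such $\omega$ in hand I would define
\begin{equation*}
Ext(g)(x):=\inf_{y\in\overline{\Omega}}\bigl\{\,g(y)+\omega(|x-y|)\,\bigr\},\qquad x\in\mathbb{R}^d.
\end{equation*}
The infimum is finite because $g$ is bounded below on $\overline{\Omega}$, and by continuity of $y\mapsto g(y)+\omega(|x-y|)$ on a compact set it is actually attained.

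Two properties then have to be checked. For $x\in\overline{\Omega}$, taking $y=x$ gives $Ext(g)(x)\leq g(x)$, while for every $y\in\overline{\Omega}$ the modulus inequality $g(x)\leq g(y)+|g(x)-g(y)|\leq g(y)+\omega(|x-y|)$ yields the reverse bound, so $Ext(g)\equiv g$ on $\overline{\Omega}$. For continuity on $\mathbb{R}^d$, I would fix $x_1,x_2\in\mathbb{R}^d$ and use the triangle inequality together with monotonicity and subadditivity of $\omega$ to estimate $\omega(|x_1-y|)\leq\omega(|x_1-x_2|)+\omega(|x_2-y|)$; taking infimum over $y\in\overline{\Omega}$ and exchanging the roles of $x_1,x_2$ gives $|Ext(g)(x_1)-Ext(g)(x_2)|\leq\omega(|x_1-x_2|)$, so $Ext(g)$ in fact inherits $\omega$ as a modulus of continuity.

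The only mildly nontrivial point is producing a subadditive modulus for a merely continuous function on a compact set. The raw candidate $\omega_0(t)=\sup\{|g(y_1)-g(y_2)|:y_1,y_2\in\overline{\Omega},\,|y_1-y_2|\leq t\}$ need not be subadditive, but its least concave majorant $\tilde\omega$ is concave, satisfies $\tilde\omega(0)=0$, and is therefore subadditive and still a modulus of continuity for $g$. Once this standard step is taken, the construction above completes the proof; no $\mathcal{C}^0$ regularity of $\partial\Omega$ is actually used.
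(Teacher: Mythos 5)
Your proof is correct. Note that the paper does not actually prove this lemma: it is stated as a ``classical extension result'' with a pointer to the standard literature, the intended reference being the Tietze extension theorem for the closed set $\overline{\Omega}\subset\mathbb{R}^d$. Your McShane-type construction $Ext(g)(x)=\inf_{y\in\overline{\Omega}}\{g(y)+\omega(|x-y|)\}$ is a legitimate self-contained alternative, and all the steps check out: $\overline{\Omega}$ is compact, so $g$ is bounded and uniformly continuous; the least concave majorant of the raw modulus is finite (the raw modulus is bounded by $2\|g\|_{\infty}$), vanishes continuously at $0$, and is subadditive by concavity; the identity $Ext(g)=g$ on $\overline{\Omega}$ and the estimate $|Ext(g)(x_1)-Ext(g)(x_2)|\leq\omega(|x_1-x_2|)$ follow exactly as you write. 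Compared with simply invoking Tietze, your argument buys a little more — the extension inherits the modulus of continuity of $g$, and it makes explicit that only closedness of $\overline{\Omega}$ is used, so the $\mathcal{C}^0$ regularity of $\partial\Omega$ in the hypothesis is indeed superfluous — at the cost of the small technical detour through the concave majorant. Either route is acceptable for the purposes of the paper.
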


\begin{teo}[Continuous dependence of ODEs with respect to parameters and initial data]\label{parameter}
	Let $U\subset\mathbb{R}\times\mathbb{R}^N\times\mathbb{R}^M$ an open set and $F\;:\;U\rightarrow\mathbb{R}^N$ a continuous map such that, for any parameter $\lambda\in\mathbb{R}^M$ and for any initial data $y_0\left(\lambda\right)\in\mathbb{R}^N$ such that $\left(0,y_0\left(\lambda\right),\lambda\right)\in U$, the Cauchy's problem
	
	$$\left\{ \begin{array}{l}
	
	y'\left(t\right)=F\left(t,y,\lambda\right)\\
	\\
	y\left(0\right)=y_0\left(\lambda\right)
	\end{array}\right.$$
	has a unique maximal solution $\phi\left(\cdot;y_0\left(\lambda\right),\lambda\right)\;:\;I_{\left(y_0\left(\lambda\right),\lambda\right)}\rightarrow\mathbb{R}^N$ being $I_{(y_0(\lambda),\lambda)}$ an open interval. Then,
	$$\Theta=\left\{\left(t;y_0\left(\lambda\right),\lambda\right)\in\mathbb{R}\times\mathbb{R}^N\times\mathbb{R}^M\;:\:\left(t,y_0\left(\lambda\right),\lambda\right)\in U\text{  and  }t\in I_{\left(y_0\left(\lambda\right),\lambda\right)}\right\}$$
	is an open set and the map $\phi\left(\cdot;\cdot,\cdot\right)$ is continuous from $\Theta$ to $\mathbb{R}^N$.
\end{teo}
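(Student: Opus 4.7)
The plan is to establish openness of $\Theta$ and continuity of the flow $\phi$ separately, using a compactness-plus-uniqueness argument. First I would recast the Cauchy problem as the integral equation $\phi(t;y_0,\lambda)=y_0+\int_0^t F(s,\phi(s;y_0,\lambda),\lambda)\,ds$, which will be the main tool both for a priori estimates and for passing to the limit under weak regularity of $F$.

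\emph{Step 1 (openness of $\Theta$).} Fix $(t_0;y_0^*,\lambda^*)\in\Theta$ and write $\phi^*:=\phi(\cdot;y_0^*,\lambda^*)$. Since $I_{(y_0^*,\lambda^*)}$ is open, I can choose $\delta>0$ with $[0,t_0+\delta]\subset I_{(y_0^*,\lambda^*)}$. The graph $\{(s,\phi^*(s),\lambda^*):s\in[0,t_0+\delta]\}$ is a compact subset of $U$, so it admits a compact tubular neighborhood $K\subset U$ on which the continuous function $F$ is bounded by some $M>0$. For $(y_0,\lambda)$ sufficiently close to $(y_0^*,\lambda^*)$, Peano's existence theorem plus the uniform bound $M$ and a standard continuation argument yield a solution $\phi(\cdot;y_0,\lambda)$ defined on $[0,t_0+\delta]$ whose graph stays inside $K$. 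Hence $(t_0;y_0,\lambda)\in\Theta$, and $\Theta$ is open.

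\emph{Step 2 (continuity of $\phi$).} Take an arbitrary sequence $(t_n;y_0^n,\lambda^n)\to(t_0;y_0^*,\lambda^*)$ in $\Theta$. For $n$ large, Step 1 shows that $\phi(\cdot;y_0^n,\lambda^n)$ is defined on the common interval $[0,t_0+\delta]$ and its graph lies in $K$; in particular the family is uniformly bounded, and the integral representation combined with the bound $M$ makes it equicontinuous. By Arzelà--Ascoli a subsequence converges uniformly on $[0,t_0+\delta]$ to some continuous $\psi$. Passing to the limit in the integral equation, which is legitimate because $F$ is continuous and the convergence is uniform, identifies $\psi$ as a solution of the Cauchy problem with data $y_0^*$ and parameter $\lambda^*$; the uniqueness hypothesis then forces $\psi=\phi^*$ on $[0,t_0+\delta]$. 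In particular $\phi(t_n;y_0^n,\lambda^n)\to\phi^*(t_0)$, and since the argument applies to every subsequence the full sequence converges, giving continuity of $\phi(\cdot;\cdot,\cdot)$ on $\Theta$.

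The main obstacle is the absence of any Lipschitz condition on $F$ in the $y$ variable, which rules out the usual Gronwall-type proof of continuous dependence. The substitute is the compactness-and-uniqueness scheme above: mere continuity of $F$ together with uniform boundedness on the compact tube $K$ already forces equicontinuity of the approximating solutions via the integral equation, and the assumed uniqueness of maximal solutions is precisely what identifies every Arzelà--Ascoli cluster point with $\phi^*$, upgrading subsequential compactness to genuine convergence.
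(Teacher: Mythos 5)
The paper does not prove this theorem at all: it is quoted as a classical result with a pointer to Coddington--Levinson, so there is no in-paper argument to compare against. Your strategy is the standard one for that classical proof (Kamke-type compactness plus uniqueness), and Step 2 is essentially right, but as written there is a genuine gap in Step 1 on which Step 2 then relies.

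The gap is the assertion that, for $(y_0,\lambda)$ close to $(y_0^*,\lambda^*)$, ``Peano's existence theorem plus the uniform bound $M$ and a standard continuation argument yield a solution defined on $[0,t_0+\delta]$ whose graph stays inside $K$.'' Continuation only guarantees that the solution can be prolonged for as long as its graph remains in a compact subset of $U$; it gives no reason why the graph should not exit the tube $K$ sideways (in the $y$ direction) at some time $\tau<t_0+\delta$, after which you control nothing. Ruling out such an early exit is exactly the content of continuous dependence, and without a Lipschitz/Gronwall estimate the only available mechanism is the uniqueness hypothesis --- which you invoke only in Step 2, where you already assume the graphs lie in $K$. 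The standard repair is to define $\tau_n$ as the first exit time of the graph of $\phi(\cdot;y_0^n,\lambda^n)$ from $K$, note that on $[0,\tau_n]$ the family is uniformly bounded and equicontinuous (by the bound $M$ on $K$), and argue by contradiction: if $\tau_{n_k}\to\tau^*\leq t_0+\delta$ with exit occurring, Arzel\`a--Ascoli plus passage to the limit in the integral equation produces a solution with data $(y_0^*,\lambda^*)$ whose graph touches $\partial K$ at time $\tau^*$, contradicting uniqueness since $\phi^*$ stays in the interior of the tube. Once this is done, openness and the convergence argument of your Step 2 follow as you describe. Two minor points: you should also treat negative times, since $I_{(y_0(\lambda),\lambda)}$ is an open interval around $0$ (the backward direction is symmetric), and in Step 2 the conclusion $\phi(t_n;y_0^n,\lambda^n)\to\phi^*(t_0)$ uses uniform convergence together with continuity of $\phi^*$ at $t_0$, which is worth saying explicitly.
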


Finally, we will use this classical fixed point theorem.

\begin{teo}[\textit{Leray-Schauder's} theorem] \label{Leray}
	Let $V$ a Banach space, $\lambda\in\left[0,1\right]$ and $\mathcal{R}:V\rightarrow V$ a continuous and compact map such that for every $v\in V$ with $v=\lambda\;\mathcal{R}(v)$, it holds $\|v\|_V\leq \mathcal{C}$ with $\mathcal{C}>0$ independent of $\lambda\in\left[0,1\right]$. 
	Then, there exists a fixed point $v$ of $\mathcal{R}$. 
\end{teo}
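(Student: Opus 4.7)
The plan is to reduce the assertion to Schauder's fixed point theorem by truncating $\mathcal{R}$ to a sufficiently large closed ball. Choose any $R>\mathcal{C}$ and let $\pi: V \to \overline{B_R(0)}$ be the radial retraction defined by $\pi(v)=v$ when $\|v\|_V\leq R$ and $\pi(v)=Rv/\|v\|_V$ otherwise; this map is continuous (indeed $1$-Lipschitz on $V$). I then set $\widetilde{\mathcal{R}}:=\pi\circ \mathcal{R}:\overline{B_R(0)}\to \overline{B_R(0)}$ and aim to apply Schauder to $\widetilde{\mathcal{R}}$.

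The first task is to verify the hypotheses of Schauder's theorem on the closed, bounded, convex set $\overline{B_R(0)}$. Continuity of $\widetilde{\mathcal{R}}$ follows from the continuity of $\pi$ and $\mathcal{R}$. For the compactness, the assumption that $\mathcal{R}$ sends bounded sets to relatively compact sets makes $\mathcal{R}(\overline{B_R(0)})$ precompact in $V$, and then $\pi(\mathcal{R}(\overline{B_R(0)}))$ is also precompact as the continuous image of a precompact set. Schauder's theorem then yields a point $v^*\in\overline{B_R(0)}$ with $v^*=\widetilde{\mathcal{R}}(v^*)=\pi(\mathcal{R}(v^*))$.

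The decisive step, which I expect to be the main obstacle, is to upgrade $v^*$ to an actual fixed point of $\mathcal{R}$; this is where the a priori hypothesis enters. Suppose by contradiction that $\|\mathcal{R}(v^*)\|_V>R$. Then $\pi(\mathcal{R}(v^*))=\lambda\, \mathcal{R}(v^*)$ with $\lambda:=R/\|\mathcal{R}(v^*)\|_V\in(0,1)$, so $v^*=\lambda\, \mathcal{R}(v^*)$, and the hypothesis of the theorem forces $\|v^*\|_V\leq\mathcal{C}$. But on the other hand $\|v^*\|_V=\|\pi(\mathcal{R}(v^*))\|_V=R>\mathcal{C}$, a contradiction. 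Hence $\|\mathcal{R}(v^*)\|_V\leq R$, which gives $\pi(\mathcal{R}(v^*))=\mathcal{R}(v^*)$ and therefore $v^*=\mathcal{R}(v^*)$. The only real subtlety is this dichotomy: the strict choice $R>\mathcal{C}$ is precisely what prevents a solution with $\lambda<1$ from sitting on the boundary of $\overline{B_R(0)}$ and spoiling the argument, while the continuity and compactness verification and the application of Schauder itself are routine.
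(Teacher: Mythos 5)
The paper does not prove this statement at all: Theorem \ref{Leray} is quoted as a classical result (Schaefer's form of the Leray--Schauder principle) and is used as a black box in Section \ref{solocion}, so there is no in-paper argument to compare against. Your proof is the standard and correct derivation from Schauder's fixed point theorem: truncate $\mathcal{R}$ by the radial retraction onto $\overline{B_R(0)}$ with $R>\mathcal{C}$, check that $\pi\circ\mathcal{R}$ is a continuous map of the closed convex bounded set $\overline{B_R(0)}$ into itself with relatively compact image, extract a fixed point $v^*$ of $\pi\circ\mathcal{R}$, and rule out $\|\mathcal{R}(v^*)\|_V>R$ because that would produce a solution of $v^*=\lambda\,\mathcal{R}(v^*)$ with $\lambda\in(0,1)$ sitting at norm $R>\mathcal{C}$, contradicting the a priori bound. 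One small inaccuracy: in a general Banach space the radial retraction is \emph{not} $1$-Lipschitz (its Lipschitz constant can be as large as $2$; the value $1$ holds in Hilbert spaces). This does not damage the argument, since only continuity of $\pi$ is used, but you should either drop the parenthetical claim or weaken it to ``$2$-Lipschitz, hence continuous.''
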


\section{Existence and uniqueness of Classical Solution of Problem $\boldsymbol{\left(\ref{probOriginal}\right)$-$\left(\ref{condinicio}\right)}$}\label{solocion}

First of all, by biological considerations, we assume along the paper the following assumption on the initial data
\begin{equation}\label{hipotesis0}
0\leq T_0(x),N_0(x),\Phi_0(x)\leq K\; \text{  in  } \;\Omega.
\end{equation}

Now, we define the concept of classical solution of $\left(\ref{probOriginal}\right)$-$\left(\ref{condinicio}\right)$.

\begin{defi}\label{Defi1}(Classical solution of $\left(\ref{probOriginal}\right)$-$\left(\ref{condinicio}\right)$)
%
Given $T_0\in W^{2-2/p,p}\left(\Omega\right)$ for some $p>3$ and $N_0,\;\Phi_0\in\mathcal{C}^0\left(\overline{\Omega}\right)$, then $\left(T,N,\Phi\right)$ is called a classical solution of $\left(\ref{probOriginal}\right)$-$\left(\ref{condinicio}\right)$ if: 

\begin{enumerate}[i)]
	\item $T\in V_p$, $N,\Phi\in{C}^1\left(\left[0,T_f\right];\mathcal{C}^0\left(\overline{\Omega}\right)\right)$,

		\item \begin{itemize}
			\item $\displaystyle T_t-\Delta\;T= f_1\left(T,N,\Phi\right)$ a.e. in $\left(0,T_f\right)\times\Omega$,
			\vspace{0.5cm}
			\item $\left(\begin{array}{c}
				\dfrac{\partial N}{\partial t}\\
				\\
				\dfrac{\partial \Phi}{\partial t}
			\end{array}\right) = \left(\begin{array}{c}
				f_2\left(T,N,\Phi\right)\\
				\\
				\\
				f_3\left(T,N,\Phi\right)
			\end{array}\right)\quad\forall\;\left(t,x\right)\in\left[0,T_f\right]\times\overline{\Omega}$ .
		
		\end{itemize}
	
	\item $\left(T,N,\Phi\right)$ satisfies the boundary and the initial conditions given in $\left(\ref{condifronte}\right)$ and $\left(\ref{condinicio}\right)$ respectively.
	\end{enumerate}
     
\end{defi}

\begin{teo}\label{unicidad}
	If there exists a classical solution of $\left(\ref{probOriginal}\right)$-$\left(\ref{condinicio}\right)$, then, it is unique.
\end{teo}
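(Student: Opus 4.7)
The plan is a standard energy/Gronwall argument exploiting the local Lipschitz regularity of the reaction terms proved in Lemma \ref{lemafi}. Suppose $(T_1,N_1,\Phi_1)$ and $(T_2,N_2,\Phi_2)$ are two classical solutions in the sense of Definition \ref{Defi1} with the same initial data. I would set $T=T_1-T_2$, $N=N_1-N_2$, $\Phi=\Phi_1-\Phi_2$, subtract the equations, and aim to show the quantity
$$E(t):=\|T(t)\|_{L^2(\Omega)}^2+\|N(t)\|_{L^2(\Omega)}^2+\|\Phi(t)\|_{L^2(\Omega)}^2$$
satisfies $E(0)=0$ and a differential inequality $E'(t)\le C\,E(t)$, whence $E\equiv0$ by Gronwall.

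The first step is to note that both solutions are uniformly bounded on $[0,T_f]\times\overline{\Omega}$: since $T_i\in V_p$ with $p>3$, the embedding $W^{2-2/p,p}(\Omega)\hookrightarrow\mathcal{C}^0(\overline{\Omega})$ (valid because $2-2/p>3/p$ in dimension $3$) gives $T_i\in\mathcal{C}^0([0,T_f]\times\overline{\Omega})$, while $N_i,\Phi_i\in\mathcal{C}^1([0,T_f];\mathcal{C}^0(\overline{\Omega}))$ are continuous on the compact $[0,T_f]\times\overline{\Omega}$. Hence the ranges of the two solutions lie in a common compact subset $\mathcal{K}\subset\mathbb{R}^3$, and by Lemma \ref{lemafi} there is a Lipschitz constant $L=L(\mathcal{K})$ such that
$$|f_i(T_1,N_1,\Phi_1)-f_i(T_2,N_2,\Phi_2)|\le L\bigl(|T|+|N|+|\Phi|\bigr),\qquad i=1,2,3,$$
pointwise in $[0,T_f]\times\overline{\Omega}$.

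Next, test the parabolic equation for $T$ with $T$ itself, integrate by parts using the homogeneous Neumann condition, and drop the nonnegative Dirichlet energy $\|\nabla T\|_{L^2}^2$. The two ODEs for $N$ and $\Phi$ are multiplied pointwise by $N$ and $\Phi$ respectively and integrated over $\Omega$. Using the Lipschitz bound above together with Cauchy-Schwarz (or just Young's inequality) yields, after adding the three inequalities,
$$\tfrac{1}{2}E'(t)\le C\,E(t),\qquad t\in[0,T_f],$$
for a constant $C$ depending only on $L$ and $|\Omega|$. Since $E(0)=0$, Gronwall's lemma forces $E\equiv 0$ on $[0,T_f]$, giving $T_1=T_2$, $N_1=N_2$, $\Phi_1=\Phi_2$.

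The only subtlety I see is the a priori $L^\infty$ bound needed to pass from local to usable Lipschitz estimates; once the regularity built into the definition of classical solution is unpacked via the Sobolev embedding and the continuity of $N,\Phi$ on the compact cylinder, the argument is routine. No pointwise singularity of $P(\Phi,T)$ at the origin intervenes, because the reaction is written through $B$ and $D$ of Lemma \ref{lemaB} and therefore the $f_i$ are genuinely locally Lipschitz on all of $\mathbb{R}^3$.
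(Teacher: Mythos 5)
Your argument is correct and is essentially identical to the paper's own proof: both form the difference system, use the pointwise boundedness of classical solutions to invoke the local Lipschitz property of the $f_i$ on a common compact set, test with $T$, $N$, $\Phi$ respectively, and close with Gronwall on $\|T\|_{L^2}^2+\|N\|_{L^2}^2+\|\Phi\|_{L^2}^2$. Your remark justifying the uniform bound via the embedding $W^{2-2/p,p}(\Omega)\hookrightarrow\mathcal{C}^0(\overline{\Omega})$ is a welcome extra detail that the paper leaves implicit.
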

\begin{proof}
	Let $\left(T_1,N_1,\Phi_1\right)$ and $\left(T_2,N_2,\Phi_2\right)$ two possible classical solutions  of $\left(\ref{probOriginal}\right)$-$\left(\ref{condinicio}\right)$. Since the two solutions are classical solutions, fixed a final time $0<T_f<+\infty$, we have that $\left(T_1,N_1,\Phi_1\right)$ and $\left(T_2,N_2,\Phi_2\right)$ are bounded pointwise. Then, the graphs $\left(T_i\left(t,x\right),N_i\left(t,x\right),\Phi_i\left(t,x\right)\right)$ for any $(t,x)\in [0,T_f]\times \overline\Omega$ are bounds for $i = 1, 2$ and therefore the union of both graphs is contained in a compact $\cal K$ of $\mathbb{R}^3$. We consider the problem which satisfies the difference $T=T_1-T_2$, $N=N_1-N_2$, $\Phi=\Phi_1-\Phi_2$,

	\begin{equation}\label{Unicidad}
	\left\{\begin{array}{ccl}
	\dfrac{\partial T}{\partial t}-\Delta\;T& = & f_1\left(T_1,N_1,\Phi_1\right)- f_1\left(T_2,N_2,\Phi_2\right) \\
	&&\\
	\dfrac{\partial N}{\partial t}& = & f_2\left(T_1,N_1,\Phi_1\right)- f_2\left(T_2,N_2,\Phi_2\right) \\
	&&\\
	\dfrac{\partial \Phi}{\partial t}& = & f_3\left(T_1,N_1,\Phi_1\right)- f_3\left(T_2,N_2,\Phi_2\right)
	\\
	
	\end{array}\right.\end{equation}
	with non-flux boundary condition and zero initial data
	
	$$
	\dfrac{\partial T}{\partial \text{n}}\Bigg\vert_{\partial\Omega}=0,\label{condifronteraunicidad}\\
	\nonumber\quad
	T\Big\vert_{t=0}=N\Big\vert_{t=0}=\Phi\Big\vert_{t=0}=0\label{condiniciounicidad}.
	$$
	
	It is sufficient to prove that $\left(T,N,\Phi\right)\equiv\left(0,0,0\right)$.
	Multiplying the first equation of $\left(\ref{Unicidad}\right)$ by $T$ and integrating in $\Omega$, we obtain
	
	\begin{equation}\label{uniT}
	\begin{array}{c}
	\displaystyle\dfrac{1}{2}\;\dfrac{d}{dt}\int_{\Omega}T^2\;dx+\int_{\Omega}\Big|\nabla T\Big|^2\;dx=\int_{\Omega}\Big| \left(f_1\left(T_1,N_1,\Phi_1\right)- f_1\left(T_2,N_2,\Phi_2\right)\right)\;T\Big|\;dx\\
	\\
	\displaystyle\leq C_1\left(\int_{\Omega}\left(T^2+\big|N\big|\;\big|T\big|+\big|\Phi\big|\;\big|T\big|\right)\;dx\right)\leq C_1\left(\int_{\Omega}\left(T^2+N^2+\Phi^2\right)\;dx\right)
	\end{array}
	\end{equation}
	because $f_1\left(T,N,\Phi\right)$ is  locally lipschitz in $\mathbb{R}^3$ and $\left(T_i,N_i,\Phi_i\right)\left(t,x\right)$ is bounded in $\mathbb{R}^3$ for $i=1,2$. We repeat the same argument for the second and third equations, multiplying by $N$ and $\Phi$, respectively. 
	\\

	We conclude that, 
	
	\begin{equation}\label{uniSuma}
	\dfrac{1}{2}\;\dfrac{d}{dt}\int_{\Omega}\left(T^2+N^2+\Phi^2\right)dx+\int_{\Omega}\Big|\nabla T\Big|^2\;dx\leq C\int_{\Omega}\left(T^2+N^2+\Phi^2\right).
	\end{equation}

	Consequently, $T,\;N,\;\Phi\equiv0$.

\end{proof}

In order to obtain existence of solution for the system $\left(\ref{probOriginal}\right)$-$\left(\ref{condinicio}\right)$, we define the following truncated system of $\left(\ref{probOriginal}\right)$:

\begin{equation}\label{problin}
\left\{\begin{array}{ccl}
\dfrac{\partial T}{\partial t} -\Delta\;T& = & f_1\left(T_+,N_+,\Phi_+\right),\\
&&\\
\dfrac{\partial N}{\partial t} & = &  f_2\left(T_+^{K},N_+,\Phi_+\right),\\
&&\\
\dfrac{\partial \Phi}{\partial t} & = & f_3\left(T_+^{K},N_+,\Phi_+\right),\\
\end{array}\right.
\end{equation}
\\
endowed with the boundary and initial conditions given in $\left(\ref{condifronte}\right)$ and $\left(\ref{condinicio}\right)$ where $T_+^{K}=\min\left\{K,\max\left\{T,0\right\}\right\}$.
\\
%

Once we prove the existence of classical solution of the problem $\left(\ref{problin}\right)$ and its positivity, we will deduce in fact that this solution is also a classical solution of $\left(\ref{probOriginal}\right)$-$\left(\ref{condinicio}\right)$. 
\\


Before studying the existence of classical solution of $\left(\ref{problin}\right)$, we prove a priori estimates for any possible classical solution. 
\\

\begin{lem}[Pointwise a priori estimates.]\label{estimaciones}
	Under assumptions of Definition $\ref{Defi1}$, any classical solution $\left(T,N,\Phi\right)$ of the truncated system $\left(\ref{problin}\right)$ with initial data verifying $\left(\ref{hipotesis0}\right)$ satisfies the following pointwise bounds
	\begin{equation}\label{cotas}
	\left\{\begin{array}{lr}\
	0\leq T\leq K,&\text{ a.e. }\left(t,x\right)\in\left(0,T_f\right)\times\Omega,\\
	\\
	0\leq N\leq C\left(T_{f}\right),&\forall\left(t,x\right)\in\left[0,T_f\right]\times\overline{\Omega},\\
	\\
	0\leq \Phi\leq K,&\forall\left(t,x\right)\in\left[0,T_f\right]\times\overline{\Omega},
	\end{array} \right.\end{equation}
	where $C\left(T_f\right)$ is a positive constant depending exponentially on the final time $T_f$, which we will define below in $\left(\ref{*}\right)$.
\end{lem}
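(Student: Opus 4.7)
The plan is to establish the six bounds in three stages: first non-negativity of all three variables, then the upper bounds $T\le K$ and $\Phi\le K$, and finally the Gronwall-type bound on $N$, which needs the previous estimates in place. The common thread is that the very truncations appearing in $(\ref{problin})$ were inserted so that each $f_i$ has the right sign (or vanishes) at the boundary of the region $[0,K]\times[0,\infty)\times[0,K]$; in particular every term of $f_1(T_+,N_+,\Phi_+)$ carries a factor $T_+$, every term of $f_3(T_+^K,N_+,\Phi_+)$ carries a factor $\Phi_+$, and $f_2(T_+^K,N_+,\Phi_+)$ is non-negative as a whole.

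For the non-negativity of $T$ I would test the $T$-equation against the negative part $\max\{-T,0\}$ and integrate over $\Omega$. Because $f_1(T_+,N_+,\Phi_+)$ vanishes identically on $\{T\le 0\}$ (every summand contains a $T_+$ factor), and because the Neumann condition allows the usual integration by parts to produce $\int_\Omega|\nabla\max\{-T,0\}|^2$, one obtains
\begin{equation*}
\frac{1}{2}\frac{d}{dt}\int_\Omega\bigl(\max\{-T,0\}\bigr)^2dx+\int_\Omega|\nabla \max\{-T,0\}|^2 dx\le 0,
\end{equation*}
so $\max\{-T,0\}\equiv 0$ since the initial datum is non-negative. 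For $N$ and $\Phi$ I view the second and third equations as ODEs in $t$ pointwise in $x\in\overline\Omega$: since $f_2|_{N=0}=\alpha T_+^K\sqrt{1-P^2}+\delta T_+^K\Phi_+\ge 0$ and $f_3|_{\Phi=0}=0$ (again because each term of $f_3$ contains a $\Phi_+$ factor), the standard invariance of the positive half-line for a Lipschitz ODE (Lemma \ref{lemafi}) gives $N,\Phi\ge 0$.

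For the upper bound $T\le K$, I would test the first equation against $\max\{T-K,0\}$. On $\{T\ge K\}$ one has $T_+=T>0$, $N_+,\Phi_+\ge 0$, hence $1-(T+N_++\Phi_+)/K\le 0$, so the logistic term in $f_1$ is non-positive, as are the hypoxia and necrosis–tumor terms; integration by parts and Gronwall then give $\max\{T-K,0\}\equiv 0$. The bound $\Phi\le K$ is proved pointwise in $x$: at any $(t,x)$ with $\Phi(t,x)=K$, the logistic factor $1-(T_+^K+N_++K)/K=-(T_+^K+N_+)/K$ is $\le 0$, and $-\delta T_+^K\Phi-\beta_2 N_+\Phi\le 0$, so $f_3\le 0$ there and the invariance of $(-\infty,K]$ for the ODE (combined with $\Phi_0\le K$) yields $\Phi\le K$.

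Finally, with $T_+^K\le K$ automatic and $\Phi_+=\Phi\le K$ now known (plus $N=N_+$), the second equation of $(\ref{problin})$ gives the pointwise differential inequality
\begin{equation*}
\frac{\partial N}{\partial t}=f_2(T_+^K,N,\Phi)\le K(\alpha+\delta K)+K(\beta_1+\beta_2)\,N,
\end{equation*}
and Gronwall applied for each fixed $x$, together with $N(0,x)\le K$, produces the advertised exponential-in-$T_f$ bound
\begin{equation*}
0\le N(t,x)\le\Bigl(K+\tfrac{\alpha+\delta K}{\beta_1+\beta_2}\Bigr)e^{K(\beta_1+\beta_2)T_f}=:C(T_f),
\end{equation*}
which is what $(\ref{*})$ will formalize. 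The only real subtlety is the ordering: the $N$-estimate uses $\Phi\le K$, which in turn requires $N\ge 0$ and $\Phi\ge 0$, so the steps must be carried out in the sequence above. Beyond that, the arguments are routine maximum-principle and ODE-invariance calculations, with the Lipschitz property from Lemma \ref{lemafi} justifying the pointwise ODE reasoning.
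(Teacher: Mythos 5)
Your proposal is correct and follows essentially the same route as the paper: testing the $T$-equation against the negative part and against $(T-K)_+$ (using that every term of $f_1(T_+,N_+,\Phi_+)$ carries a factor $T_+$ and that $f_1\le 0$ on $\{T\ge K\}$), exploiting the sign conditions $f_3|_{\Phi=0}=0$, $f_2\ge 0$ and $f_3|_{\Phi= K}\le 0$ for the ODE components, and closing with the linear differential inequality and Gronwall to get the exponential bound $(\ref{*})$ for $N$. The only cosmetic difference is that you phrase the $N$, $\Phi$ steps as pointwise ODE invariance while the paper multiplies by $N_-$, $\Phi_-$ and $(\Phi-K)_+$; the sign facts used are identical.
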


\begin{proof}
	
Let $\left(T,N,\Phi\right)$ be a classical solution of $\left(\ref{problin}\right)$.
%
%
%
%
%
Multiplying the first equation of $\left(\ref{problin}\right)$ by $T_-=\min\left\{T,0\right\}$ and integrating in $\Omega$, if we rewrite $f_1(T_+,N_+,\Phi_+)=T_+\; \widetilde{f}_1(T_+,N_+,\Phi_+)$, we get

$$\dfrac{1}{2}\dfrac{d}{dt}\int_{\Omega}(T_-)^2\;dx+\int_{\Omega}\mid\nabla T_-\mid^2\;dx=
\int_{\Omega}T_-\;T_+\;\widetilde{f}_1\left(T_+,N_+,\Phi_+\right)\;dx=0,\;\;\text{ a.e. }t\in\left(0,T_f\right).$$
%
%
%

Hence, since $T_-\left(0,x\right)=0$, we get $T_{-}\left(t,x\right)=0$ a.e. $\left(t,x\right)\in\left(0,T_f\right)\times\Omega$.
%
We can repeat the same argument for the other two equations of $\left(\ref{problin}\right)$, using that 

$$\Phi_-\;f_3\left(T_+^{K},N_+,\Phi_+\right)=0\;\;\text{and}\;\;N_-\;f_2\left(T_+^{K},N_+,\Phi_+\right)\leq0.$$ 

To obtain the upper bounds of $\left(\ref{cotas}\right)$, we multiply the first equation of $\left(\ref{problin}\right)$ by $\left(T-K\right)_+=\max\left\{0,T-K\right\}$ and integrate in $\Omega$,
\\
$$\dfrac{1}{2}\dfrac{d}{dt}\int_{\Omega}\left(\left(T-K\right)_+\right)^2\;dx+\int_{\Omega}\mid\nabla \left(T-K\right)_+\mid^2\;dx=$$
$$=\int_{\Omega}f_1\left(T_+,N_+,\Phi_+\right)\left(T-K\right)_+\;dx\leq0,\;\;\text{ a.e.}\;\;t\in\left(0,T_f\right)$$
\\
%
%
%
%
%
%
where in the last inequality we have used $f_1\left(T_+,N_+,\Phi_+\right)\leq \rho\;T_+\left(1-\dfrac{T_+}{K}\right)$. 
\\

Hence, since $\left(T\left(0,x\right)-K\right)_+=0$, then $\left(T\left(t,x\right)-K\right)_+=0$ a.e. $\left(t,x\right)\in\left(0,T_f\right)\times\Omega$. We repeat the same argument for the third equation of $\left(\ref{problin}\right)$ using that $\left(\Phi-K\right)_+\;f_3\left(T_+^{K},N_+\Phi_+\right)\leq0$.
\\
%
%
%
%
%

Finally, given a fixed final time $T_f>0$, 
for any $t\leq T_f$ and $x\in\overline{\Omega}$, we have

$$\dfrac{\partial N}{\partial t} =\alpha\;B\left(\Phi_+,T_+^{K}\right)+\delta\;T_+^{K}\;\Phi_+ +N\left(\beta_1\;T_+^{K}+\beta_2\;\Phi_+\right)\leq \mathcal{C}_1+\mathcal{C}_2\;N$$
where $\mathcal{C}_1$ and $\mathcal{C}_2$ depend on $\alpha$, $\beta_1$, $\beta_2$, $\delta$ and $K$. Hence,

\begin{equation}\label{*}
N\left(t,x\right)\leq \dfrac{\mathcal{C}_1}{\mathcal{C}_2}\left(e^{\mathcal{C}_2\;t}-1\right)+e^{\mathcal{C}_2\;t}\;N_0\left(x\right)\leq C\left(T_f\right)=e^{\mathcal{C}_2\;T_f}\left(\dfrac{\mathcal{C}_1}{\mathcal{C}_2}+K\right)=\mathcal{C}\left(T_f\right).
\end{equation}

In particular, $C\left(T_f\right)>0$ is an upper bound with an exponential growth depending on the final time $T_f$.
\end{proof}

By Lemma \ref{estimaciones}, 
we deduce that if $\left(T,N,\Phi\right)$ is a classical solution of $\left(\ref{problin}\right)$ then $T_+^{K}=T$, $N_+=N$ and $\Phi_+=\Phi$ and $f_i\left(T_+^{K},N_+,\Phi_+\right)=f_i\left(T,N,\Phi\right)$ for $i=1,2,3$. Hence, we obtain the following crucial corollary
\begin{col}\label{Equiv}
	Under hypotheses of Lemma $\ref{estimaciones}$, if $\left(T,N,\Phi\right)$ is a classical solution of the truncated problem $\left(\ref{problin}\right)$, then $\left(T,N,\Phi\right)$ is also a classical solution of the non truncated problem $\left(\ref{probOriginal}\right)$-$\left(\ref{condinicio}\right)$ and $\left(T,N,\Phi\right)$ satisfies the pointwise bounds $\left(\ref{cotas}\right)$.
\end{col}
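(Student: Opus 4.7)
The plan is to observe that this corollary is an essentially immediate unpacking of Lemma \ref{estimaciones}, with no new analytic content beyond what has just been established. The first step is to invoke Lemma \ref{estimaciones} directly on the given classical solution $(T,N,\Phi)$ of the truncated system \eqref{problin}; this already hands us the pointwise bounds \eqref{cotas}, namely $0\le T\le K$ a.e. in $(0,T_f)\times\Omega$, together with $0\le N\le C(T_f)$ and $0\le \Phi\le K$ on $[0,T_f]\times\overline{\Omega}$.

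The second step is to use these bounds to collapse the truncations. Because $0\le T\le K$ holds a.e., both $T_+=T$ and $T_+^{K}=\min\{K,\max\{T,0\}\}=T$ hold a.e.; similarly $N_+=N$ and $\Phi_+=\Phi$. Substituting these identities on the right-hand sides of \eqref{problin} yields $f_1(T_+,N_+,\Phi_+)=f_1(T,N,\Phi)$ and $f_i(T_+^{K},N_+,\Phi_+)=f_i(T,N,\Phi)$ for $i=2,3$, so the three equations of \eqref{problin} reduce verbatim to those of \eqref{probOriginal}.

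The third and final step is bookkeeping against Definition \ref{Defi1}. The boundary condition \eqref{condifronte} and the initial data \eqref{condinicio} are shared between the truncated and non-truncated formulations, and the required regularity $T\in V_p$, $N,\Phi\in\mathcal{C}^1([0,T_f];\mathcal{C}^0(\overline{\Omega}))$ is inherited from the assumption that $(T,N,\Phi)$ was a classical solution of the truncated system. Hence every item of Definition \ref{Defi1} for problem \eqref{probOriginal}-\eqref{condinicio} is satisfied by $(T,N,\Phi)$, and the bounds \eqref{cotas} are precisely what the first step produced. I do not expect any genuine obstacle here: the entire statement is a one-line corollary of the a priori estimates, which is presumably why it appears immediately after Lemma \ref{estimaciones} and is used to justify reducing the existence problem for \eqref{probOriginal} to that of the truncated system \eqref{problin}.
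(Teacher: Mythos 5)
Your proposal is correct and matches the paper's argument exactly: the paper also deduces from Lemma \ref{estimaciones} that $T_+^{K}=T$, $N_+=N$, $\Phi_+=\Phi$, so the truncated reaction terms coincide with $f_i(T,N,\Phi)$ and the truncated system reduces to the original one with the bounds $\left(\ref{cotas}\right)$ already in hand. No further comment is needed.
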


\begin{teo}[Existence of classical solution of $\left(\ref{problin}\right)$]
Let $\Omega\subseteq\mathbb{R}^3$ be a bounded domain of class $\mathcal{C}^2$ and $\left(0,T_f\right)$ a time interval, with $0<T_f<+\infty$ and let $T_0\in W_{n}^{2-2/p,p}\left(\Omega\right)$ for some $p>3$ and $N_0,\Phi_0\in\mathcal{C}^0\left(\overline{\Omega}\right)$ satisfying $\left(\ref{hipotesis0}\right)$. Then, there exists a unique classical solution $\left(T,N,\Phi\right)$ of system $\left(\ref{problin}\right)$ in the sense of Definition $\ref{Defi1}$. Moreover, $\left(T,N,\Phi\right)$ satisfies estimates $\left(\ref{cotas}\right)$.
\end{teo}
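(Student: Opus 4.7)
The plan is to apply the Leray--Schauder fixed point theorem (Theorem \ref{Leray}) to a solution operator $\mathcal{R}$ that decouples the ODE part from the PDE part. Working in the Banach space $V:=\mathcal{C}^{0}([0,T_f]\times\overline{\Omega})$, I would define $\mathcal{R}:V\rightarrow V$ as follows. Given $\overline{T}\in V$, first solve the ODE system, pointwise in $x\in\overline{\Omega}$,
\begin{equation*}
\partial_t N = f_2\bigl(\overline{T}_+^{K},N,\Phi\bigr),\qquad \partial_t \Phi = f_3\bigl(\overline{T}_+^{K},N,\Phi\bigr),
\end{equation*}
with initial data $N(0,x)=N_0(x)$ and $\Phi(0,x)=\Phi_0(x)$; then plug the resulting $N,\Phi$ into the linear parabolic problem
\begin{equation*}
\partial_t T-\Delta T = f_1\bigl(\overline{T}_+,N_+,\Phi_+\bigr),\qquad T(0)=T_0,\qquad \partial_n T\bigr\vert_{\partial\Omega}=0,
\end{equation*}
and set $\mathcal{R}(\overline{T}):=T$.

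First I would check that $\mathcal{R}$ is well defined. Because $\overline{T}_+^{K}\in[0,K]$ and $f_2,f_3$ are locally Lipschitz by Lemma \ref{lemafi}, standard ODE theory yields for each $x$ a unique maximal solution $(N,\Phi)$; the linear-in-$(N,\Phi)$ structure of $f_2,f_3$ combined with the uniform bound on $\overline{T}_+^{K}$ produces a global-in-time bound uniform in $x$, so the solution exists on the whole $[0,T_f]$. For joint continuity in $(t,x)$, I would extend $N_0,\Phi_0$ and $\overline{T}_+^{K}$ continuously to the whole space via Lemma \ref{lemaExt}, regard $x$ as a parameter, and invoke Theorem \ref{parameter} to conclude $N,\Phi\in\mathcal{C}^{0}([0,T_f]\times\overline{\Omega})$. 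Consequently $f_1(\overline{T}_+,N_+,\Phi_+)\in L^{\infty}\subset L^{p}((0,T_f)\times\Omega)$ and Lemma \ref{w2p} delivers a unique $T\in V_p$.

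Next I would verify the remaining hypotheses of Leray--Schauder. Compactness of $\mathcal{R}$ follows from the embedding $V_p\hookrightarrow\hookrightarrow V$ valid for $p>3$, via $W^{2-2/p,p}(\Omega)\hookrightarrow\hookrightarrow\mathcal{C}^{0}(\overline{\Omega})$ and an Aubin--Lions argument providing Hölder time regularity. Continuity is obtained by chaining Theorem \ref{parameter} (uniform convergence of $(N,\Phi)$ whenever $\overline{T}_n\rightarrow\overline{T}$ in $V$) with the linear continuous dependence of Lemma \ref{w2p} applied to the PDE, after passing the sources to the limit in $L^p$ using continuity of $f_1$ and dominated convergence. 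For the $\lambda$-independent a priori bound, assume $T=\lambda\,\mathcal{R}(T)$ with $\lambda\in[0,1]$; then $T$ solves $\partial_t T-\Delta T=\lambda f_1(T_+,N_+(T),\Phi_+(T))$ with initial datum $\lambda T_0$. Writing $f_1=T_+\widetilde{f}_1$ and testing with $T_-$ gives $T\geq 0$, exactly as in Lemma \ref{estimaciones}; testing with $(T-K)_+$ and using $f_1\leq \rho T_+(1-T_+/K)\leq 0$ on $\{T\geq K\}$ together with $\lambda T_0\leq K$ gives $T\leq K$, uniformly in $\lambda$. This $L^{\infty}$ bound upgrades, via the source bound and Lemma \ref{w2p}, to a bound on $T$ in $V_p$, hence in $V$.

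The main obstacle I expect is the careful handling of the $(N,\Phi)$-ODE as a Cauchy problem continuously parametrized by $x\in\overline{\Omega}$, and the honest transfer of this spatial regularity into the PDE step: one needs $(N,\Phi)$ to be jointly continuous in $(t,x)$ in order for $f_1(\overline{T}_+,N_+,\Phi_+)$ to sit in $L^p$ and for $\mathcal{R}$ to be compact. This is precisely the role of Lemma \ref{lemaExt} combined with Theorem \ref{parameter}, but the bookkeeping---uniform existence on the whole interval $[0,T_f]$ and true joint continuity in $(t,x)$---requires some care. Once the fixed point $T$ is produced, Corollary \ref{Equiv} shows that $(T,N,\Phi)$ is a classical solution of $(\ref{problin})$ satisfying $(\ref{cotas})$, and uniqueness is already covered by Theorem \ref{unicidad}.
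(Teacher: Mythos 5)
Your proposal is correct and follows essentially the same route as the paper: the same ODE-then-PDE splitting of the fixed-point operator, the same use of Lemma \ref{lemaExt} and Theorem \ref{parameter} to get joint $(t,x)$-continuity of $(N,\Phi)$, the same compact embedding $V_p\hookrightarrow\hookrightarrow \mathcal{C}^0([0,T_f];\mathcal{C}^0(\overline{\Omega}))$, and the conclusion via Leray--Schauder together with Corollary \ref{Equiv} and Theorem \ref{unicidad}. The one (harmless, arguably simplifying) deviation is that you evaluate $f_1$ at the input $\overline{T}$ rather than at the output $T$ as in $(\ref{EDPlin})$, which turns your PDE step into a linear solve and makes the homotopy equation read $T_t-\Delta T=\lambda\, f_1(T_+,N_+,\Phi_+)$ with datum $\lambda T_0$ instead of the paper's $\lambda\, f_1(T_+/\lambda,N_+,\Phi_+)$; both versions give the same $\lambda$-uniform bound $0\le T\le K$.
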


%
%
	
	\begin{obs}
		In the revision process, one of the referees pointed out that the proof of the existence and uniqueness of the global classical solution could be deduced from the Rothe's book \cite{Rothe_1984}. In fact, the part II of \cite{Rothe_1984} is devoted to degenerate parabolic systems with linear diffusion, where some variables have zero diffusion coefficient, remaining a mixed PDE-ODE system as $\left(\ref{probOriginal}\right)$. The argument developed in \cite{Rothe_1984} is completely different to ours made in this paper. In fact, in \cite{Rothe_1984} the existence and uniqueness of a mild solution (satisfying an integral system) is proved in three steps, first local existence via a contractive map, second the length of the local time existence is a bounded from below and third proving an extensibility result. By the contrary, here we will prove the existence of global in time solution directly by applying the Leray-Schauder fixed-point Theorem. 
		
	\end{obs}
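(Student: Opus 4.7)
The plan is to apply the Leray--Schauder fixed-point theorem (Theorem \ref{Leray}) in the Banach space $V=\mathcal{C}^0([0,T_f]\times\overline{\Omega})$. Given $\widetilde{T}\in V$, I will define $\mathcal{R}(\widetilde{T})=T$ in two decoupled steps. First, for each $x\in\overline{\Omega}$ treated as a parameter, I solve the Cauchy problem
$$\partial_t N = f_2(\widetilde{T}_+^K, N_+, \Phi_+),\qquad \partial_t \Phi = f_3(\widetilde{T}_+^K, N_+, \Phi_+),$$
with $(N,\Phi)(0,x)=(N_0(x),\Phi_0(x))$. Since $\widetilde{T}_+^K\in[0,K]$ and $f_2,f_3$ are locally Lipschitz by Lemma \ref{lemafi}, the sign-preservation and linear-growth arguments already used in Lemma \ref{estimaciones} give global existence on $[0,T_f]$ with $0\le N\le C(T_f)$ and $0\le \Phi\le K$. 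To promote this to joint continuity of $(N,\Phi)$ in $(t,x)$, I extend $\widetilde{T}_+^K$, $N_0$, $\Phi_0$ to continuous functions on all of $\mathbb{R}^3$ via Lemma \ref{lemaExt}, so that $x$ lives in an open parameter set, and then invoke Theorem \ref{parameter}. Second, I solve the linear parabolic problem
$$\partial_t T-\Delta T = f_1(\widetilde{T}_+, N_+, \Phi_+),\qquad T(0)=T_0,\qquad \tfrac{\partial T}{\partial n}\big|_{\partial\Omega}=0.$$
Because $\widetilde{T}$ and $(N,\Phi)$ are bounded the right-hand side lies in $L^\infty\subset L^p$, so Lemma \ref{w2p} provides a unique $T\in V_p$; since $p>3$, the Sobolev embedding $W^{2-2/p,p}(\Omega)\hookrightarrow\mathcal{C}^0(\overline{\Omega})$ places $T$ in $V$, completing the definition of $\mathcal{R}$.

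Next I will verify the hypotheses of Theorem \ref{Leray}. For continuity of $\mathcal{R}$, if $\widetilde{T}_n\to\widetilde{T}$ in $V$, Theorem \ref{parameter} yields uniform convergence of $(N_n,\Phi_n)$, whence $f_1(\widetilde{T}_{n,+},N_{n,+},\Phi_{n,+})\to f_1(\widetilde{T}_+,N_+,\Phi_+)$ in $L^p$, and the linear estimate of Lemma \ref{w2p} forwards this to $T_n\to T$ in $V_p\hookrightarrow V$. For compactness, on a bounded set of $V$ the right-hand side is bounded in $L^\infty$, so $T$ is bounded in $V_p$; and the embedding $V_p\hookrightarrow V$ is compact because the Sobolev embedding $W^{2-2/p,p}(\Omega)\hookrightarrow\mathcal{C}^0(\overline{\Omega})$ is compact in dimension $3$, while equicontinuity in $t$ comes from $\partial_t T\in L^p(L^p)$ via Hölder.

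For the a priori estimate uniform in $\lambda\in[0,1]$, let $T\in V$ satisfy $T=\lambda\,\mathcal{R}(T)$. Then $T$ solves $\partial_t T-\Delta T=\lambda f_1(T_+,N_+,\Phi_+)$, with $(N,\Phi)$ produced from $T$ by the ODE step and satisfying $0\le N\le C(T_f)$, $0\le \Phi\le K$. The argument of Lemma \ref{estimaciones} goes through: testing with $T_-$ annihilates the right-hand side because $f_1$ factors as $T_+\,\widetilde{f}_1$, and testing with $(T-K)_+$ yields a non-positive contribution because $f_1(T_+,N_+,\Phi_+)\le 0$ whenever $T\ge K$ and $N,\Phi\ge 0$; the factor $\lambda\in[0,1]$ preserves both sign conclusions. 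Hence $0\le T\le K$, the right-hand side is bounded independently of $\lambda$, and Lemma \ref{w2p} gives $\|T\|_{V_p}\le C$ independent of $\lambda$, so $\|T\|_V\le C$. Theorem \ref{Leray} then produces a fixed point, and the associated triple $(T,N,\Phi)$ is by construction a classical solution of (\ref{problin}) in the sense of Definition \ref{Defi1} satisfying (\ref{cotas}). Uniqueness follows by repeating verbatim the proof of Theorem \ref{unicidad} on the truncated system.

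The main obstacle I anticipate is not any single estimate but the careful packaging of the spatial dependence in the ODE block: since $(N,\Phi)$ are constructed pointwise in $x$, upgrading them to functions jointly continuous in $(t,x)$ with enough regularity to feed the PDE for $T$ requires the continuous-extension lemma (to make $x$ range in an open set) combined with the continuous-dependence theorem for ODEs, and it is precisely this step that keeps $f_1(\widetilde{T}_+,N_+,\Phi_+)$ in $L^p$ and makes both the continuity and the compactness of $\mathcal{R}$ work.
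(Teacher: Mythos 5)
Your proposal is correct and follows the paper's overall architecture (ODE block solved for each $x$ as a parameter using the extension lemma and continuous dependence, then a parabolic step, then Leray--Schauder on $\mathcal{C}^0$), but it diverges in one substantive place: you freeze $\widetilde{T}$ in the reaction term and solve the \emph{linear} parabolic problem $\partial_t T-\Delta T=f_1(\widetilde{T}_+,N_+,\Phi_+)$, whereas the paper's $R_2$ solves the \emph{nonlinear} problem $\partial_t T-\Delta T=f_1(T_+,N_+,\Phi_+)$ with the unknown $T$ itself in the right-hand side. This changes the supporting machinery: the paper must invoke the sub-/super-solution existence theorem of Deuel--Hess \cite{Deuel_1978} to produce a weak solution trapped in $[0,K]$, prove uniqueness of that nonlinear problem by comparison, and establish continuity of $R_2$ through a compactness/subsequence argument (Lemma \ref{R2conti}); your linear version obtains existence, uniqueness, continuity and the $V_p$ bound in one stroke from Lemma \ref{w2p}, which is genuinely simpler. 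The price is that your intermediate iterate $\mathcal{R}(\widetilde{T})$ carries no pointwise bound $0\leq T\leq K$ for arbitrary $\widetilde{T}$; you correctly observe that this does not matter, since the right-hand side is bounded anyway (because $\widetilde{T}$ is continuous on a compact set and $(N,\Phi)$ obey the ODE bounds) and the $[0,K]$ bound is only needed along the family $T=\lambda\,\mathcal{R}(T)$, where your maximum-principle computation with the factor $\lambda$ is correct (and in fact slightly cleaner than the paper's Lemma \ref{Rlamda}, which has to track the rescaled argument $T_+/\lambda$ inside $f_1$). One small imprecision: the continuity of the ODE block under $\widetilde{T}_n\rightarrow\widetilde{T}$ does not follow from Theorem \ref{parameter}, which concerns dependence on initial data and parameters for a \emph{fixed} vector field, while here the vector field itself changes with $n$; the paper handles this step with an integral formulation and Gronwall's lemma, and you would need the same.
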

\begin{proof}
The proof splits in several steps: 


\paragraph{Step $\boldsymbol{1}$}\hspace{-0.38cm}\textbf{.}
\\
\\
We define the map

$$
\begin{array}{cccccc}
\mathbf{R}:& \mathcal{C}^0\left(\left[0,T_f\right];\mathcal{C}^0\left(\overline{\Omega}\right)\right)&\stackbin{R_1}{\rightarrow}&\left(\mathcal{C}^1\left(\left[0,T_f\right];\mathcal{C}^0\left(\overline{\Omega}\right)\right)\right)^2&\stackbin{R_2}{\rightarrow}& \mathcal{C}^0\left(\left[0,T_f\right];\mathcal{C}^0\left(\overline{\Omega}\right)\right)\\
\\
&\widetilde{T}&&\left(N,\Phi\right)&&T
\end{array}
$$
where $R_1(\widetilde{T}):=\left(N,\Phi\right)$ is the solution of the ordinary differential problem 
%
%
%

\begin{equation}\label{SDOlin}
\left\{\begin{array}{c}
\left(\begin{array}{c}
\dfrac{\partial N}{\partial t}\\
\\
\dfrac{\partial \Phi}{\partial t}
\end{array}\right) = \left(\begin{array}{c}
f_2\left(\widetilde{T}_+^{K},N_+,\Phi_+\right)\\
\\
f_3\left(\widetilde{T}_+^{K},N_+,\Phi_+\right)
\end{array}\right)
\\
\\
\left(\begin{array}{c}
N\left(0,x\right)
\\
\\
\Phi\left(0,x\right)
\end{array}\right)
=
\left(\begin{array}{c}
N_0\left(x\right)\\
\\
\Phi_0\left(x\right)
\end{array}\right)
\end{array}
\right.
\end{equation}
%
%
%
and $R_2\left(N,\Phi\right)=:T$ is the solution of the nonlinear parabolic problem,

\begin{equation}\label{EDPlin}
\left\{\begin{array}{l}
T_t-\Delta\;T = f_1\left(T_+,N_+,\Phi_+\right),\\
\\
\dfrac{\partial T}{\partial \text{n}}\Bigg\vert_{\partial\Omega}=0,\\
\\
T\left(0,\cdot\right)=T_0(x).
\end{array}\right.
\end{equation}


%
%
%

\paragraph{Step $\boldsymbol{2}$.}

%
\begin{lem}\label{R1welldefi_conti}
	The map $R_1:\mathcal{C}^0\left(\left[0,T_f\right];\mathcal{C}^0\left(\overline{\Omega}\right)\right)\rightarrow\left(\mathcal{C}^1\left(\left[0,T_f\right];\mathcal{C}^0\left(\overline{\Omega}\right)\right)\right)^2$ is well defined and it is continuous.
\end{lem}
\begin{proof}
	
Step $1$: $R_1$ is well defined. Observe that to obtain the solution $\left(N,\Phi\right)$ of $\left(\ref{SDOlin}\right)$, we have to solve an ordinary differential system which depends on the parameter $x\in\overline{\Omega}$, appearing in the ODE system via the function $\widetilde{T}_+^K(t,x)$ and on the initial data $\left(N_0\left(x\right),\Phi_0\left(x\right)\right)$. 
\\

We are going to define time and space extensions, respectively. First, we define the constant time extension as follows

$$\begin{array}{rcl}
Ext_t:\mathcal{C}^0\left(\left[0,T_f\right]\right)&\rightarrow&\mathcal{C}^0\left(\mathbb{R}\right)
\\
f&\mapsto&
Ext_t\left(f\right)=\left\{
\begin{array}{l}
f\left(0\right)\quad t\leq0,\\
\\
f\left(t\right)\quad 0\leq t\leq T_f,\\
\\
f\left(T_f\right)\quad t\geq T_f.
\end{array}
\right.
\end{array}$$

For the space extension, we use Lemma $\ref{lemaExt}$
$$\begin{array}{rcl}
Ext_x:\mathcal{C}^0\left(\overline{\Omega}\right)&\rightarrow&\mathcal{C}^0\left(\mathbb{R}^3\right)
\\
\\
f&\mapsto&Ext_x\left(f\right).
\end{array}$$

Finally, we consider the global extension 

$$\begin{array}{rcl}
Ext:\mathcal{C}^0\left(\left[0,T_f\right];\mathcal{C}^0\left(\overline{\Omega}\right)\right)&\rightarrow&\mathcal{C}^0\left(\mathbb{R};\mathcal{C}^0\left(\mathbb{R}^3\right)\right)
\\
\\
f&\mapsto&Ext\left(f\right):=\left(Ext_t\circ Ext_x\right)\left(f\right).
\end{array}$$

Hence, we can rewrite $\left(\ref{SDOlin}\right)$ defined in open sets as


\begin{equation}\label{SDOlin2}
\left\{ \begin{array}{l}

y'\left(t\right)=F\left(t,y,x\right)\in\mathbb{R}^2\;\;\text{  for  }\left(t,y,x\right)\in\mathbb{R}\times\mathbb{R}^2\times\mathbb{R}^3\\
\\
y\left(0\right)=y_0\left(x\right)\in\mathbb{R}^2
\end{array}\right.
\end{equation} 
\\
where we denote $y=\left(N,\Phi\right)$ and
\\
\begin{equation}\label{F1}
F\left(t,y,x\right)=\left(\begin{array}{c}
f_2\left(\left(Ext\left(\widetilde{T}\left(t,x\right)\right)\right)_+^{K},N_+,\Phi_+\right)\\
\\
f_3\left(\left(Ext\left(\widetilde{T}\left(t,x\right)\right)\right)_+^{K},N_+,\Phi_+\right)
\end{array}\right) ,
\end{equation}
\\
\begin{equation}\label{condinicialF1}
y_0\left(x\right)=\left(\begin{array}{c}
Ext_x\left(N_0\left(x\right)\right)\\
\\
Ext_x\left(\Phi_0\left(x\right)\right)
\end{array}\right).
\end{equation}
\\

Since $$0\leq \left(Ext\left(\widetilde{T}\left(t,x\right)\right)\right)_+^{K}\leq K \quad\forall\left(t,x\right)\in\left[0,T_f\right]\times\overline{\Omega}$$ and $$0\leq Ext_x\left(N_0\left(x\right)\right),Ext_x\left(\Phi_0\left(x\right)\right)\leq K \quad\forall x\in\overline{\Omega},$$ we can argue similarly to Lemma $\ref{estimaciones}$ to conclude that the solution of $\left(\ref{SDOlin2}\right)$ satisfies that $0\leq\Phi\left(t,x\right)\leq K$ and $0\leq N\left(t,x\right)\leq C\left(T_f\right)$ for all $\left(t,x\right)\in\left[0,T_f\right]\times\overline{\Omega}$.
\\

Then, by Lemmas $\ref{lemaB}$ and $\ref{lemafi}$ and definition of $F$, we have that $F\left(t,y,x\right)$ is continuous in $\mathbb{R}\times\mathbb{R}^2\times\mathbb{R}^3$ and locally lipschitz with respect to $y\in\mathbb{R}^2$. Hence for each $x\in\overline{\Omega}$ we can apply the Picard's theorem to obtain a local in time unique solution $y\left(\cdot,x\right)$ of $\left(\ref{SDOlin2}\right)$. Moreover, since we know that the solution of $\left(\ref{SDOlin2}\right)$ is bounded for all $t\in\left[0,T_f\right]$, the solution can be extended to $\left[0,T_f\right]$ for each $x\in\overline{\Omega}$.
\\


Now, we can apply Theorem $\ref{parameter}$, with $U=\mathbb{R}\times\mathbb{R}^2\times\mathbb{R}^3$, $\lambda=x\in\mathbb{R}^3$ and $y_0\left(x\right)$ defined in $\left(\ref{condinicialF1}\right)$ to the Cauchy's problem $\left(\ref{SDOlin2}\right)$. Thus, we have that for each $y_0=y_0(x)\in\mathbb{R}^2$ defined in $\left(\ref{condinicialF1}\right)$ such that $0\leq Ext_x(N_0(x)), Ext_x(\Phi_0(x))\leq K$ in $\mathbb{R}^3$, the interval $\left[0,T_f\right]\subseteq I_{\left(Ext_x(N_0(x)), Ext_x(\Phi_0(x))\right)}$ and hence, the set

$$\widetilde{\Theta}=\left\{ \begin{array}{c}
\left(t,\left(Ext_x\left(N_0\left(x\right)\right),Ext_x\left(\Phi_0\left(x\right)\right)\right),x\right)\in \mathbb{R}\times\mathbb{R}^2\times\mathbb{R}^3\;:\;
t\in I_{\left(0,\left(Ext_x(N_0(x)), Ext_x(\Phi_0(x))\right)\right)}
\end{array} \right\}$$
is an open set of $\mathbb{R}^6$ and the map $y=y\left(t;\left(Ext_x\left(N_0\left(x\right)\right),Ext_x\left(\Phi_0\left(x\right)\right)\right),x\right)$ is continuous from $\widetilde{\Theta}$ to $\mathbb{R}^2$.
\\

In conclusion, given $N_0,\;\Phi_0\in\mathcal{C}^0\left(\overline{\Omega}\right)$ such that $0\leq N_0,\Phi_0\leq K$ in $\overline{\Omega}$, there exists a solution $y=y\left(t;\left(Ext_x\left(N_0\left(x\right)\right),Ext_x\left(\Phi_0\left(x\right)\right)\right),x\right)$ of $\left(\ref{SDOlin2}\right)$ whose restriction to $[0,T_f]\times \overline\Omega$ $$\left(N,\Phi\right)\left(t,x\right)=y\left(t;\left(Ext_x\left(N_0\left(x\right)\right),Ext_x\left(\Phi_0\left(x\right)\right)\right),x\right)\quad\forall \left(t,x\right)\in\left[0,T_f\right]\times\overline{\Omega}$$ 
satisfies that
$$\left(N,\Phi\right)\in\left(\mathcal{C}^1\left(\left[0,T_f\right];\mathcal{C}^0\left(\overline{\Omega}\right)\right)\right)^2$$
and it is the unique solution of $\left(\ref{SDOlin}\right)$. 
\\

Step $2$: $R_1$ is continuous. Take $\widetilde{T}_n\rightarrow \widetilde{T}$ in $\mathcal{C}^0\left(\left[0,T_f\right];\mathcal{C}^0\left(\overline{\Omega}\right)\right)$. 
We use the same vectorial notation as before and we consider the following integral formulation of $\left(\ref{SDOlin}\right)$, 

$$y\left(t;y_0\left(x\right)x\right)=y_0\left(x\right)+\int_{0}^{t}\widetilde{F}\left(s,y\left(s,x\right),x\right)\;ds$$
where in this case $y=\left(N,\Phi\right)$ and

\begin{equation}\label{F1gorro}
\widetilde{F}\left(t,y,x\right)=\left(\begin{array}{c}
f_2\left(\widetilde{T}_+^{K}\left(t,x\right),N,\Phi\right)\\
\\
f_3\left(\widetilde{T}_+^{K}\left(t,x\right),N,\Phi\right)
\end{array}\right).
\end{equation}

%
%

Now, we take $R_1\left(\widetilde{T}_n\right)=y_n$ and $R_1\left(\widetilde{T}\right)=y$ the solutions of $\left(\ref{SDOlin}\right)$ associated to $\widetilde{T}_n$ and $\widetilde{T}$, respectively. Thus, denoting $y\left(t,\cdot\right)=y\left(t;y_0\left(\cdot\right),\cdot\right)$, we get

$$\Big\| y_n\left(t\right)- y\left(t\right)\Big\|_{\left(\mathcal{C}^0\left(\overline{\Omega}\right)\right)^{2}}=\displaystyle\Bigg\|\int_{0}^{t}\widetilde{F}\left(s,y_n\left(s\right),x\right)-\widetilde{F}\left(s,y\left(s\right),x\right)\;ds\Bigg\|_{\left(\mathcal{C}^0\left(\overline{\Omega}\right)\right)^{2}}
.$$
By Lemma $\ref{lemafi}$ and the form of $\left(\ref{F1gorro}\right)$, we deduce that $\widetilde{F}\left(t,y,x\right)$ is locally lipschitz in $\mathbb{R}\times\mathbb{R}^2\times\mathbb{R}^3$ with respect to $\left(t,y,x\right)$ . Moreover, $y_n$ and $y$ are bounded in $\mathcal{C}^0\left(\left[0,T_f\right];\mathcal{C}^0\left(\overline{\Omega}\right)\right)$, then, we have that

$$\Big\|y_n\left(t\right)-y\left(t\right)\Big\|_{\mathcal{C}^0\left(\overline{\Omega}\right)^2}\leq \mathcal{C}\;\displaystyle\int_{0}^{t}\left(\Big\|\left(y_n-y\right)\left(s\right)\Big\|_{\left(\mathcal{C}^0\left(\overline{\Omega}\right)\right)^2}+\Big\|\left(\left(\widetilde{T_n}\right)_+^{K}-\widetilde{T}_+^{K}\right)\left(s\right)\Big\|_{\mathcal{C}^0\left(\overline{\Omega}\right)}\right)\;ds.$$

Applying Gronwall's lemma, we deduce

\begin{equation}\label{R_1_cont}
\Big\|\left(y_n-y\right)\left(t\right)\Big\|_{\mathcal{C}^0\left(\overline{\Omega}\right)^2}\leq \displaystyle\mathcal{C}\;e^{\mathcal{C}\;t}\left(\int_{0}^{T_f}\Big\|\left(\left(\widetilde{T_n}\right)_+^{K}-\widetilde{T}_+^{K}\right)\left(s\right)\Big\|_{\mathcal{C}^0\left(\overline{\Omega}\right)}\;ds\right).
\end{equation}
\\

Now, in $\left(\ref{R_1_cont}\right)$ we take maximum in $t\in\left[0,T_f\right]$  in the left side and we bound in the right side. Thus,

$$\Big\|\left(y_n-y\right)\Big\|_{\mathcal{C}^0\left(\left[0,T_f\right];\mathcal{C}^0\left(\overline{\Omega}\right)\right)^2}\leq \displaystyle\mathcal{C}\;e^{\mathcal{C}\;T_f}\Big\|\left(\left(\widetilde{T_n}\right)_+-\widetilde{T}_+\right)\Big\|_{\\\mathcal{C}^0\left(\left[0,T_f\right];\mathcal{C}^0\left(\overline{\Omega}\right)\right)}\stackbin[n\rightarrow\infty]{}{\longrightarrow}0.$$

Hence, we obtain that $y_n\rightarrow y$ in $\left(\mathcal{C}^0\left(\left[0,T_f\right];\mathcal{C}^0\left(\overline{\Omega}\right)\right)\right)^2$.
\\

Moreover, it follows $$\widetilde{F}\left(y_n\left(t,x\right),t,x\right)\stackbin[n\rightarrow\infty]{}{\longrightarrow}\widetilde{F}\left(y\left(t,x\right),t,x\right)\;\;\text{ in}\;\; \left(\mathcal{C}^0\left(\left[0,T_f\right];\mathcal{C}^0\left(\overline{\Omega}\right)\right)\right)^2$$
whence we deduce that 
$$\partial_t\;y_n\left(t,x\right)=\widetilde{F}\left(y_n\left(t,x\right),t,x\right)\stackbin[n\rightarrow\infty]{}{\longrightarrow}\widetilde{F}\left(y\left(t,x\right),t,x\right)= \partial_t\;y\left(t,x\right) \text{ in } \left(\mathcal{C}^0\left(\left[0,T_f\right];\mathcal{C}^0\left(\overline{\Omega}\right)\right)\right)^2.$$
%

Hence, we get that $R_1$ is continuous from $\mathcal{C}^0\left(\left[0,T_f\right];\mathcal{C}^0\left(\overline{\Omega}\right)\right)$ to $\left(\mathcal{C}^1\left(\left[0,T_f\right];\mathcal{C}^0\left(\overline{\Omega}\right)\right)\right)^2$.
\end{proof}
%

\begin{lem}\label{R2welldefi}
 The map $R_2:\left(\mathcal{C}^1\left(\left[0,T_f\right];\mathcal{C}^0\left(\overline{\Omega}\right)\right)\right)^2\rightarrow\mathcal{C}^0\left(\left[0,T_f\right];\mathcal{C}^0\left(\overline{\Omega}\right)\right)$ is well defined.
\end{lem}
\begin{proof}
Observe that the pair of constant functions $\left(\underline{T},\overline{T}\right)=\left(0,K\right)$ is a sub-super solution of $\left(\ref{EDPlin}\right)$ and and the reaction term in $\left(\ref{EDPlin}\right)$ is bounded a.e. $(t,x)\in (0,T_f)\times\Omega$ and for $T\in \left[\underline{T},\overline{T}\right]$ . Then, applying Theorem of  \cite[p. 94]{Deuel_1978}, there exists at least a weak solution $T$ of $\left(\ref{EDPlin}\right)$ such that $0\leq T\leq K$ a.e. in $\left(0,T_f\right)\times\Omega$.
\\
%

Since $T\in\left[0,K\right]$, we get that the application $\left(t,x,T_+\right)\to f_1(T_+(t,x), N_+(t,x), \Phi_+(t,x))$ is bounded in $L^{\infty}\left(0,T_f;L^{\infty}\left(\Omega\right)\right)$. Hence, applying Lemma \ref{w2p} since $T_0\in W_{n}^{2-2/p,p}\left(\Omega\right)$, we deduce that
$T\in V_p$.
\\

In particular, since $W_{n}^{2-2/p,p}\left(\Omega\right)\hookrightarrow\mathcal{C}^0\left(\overline{\Omega}\right)$, we get
$$T\in\mathcal{C}^0\left(\left[0,T_f\right];\mathcal{C}^0\left(\overline{\Omega}\right)\right).$$

The uniqueness of $T=R_2\left(N,\Phi\right)$ can be deduced by a comparison argument using the regularity of $\left(T,N,\Phi\right)$.

\end{proof}

Before proving that $R_2$ is continuous, we show the following result:

\begin{lem}\label{compact}
	For any bounded set $A$ of $\left(\mathcal{C}^1\left(\left[0,T_f\right];\mathcal{C}^0\left(\overline{\Omega}\right)\right)\right)^2$, then $R_2\left(A\right)$ is bounded in $V_p$ for some $p>3$, where $V_p$ is the Banach space defined in Lemma \ref{w2p}.
\end{lem}
	Notice that, by Aubion-Lions lemma (see \cite[Théoréme 5.1, p. 58]{Lions_1969}) and \cite[Corollary 4]{Simon_1987}, one has the compact embedding
	$$V_p\hookrightarrow\mathcal{C}^0\left(\left[0,T_f\right];\mathcal{C}^0\left(\overline{\Omega}\right)\right).$$

\begin{proof}
 Given $\left(N,\Phi\right)\in A$ a bounded set of $\left(\mathcal{C}^1\left(\left[0,T_f\right];\mathcal{C}^0\left(\overline{\Omega}\right)\right)\right)^2$, then
 
 \begin{equation}\label{N_Fi_acotado}
 \| N\|_{\mathcal{C}^1\left(\left[0,T_f\right];\mathcal{C}^0\left(\overline{\Omega}\right)\right)},\;\|\Phi\|_{\mathcal{C}^1\left(\left[0,T_f\right];\mathcal{C}^0\left(\overline{\Omega}\right)\right)}\leq\widetilde{\mathcal{C}}
 \end{equation}
 and there exists a unique $T=R_2\left(N,\Phi\right)$ solution of $\left(\ref{EDPlin}\right)$. Moreover, we have that the application $(t,x)\to f_1(T_+(t,x),N(t,x),\Phi(t,x))$ is bounded in $L^\infty\left(\left(0,T_f\right);L^\infty\left(\Omega\right)\right)$. Thus by Lemma \ref{w2p} and $\left(\ref{N_Fi_acotado}\right)$,
%
%
%
%

$$\Big\|T\Big\|_{V_p}\leq\mathcal{C}\left( \Big\|f_1\left(T_+,N_+,\Phi_+\right)\Big\|_{L^p\left(\left(0,T_f\right);L^p\left(\Omega\right)\right)},\;\Big\| T_0\Big\|_{W^{2-2/p,p}\left(\Omega\right)}\right)\leq\widehat{\mathcal{C}}.$$
\end{proof}

\begin{lem}\label{R2conti}
	 The map $R_2:\left(\mathcal{C}^1\left(\left[0,T_f\right];\mathcal{C}^0\left(\overline{\Omega}\right)\right)\right)^2\rightarrow\mathcal{C}^0\left(\left[0,T_f\right];\mathcal{C}^0\left(\overline{\Omega}\right)\right)$ is continuous.
\end{lem} 
\begin{proof}

Given
\begin{equation}\label{N_Fi_cont}
\left(N_n,\Phi_n\right)\rightarrow\left(N,\Phi\right)\;\;\text{in}\;\; \left(\mathcal{C}^1\left(\left[0,T_f\right];\mathcal{C}^0\left(\overline{\Omega}\right)\right)\right)^2
\end{equation}
we are going to check that $T_n=R_2\left(N_n,\Phi_n\right)\rightarrow T=R_2\left(N,\Phi\right)$ in $\mathcal{C}^0\left(\left[0,T_f\right];\mathcal{C}^0\left(\overline{\Omega}\right)\right)$. 
\\


Applying Lemma \ref{compact}, it holds that $T_n=R_2\left(N_n,\Phi_n\right)$ is bounded in $V_p$, hence there exists a subsequence $T_{n_k}\in V_p$ and a limit $T^*\in V_p$ such that
$$T_{n_k} \rightharpoonup T^*\;\text{weakly in}\;  V_p\;\text{and strongly in}\;\mathcal{C}^0\left(\left[0,T_f\right];\mathcal{C}^0\left(\overline{\Omega}\right)\right)$$
and 

$$\dfrac{\partial T_{n_k}}{\partial t}\rightharpoonup\dfrac{\partial T^*}{\partial t}\quad\text{weakly in }L^p\left(0,T_f;L^p\left(\Omega\right)\right).$$

In particular,

$$\Delta T_{n_k} \rightharpoonup\Delta T^*\quad\text{weakly in } L^p\left(0,T_f;L^p\left(\Omega\right)\right).$$

Using these convergences and $\left(\ref{N_Fi_cont}\right)$, the continuity of $f_1\left(T_+,N_+,\Phi_+\right)$ and the locally lipschitz property of the application $(T,N,\Phi)\to f_1(T_+,N_+,\Phi_+)$ respect to all the variables, we deduce 

$$f_1\left(\left(T_{n_k}\right)_+,\left(N_{n_k}\right)_+,\left(\Phi_{n_k}\right)_+\right)\rightarrow f_1\left(\left(T^*\right)_+,N_+,\Phi_+\right) \text{ strongly in } \mathcal{C}^0\left(\left[0,T_f\right];\mathcal{C}^0\left(\overline{\Omega}\right)\right).$$

Taking $n_k\rightarrow\infty$ we have that $T^*=R_2\left(N,\Phi\right)$ and since the solution of $\left(\ref{EDPlin}\right)$ is unique, then $T^*=T$ and

$$T_n\rightarrow T\;\text{in}\; \mathcal{C}^0\left(\left[0,T_f\right];\mathcal{C}^0\left(\overline{\Omega}\right)\right).$$
\end{proof}

From Lemmas $\ref{R1welldefi_conti}$, $\ref{R2welldefi}$ and $\ref{R2conti}$, we obtain that:
\begin{col}\label{Rcontinuo}
	The map
$$\mathbf{R}:\mathcal{C}^0\left(\left[0,T_f\right];\mathcal{C}^0\left(\overline{\Omega}\right)\right)\rightarrow\mathcal{C}^0\left(\left[0,T_f\right];\mathcal{C}^0\left(\overline{\Omega}\right)\right)$$ is well defined and continuous.
\end{col}

\paragraph{Step $\boldsymbol{3}$.} 
\begin{lem}\label{Rcompac}
	The operator $\mathbf{R}:\mathcal{C}^0\left(\left[0,T_f\right];\mathcal{C}^0\left(\overline{\Omega}\right)\right)\rightarrow\mathcal{C}^0\left(\left[0,T_f\right];\mathcal{C}^0\left(\overline{\Omega}\right)\right)$ is compact.
\end{lem}

\begin{proof}
	
Let $\widetilde{T}\in \mathcal{C}^0\left(\left[0,T_f\right];\mathcal{C}^0\left(\overline{\Omega}\right)\right)$ then by Lemmas $\ref{R1welldefi_conti}$ and $\ref{R2welldefi}$ there exists a unique $T=\mathbf{R}\left(\widetilde{T}\right)$ such that $0\leq T\leq K$ a.e. $\left(t,x\right)\in\left(0,T_f\right)\times\Omega$ and being $T$ the solution of $\left(\ref{EDPlin}\right)$.
\\

Moreover, there exists an unique $\left(N,\Phi\right)\in\left(\mathcal{C}^0\left(\left[0,T_f\right];\mathcal{C}^0\left(\overline{\Omega}\right)\right)\right)^2$ such that $0\leq N,\Phi\leq K$ for all $\left(t,x\right)\in\left[0,T_f\right]\times\overline{\Omega}$.
Hence, $f_1\left(T_+,N_+,\Phi_+\right)$ is bounded in $L^\infty\left(0,T_f;L^\infty\left(\Omega\right)\right)$, in particular, in $L^p\left(0,T_f;L^p\left(\Omega\right)\right)$ for all $p<\infty$. Following a similar argument of Lemma $\ref{compact}$, we obtain that $T$ is bounded in $V_p$ for some $ p>3$.
\\

Finally, applying the compact embedding of $V_p$ in $\mathcal{C}^0([0,T_f];\mathcal{C}^0(\overline\Omega))$, we obtain that $\mathbf{R}$ is compact from $\mathcal{C}^0\left(\left[0, T_f\right],\mathcal{C}^0\left(\overline{\Omega}\right)\right)$ to itself.

\end{proof}

\paragraph{Step $\boldsymbol{4}$.}
\begin{lem}\label{Rlamda}
	For any $T=\lambda\;\mathbf{R}\left(T\right)$, for some $\lambda\in\left[0,1\right]$, then $\Big\|T\Big\|_{\mathcal{C}^0\left(\left[0, T_f\right],\mathcal{C}^0\left(\Omega\right)\right)}\leq\mathcal{C}$ with $\mathcal{C}>0$ independent of $\lambda\in\left[0,1\right]$.
\end{lem}

\begin{proof}

For $\lambda=0$ the result is trivial, hence we suppose $\lambda\in\left(0,1\right]$. 
\\

On the one hand, if we  rewrite $f_1(T_+,N_+,\Phi_+)=T_+\; \widetilde {f_1}(T_+,N_+,\Phi_+)$, we have that,

$$\displaystyle T_t-\Delta\;T=\lambda\;f_1\left(T_+/\lambda,N_+,\Phi_+\right)=\lambda\;\dfrac{T_+}{\lambda}\;\widetilde{f_1}\left(T_+/\lambda,N_+,\Phi_+\right))\leq\rho\;T_+ \left(1-\dfrac{T_+}{\lambda\;K}\right).$$


Since $0\le T(0,x)\le K$ in $\overline\Omega$, we can argue similarly to Lemma $\ref{estimaciones}$ and conclude that $0\leq T\leq  K$ in $\left[0,T_f\right]\times\overline{\Omega}$. Thus, $T$ is bounded $\mathcal{C}^0\left(\left[0, T_f\right],\mathcal{C}^0\left(\overline{\Omega}\right)\right)$ independently of $\lambda\in\left[0,1\right]$.
\end{proof}

Finally, from Corollary $\ref{Rcontinuo}$, and Lemmas $\ref{Rcompac}$ and $\ref{Rlamda}$, the operator $\mathbf{R}$ satisfies the hypotheses of Theorem \ref{Leray}. Thus, we conclude that the map $\mathbf{R}$ has a fixed point $T=\mathbf{R}\left(T\right)$ which is a classical solution of $\left(\ref{problin}\right)$ and consequently it is also a classical solution of $\left(\ref{probOriginal}\right)$-$\left(\ref{condinicio}\right)$.

\end{proof}


%
%

%
%
%
%

\section{Asymptotic behaviour}\label{comp_asin}

\subsection{Stability of the (non-diffusion) ODE system}

Once we have proved the existence and uniqueness of solution for $\left(\ref{probOriginal}\right)$ for any finite time, let us study the long time behaviour of this solution. For that, first of all, we will study the non-diffusion problem

\begin{equation}\label{SDO}
\left\{\begin{array}{rcl}
\dfrac{d\; T}{d\; t} & = & f_1\left(T,N,\Phi\right) \\
&&\\
\dfrac{d\; N}{d\; t} & = & f_2\left(T,N,\Phi\right)\\
&&\\
\dfrac{d\; \Phi}{d\; t} & = &f_3\left(T,N,\Phi\right)
\end{array}\right.
\end{equation}
with initial data
\begin{equation}\label{inicioSDO}
\left(T,N,\Phi\right)\left(0\right)=\left(T_0,N_0,\Phi_0\right)\in\mathbb{R}^3
\end{equation}
%
%
%
%
%
%
%
such that $0\le T_0,\; N_0,\;\Phi_0\le K$ and the functions $f_i=f_i\left(T,N,\Phi\right)\in\mathbb{R}$ for $i=1,2,3$ are defined in $\left(\ref{funciones}\right)$. Since problem $\left(\ref{SDO}\right)$ is decoupled for each $x\in\Omega$, it suffices to study the ODE system $\left(\ref{SDO}\right)$ with a fixed  $\left(T_0,N_0,\Phi_0\right)\in\mathbb{R}^3_{+}$. First of all, we can deduce the same bounds for the solution $\left(T,N,\Phi\right)$ as in problem $\left(\ref{probOriginal}\right)$-$\left(\ref{condinicio}\right)$ and hence $\left(T\left(t\right),N\left(t\right),\Phi\left(t\right)\right)\in\mathbb{R}^3_{+}$ $\forall t\geq0$.
\\

In order to obtain the equilibrium points, we solve the nonlinear algebraic system $f_i\left(T,N,\Phi\right)=0$ for $i=1,2,3$. From $f_2\left(T,N,\Phi\right)=0$ we obtain that

$$\left\{\begin{array}{lcl}
T\;\sqrt{1-\left(P\left(\Phi,T\right)\right)^2}=0,\\
\\
T\;\Phi=0,\\
\\
N\left(T+\Phi\right)=0.
\end{array}
\right.$$	

From $T\;\sqrt{1-\left(P\left(\Phi,T\right)\right)^2}=0$ and $T\;\Phi=0$, we have $T=0$. From the third condition, we obtain that $N=0$ or $\Phi=0$.
\\

Thus, the equilibria of $\left(\ref{SDO}\right)$ are
\vspace{-0.1cm}
\begin{equation}\label{solequilibrio}
\begin{split}
\text{\textbullet}\;\; P_1&=\left\{\left(0,0,0\right)\right\}.\\
 \text{\textbullet}\;\; P_2&=\left\{\left(0,N,0\right),\quad N>0\right\}.\\
 \text{\textbullet}\;\; P_3&=\left\{\left(0,0,\Phi\right),\quad\Phi>0\right\}.
\end{split}
\end{equation}

\begin{obs}
	Observe that $P_1\cup P_2\cup P_3$ is a continuum of equilibria points.
\end{obs}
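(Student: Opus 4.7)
The plan is to verify that the equilibria of the autonomous system (\ref{SDO}) within the biologically invariant positive cone $\mathbb{R}^3_+$ consist precisely of $P_1 \cup P_2 \cup P_3$. The argument reduces to an elementary sign analysis of $f_2$, together with the routine observation that $f_1$ and $f_3$ add no further constraints once $T=0$ is forced.

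First I would restrict the search to $\mathbb{R}^3_+$: the same sub- and super-solution arguments used in Lemma \ref{estimaciones} (specialized to spatially homogeneous data) show that the positive octant is forward invariant for (\ref{SDO}), so only non-negative equilibria are biologically relevant. Next, to sidestep the singularity of $P(\Phi,T)$ at the origin, I would rewrite
\[
f_2(T,N,\Phi) = \alpha\, B(\Phi,T) + \beta_1 NT + \delta T\Phi + \beta_2 N\Phi
\]
using the globally Lipschitz extension $B$ from Lemma \ref{lemaB}. Since every summand is $\geq 0$ on $\mathbb{R}^3_+$, the equation $f_2=0$ forces $B(\Phi,T)=0$, $NT=0$, $T\Phi=0$, and $N\Phi=0$ separately. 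From here one deduces $T=0$ directly: if $\Phi>0$ then $T\Phi=0$ already gives it, while if $\Phi=0$ then $B(0,T)=T$ and $B=0$ forces $T=0$. The final condition then reduces to $N\Phi=0$, recovering the intermediate step already sketched in the text.

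It remains to check that $f_1(0,N,\Phi)=f_3(0,N,\Phi)=0$ under $N\Phi=0$ so that no further constraints are introduced. Every term of $f_1$ carries an explicit factor of $T$ (directly, or through $B(\Phi,T)=T\sqrt{1-P^2}$), so $f_1(0,N,\Phi)\equiv 0$. The first two terms of $f_3$ vanish when $T=0$ (they contain $T/K$ and $T$ respectively), leaving $f_3(0,N,\Phi)=-\beta_2 N\Phi$, which is already zero by $N\Phi=0$. A case split of the reduced system $\{T=0,\; N\Phi=0\}$ in $\mathbb{R}^3_+$---namely $N=\Phi=0$, or $\Phi=0$ with $N>0$, or $N=0$ with $\Phi>0$---recovers exactly the three families $P_1$, $P_2$, and $P_3$.

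The argument is essentially algebraic and admits no serious obstacle; the only minor subtlety is the treatment of $P(\Phi,T)$ near $(\Phi,T)=(0,0)$, where $P$ is not defined pointwise. This is entirely resolved by passing through the continuous extension $B$ supplied by Lemma \ref{lemaB}, so that every algebraic manipulation is legitimate on all of $\mathbb{R}^3_+$.
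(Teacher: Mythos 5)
Your argument is correct and follows essentially the same route as the paper: the equilibria are found by exploiting that every summand of $f_2$ is nonnegative on $\mathbb{R}^3_{+}$, which forces $T=0$ and then $N\,\Phi=0$ and yields exactly $P_1\cup P_2\cup P_3$; your extra care with $B(\Phi,T)$ near the origin and the explicit check that $f_1$ and $f_3$ impose no further constraints only make the paper's brief sketch more complete. The remaining content of the remark---that this set is a \emph{continuum}, i.e.\ a connected, non-discrete family of equilibria---is immediate from your description of it as two closed rays meeting at the origin, though it is worth stating that final topological observation explicitly.
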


\begin{obs}
The linearisation technique around the equilibria $P_1$, $P_2$ and $P_3$ doesn't give any relevant information because one of the eigenvalue of this linearisation is zero.

\end{obs}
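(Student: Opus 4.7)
The plan is to verify, for each equilibrium in $P_1\cup P_2\cup P_3$, that the Jacobian matrix of the right-hand side of $\left(\ref{SDO}\right)$ admits $0$ as an eigenvalue, which is what the remark claims and is what makes the Hartman--Grobman criterion inconclusive. The guiding observation is that $P_2$ and $P_3$ are smooth curves of equilibria joined at $P_1$, and on any smooth continuum of equilibria the Jacobian automatically annihilates the tangent vector to the continuum.

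First, I would rewrite $f_1$, $f_2$, $f_3$ using the globally Lipschitz functions $B(\Phi,T)$ and $D(\Phi,T)$ from Lemma \ref{lemaB}, so that the relevant partial derivatives are meaningful (at least one-sided) even where $P(\Phi,T)$ is not classically defined. Then for a point $(0,N,0)\in P_2$ with $N>0$, parametrize the curve $s\mapsto(0,s,0)$ of equilibria through it and differentiate the identity $f_i(0,s,0)=0$ in $s$, obtaining $\partial_N f_i(0,N,0)=0$ for $i=1,2,3$. Thus the second column of the Jacobian vanishes at $(0,N,0)$ and $e_2=(0,1,0)$ is a null eigenvector. The analogous parametrization $s\mapsto(0,0,s)$ of $P_3$ shows that at every $(0,0,\Phi)$ with $\Phi>0$ the third column vanishes, so $e_3=(0,0,1)$ is a null eigenvector.

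For the vertex $P_1=(0,0,0)$, where $P(\Phi,T)$ is not classically defined, I would compute one-sided partial derivatives in the positive octant using the $B$, $D$ representation: the terms in $f_1$ and $f_3$ carrying a factor of $D$ or $B$ contribute nothing to the columns associated with pure $N$ or pure $\Phi$ perturbations, so $e_2$ (and $e_3$) still lies in the kernel of the one-sided Jacobian at $P_1$. Alternatively, one can obtain the same conclusion by a continuity/limiting argument, letting a point on $P_2$ (or $P_3$) tend to the origin and passing the identity to the limit.

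The main obstacle I anticipate is precisely the regularity at $P_1$: because $P(\Phi,T)$ has no limit at the origin, the Jacobian in the classical sense does not exist and one must argue either through the one-sided representation via $B$, $D$ or by a limiting procedure from adjacent equilibria in $P_2\cup P_3$. For interior points of $P_2$ and $P_3$, by contrast, the tangent-vector argument is immediate and requires no further care.
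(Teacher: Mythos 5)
The paper states this as a bare remark and offers no proof of its own, so there is nothing to compare your argument against; taken on its own terms, your tangent-vector argument is the right idea and does establish the claim. Differentiating $f_i(0,s,0)=0$ and $f_i(0,0,s)=0$ along the curves $P_2$ and $P_3$ indeed forces the corresponding column of the Jacobian to vanish, so $e_2$ (resp. $e_3$) lies in the kernel of whatever linearisation exists, and the eigenvalue $0$ makes Hartman--Grobman inapplicable. One can also confirm this by direct computation: writing $f_1=\rho\,D\left(1-\tfrac{T+N+\Phi}{K}\right)-\alpha B-\beta_1 NT$ etc., every $N$-partial at $(0,N,0)$ carries a factor $T$, $\Phi$, $B(0,0)$ or $D(0,0)$, all of which vanish.

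The one inaccuracy is where you locate the regularity problem. You say that interior points of $P_2$ ``require no further care'' and that only $P_1$ is delicate because $P(\Phi,T)$ is undefined at the origin. But every point of $P_2$ has $(T,\Phi)=(0,0)$: the functions $B$ and $D$ of Lemma \ref{lemaB} are only globally Lipschitz there, not differentiable (e.g.\ $D(s,s)=s/2$ while $D(s,0)=D(0,s)=0$, so $D$ has vanishing partials at the origin but a nonzero derivative along the diagonal), so the classical Jacobian of $(f_1,f_2,f_3)$ fails to exist on all of $P_1\cup P_2$, not only at $P_1$. This does not sink your conclusion — the $\partial_N$ column consists of genuine partial derivatives (the $N$-dependence of the $f_i$ is polynomial) and vanishes, and the failure of differentiability in the remaining directions only reinforces that linearisation is uninformative there — but the statement that $P_2$ needs no care is wrong, and a careful write-up should either work with one-sided derivatives on the octant $T,N,\Phi\ge 0$ throughout $P_1\cup P_2$, or note explicitly that only the $N$-column is needed for the kernel claim. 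At $P_3$ (where $\Phi>0$) the one-sided Jacobian from $\{T\ge0\}$ does exist and your argument applies as stated.
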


Now, we consider the differential equation for the sum $S=T+N+\Phi$, which satisfies

\begin{equation}\label{eqSuma}
\left\{
\begin{array}{l}
\dfrac{d\;S}{d t}=\underbrace{\left(\rho\;T\;P\left(\Phi,T\right)+\dfrac{\gamma\;\Phi}{K}\;T\;\sqrt{1-\left(P\left(\Phi,T\right)\right)^2}\right)}_{=0\text{  if  } T=0\text{  or  } \Phi=0}\left(1-\dfrac{S}{K}\right),\\
\\
S\left(0\right)=S_0:=T_0+N_0+\Phi_0.
\end{array}
\right.
\end{equation}

Hence, we see that $S\left(t\right)$ is increasing if $S_0< K$, and, $S\left(t\right)\nearrow S_*\leq K$ as $t\rightarrow+\infty$. On the other hand, if $S_0=K$, then $S\left(t\right)=K\quad\forall t\in\left[0,+\infty\right)$. Finally, if $S_0>K$ then $S\left(t\right)$ is decreasing and $S\left(t\right)\searrow S_*\leq K$ as $t\rightarrow+\infty$. For brevity, we only study the case $S_0\le K$. 
\\

We show two particular cases:
\begin{itemize}
	\item If we consider $T_0=0$ then $T(t)=0$ $\forall$ $t\ge 0$. It implies from $\left(\ref{eqSuma}\right)$ that $\dfrac{d\;S\left(t\right)}{dt}=0$ $\forall t>0$. Hence $S\left(t\right)=N_0+\Phi_0$ $\forall t>0$. In terms of the subsystem $\left(N,\Phi\right)$, we obtain that 
	$$\left\{\begin{array}{l}
		\dfrac{d\;N}{d t}=\beta_2\;N\;\Phi,\\
		\\
		\dfrac{d\;\Phi}{d t}=-\beta_2\;N\;\Phi,
	\end{array}	\right.$$
	with $N\left(0\right)=N_0\ge0$ and $\Phi\left(0\right)=\Phi_0\geq0$. Hence $N\left(t\right)\nearrow N_*$ and, $\Phi\left(t\right)\searrow0$ with $N_*=N_0+\Phi_0$. 
	\item  If we consider $\Phi_0=0$ then $\Phi\left(t\right)=0$ $\forall t>0$.  Hence, from $\left(\ref{eqSuma}\right)$, $S\left(t\right)=T_0+N_0$ for all $t>0$. Since $N\left(t\right)\nearrow N_*$ then, $T\left(t\right)\searrow 0$ with $N_*=N_0+T_0$.
\end{itemize}

In order to study the stability of $\left(\ref{SDO}\right)$, we use the properties of omega limit sets. Given $y_0=\left(T_0,N_0,\Phi_0\right)\in\mathbb{R}_{+}^{3}$ such that $T_0+N_0+\Phi_0\le K$, then there exists a unique solution  $y\left(t\right)=\left(T,N,\Phi\right)\left(t\right)\in\mathbb{R}^3_{+}$ $\forall t\in\left[0,+\infty\right)$ of $\left(\ref{SDO}\right)$-$\left(\ref{inicioSDO}\right)$ such that $T(t)+N(t)+\Phi(t)\le K$. Therefore the corresponding $\omega$-limit set is defined by

$$\omega\left(y_0\right)=\{y_*\in\mathbb{R}^3_{+},\;\exists\;t_n\rightarrow\infty\;:y\left(t_n\right)\rightarrow y_*\text{  in  }\mathbb{R}^3\}$$
and is a nonempty compact and invariant set of $\mathbb{R} ^3_+$. Since $0<S_0\leq K$ then $S\left(t\right)\nearrow S_*\leq K$ and $N\left(t\right)\nearrow N_*\leq S_*$ for $t\rightarrow\infty$, where $S_*=S_*\left(S_0\right)$ and $N_*=N_*\left(T_0,N_0,\Phi_0\right)$, because both functions are increasing and bounded from above. Therefore,

\begin{equation}\label{omegalimite}
\small\omega\left(T_0,N_0,\Phi_0\right)\subseteq\left\{\left(\widetilde{T},N_*,S_*-N_*-\widetilde{T}\right),\;\widetilde{T}\in\big[0,S_*-N_*\big]\right\}.
\end{equation}


\begin{teo}
	Given $y_0=\left(T_0,N_0,\Phi_0\right)\in\mathbb{R}_{+}^{3}$ and $S_0=T_0+N_0+\Phi_0\leq K$. If $y_0\neq\left(0,0,\Phi_0\right)$ with $\Phi_0\geq0$, then the $\omega$-limit set is a unitary set
	
	$$\omega\left(T_0,N_0,\Phi_0\right)=\Big\{\left(0,N_*,0\right)\Big\}.$$
	
\end{teo}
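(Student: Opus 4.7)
The strategy is to combine the characterization \eqref{omegalimite} of $\omega(y_0)$ with the positive invariance of $\omega$--limit sets. Because the right--hand side of \eqref{SDO} is continuous (Lemma \ref{lemafi}) and every orbit issued from $y_0$ with $S_0\le K$ stays inside the compact set $\{(T,N,\Phi)\in\mathbb{R}^3_{+}:T+N+\Phi\le K\}$, the set $\omega(y_0)$ is nonempty, compact and positively invariant under the flow of \eqref{SDO}. Fix any $y^{*}=(T^{*},N_{*},\Phi^{*})\in\omega(y_0)$ and let $(T,N,\Phi)(t)$ denote the solution of \eqref{SDO} starting at $y^{*}$. By invariance, $(T(t),N(t),\Phi(t))\in\omega(y_0)$ for all $t\ge 0$, and combined with \eqref{omegalimite} this forces $N(t)\equiv N_{*}$ along this trajectory; in particular $N'(t)\equiv 0$.

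The key observation is that every summand of $f_2$ is nonnegative on $\mathbb{R}^3_{+}$. Writing
$$
0\;\equiv\; f_2(T(t),N_{*},\Phi(t))\;=\;\alpha\,T\sqrt{1-P(\Phi,T)^2}+\beta_1 N_{*} T+\delta\,T\,\Phi+\beta_2 N_{*}\Phi,
$$
each of the four terms must vanish identically. Evaluating at $t=0$, the two terms $\beta_1 N_{*}T^{*}$ and $\beta_2 N_{*}\Phi^{*}$ yield, provided $N_{*}>0$, the conclusion $T^{*}=\Phi^{*}=0$. Hence $y^{*}=(0,N_{*},0)$, and since this holds for every $y^{*}\in\omega(y_0)$ we conclude $\omega(y_0)=\{(0,N_{*},0)\}$.

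It remains to rule out $N_{*}=0$, which is precisely where the hypothesis $y_0\neq(0,0,\Phi_0)$ is used. If $N_0>0$, the monotonicity $N(t)\nearrow N_{*}$ already gives $N_{*}\ge N_0>0$. Otherwise $N_0=0$ and the hypothesis forces $T_0>0$; in that case $P(\Phi_0,T_0)=\Phi_0/(\Phi_0+T_0)<1$ and
$$
N'(0)=\alpha\,T_0\sqrt{1-P(\Phi_0,T_0)^2}+\delta\,T_0\Phi_0\;>\;0,
$$
so $N(t_1)>0$ for some small $t_1>0$ and again $N_{*}\ge N(t_1)>0$. The step that requires most care is the invariance argument — legitimately propagating the identity $N\equiv N_{*}$ to trajectories issued from arbitrary points of $\omega(y_0)$ — but once that is in place, the sign structure of $f_2$ closes the argument without any further estimate.
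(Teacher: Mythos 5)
Your proof is correct and follows essentially the same route as the paper: invariance of $\omega\left(y_0\right)$ combined with $\left(\ref{omegalimite}\right)$ forces $N\equiv N_*$ along any trajectory issued from a point of the $\omega$-limit set, and then the nonnegativity of the terms of $f_2$ on $\mathbb{R}^3_+$ kills $T$ and $\Phi$. If anything you are more complete than the paper, which only deduces $T_p\equiv 0$ from $0=N_p'\ge\beta_1 T_p N_p$ and then invokes the classification of equilibria to conclude $\Phi=0$, and which takes $N_*>0$ for granted, whereas you read $\Phi^*=0$ directly off the term $\beta_2 N_*\Phi$ and explicitly verify $N_*>0$ in the case $N_0=0$, $T_0>0$ via the hypoxia term.
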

\begin{obs}
	If $y_0\notin P_1\cup P_3$, then $\omega\left(y_0\right)$ is unitary and belongs to $P_2$.
\end{obs}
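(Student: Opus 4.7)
My plan is to combine the inclusion $(\ref{omegalimite})$ with the invariance of the $\omega$-limit set under the flow of $(\ref{SDO})$, using the sign structure of $f_2$ as the key tool. First I would observe that since every solution of $(\ref{SDO})$-$(\ref{inicioSDO})$ with $S_0\le K$ stays in the compact box $\{0\le T,N,\Phi,\; T+N+\Phi\le K\}$, the set $\omega(y_0)$ is non-empty, compact and positively invariant. Picking any $y^* = (\widetilde T, N_*, S_* - N_* - \widetilde T) \in \omega(y_0)$ provided by $(\ref{omegalimite})$, invariance forces the solution $(T,N,\Phi)(t)$ of $(\ref{SDO})$ issued from $y^*$ to remain inside $\omega(y_0)$ for all $t\ge 0$; since every point of that set has second coordinate equal to $N_*$, the $N$-component of this orbit is identically $N_*$, so $N'(t)\equiv 0$ along it.

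The next step is to read off the consequence from the second equation of $(\ref{SDO})$: along that orbit,
$$0 = \alpha\, T\,\sqrt{1 - P(\Phi,T)^2} + \beta_1\, N_*\, T + \delta\, T\, \Phi + \beta_2\, N_*\, \Phi.$$
All four summands are non-negative on $\mathbb{R}^3_+$, hence each must vanish identically. Provided $N_*>0$, the identities $\beta_1 N_* T = 0$ and $\beta_2 N_* \Phi = 0$ immediately yield $T\equiv 0$ and $\Phi\equiv 0$ along the orbit, so $y^* = (0,N_*,0)$. Since $y^*$ was arbitrary in $\omega(y_0)$, this gives $\omega(y_0) = \{(0,N_*,0)\}$.

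The remaining, and what I expect to be the main, obstacle is certifying $N_*>0$, which is precisely where the hypothesis $y_0\notin P_1\cup P_3$ is used. If $N_0>0$ this follows from the monotonicity $N(t)\nearrow N_*$ (consequence of $f_2\ge 0$ on $\mathbb{R}^3_+$), so $N_*\ge N_0>0$. If $N_0=0$, the hypothesis forces $T_0>0$; using that $P(\Phi_0,T_0)<1$ whenever $T_0>0$, a direct evaluation gives
$$f_2(T_0,0,\Phi_0) = \alpha\, T_0\,\sqrt{1 - P(\Phi_0,T_0)^2} + \delta\, T_0\, \Phi_0 > 0,$$
so $N(t)$ leaves zero instantaneously and $N_*$ is positive by monotonicity. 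The bulk of the argument is thus the sign analysis of $f_2$ combined with the invariance principle; the rest (existence and non-emptiness of $\omega(y_0)$, invariance, and the use of $(\ref{omegalimite})$) is routine.
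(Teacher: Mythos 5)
Your proposal is correct and follows essentially the same route as the paper: combine the inclusion $\left(\ref{omegalimite}\right)$ with the invariance of $\omega\left(y_0\right)$ to force $\dfrac{d}{dt}N\equiv 0$ along any orbit inside the limit set, then use that $f_2$ is a sum of non-negative terms to kill $T$ and $\Phi$. Your treatment is in fact slightly more complete, since you justify $N_*>0$ in the case $N_0=0$, $T_0>0$ via $f_2\left(T_0,0,\Phi_0\right)>0$ (a point the paper leaves implicit) and you obtain $\Phi\equiv0$ directly from the vanishing of $\beta_2 N_* \Phi$ rather than by identifying $p$ as an equilibrium of type $P_2$.
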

\begin{proof}
	
Let $\left(T_p,N_p,\Phi_p\right)$ be the solution starting from a point $$p=\left(\widetilde{T},N_*,S_*-N_*-\widetilde{T}\right)\in\omega\left(T_0,N_0,\Phi_0\right).$$

Since $\omega\left(T_0,N_0,\Phi_0\right)$ is an invariant set, it holds that $N_p\left(t\right)=N_*$ $\forall t$. Hence $\dfrac{d}{dt}N_p=0$. Now, from the $N_p$ equation,

\begin{equation}\label{eq2}
0=\dfrac{d}{dt}N_p=T_p\left(\alpha\;\sqrt{1-\left(\dfrac{\Phi_p}{\Phi_p+T_p}\right)^2}+\beta_1\; N_p\right) +\Phi_p\left(\delta\;T_p +\beta_2\;N_p\right)\geq \beta_1\;T_p\;N_p.
\end{equation}

Hence $T_p\left(t\right)=0$ for all $t\geq0$ and $\widetilde{T}=0$. Then, $p=\left(0,N_*,S_*-N_*\right)$. 
\\

Since in particular $p$ is a equilibrium point and $N_*>0$, then $p$ must be a point of type $P_2$, hence $p=\left(0,N_*,0\right)$. In particular, $N_*=S_*$.
\end{proof}

As consequence of this result, we deduce:

\begin{col}
	$P_3$ is a continuum of unstable equilibria. Indeed, for any $\left(T_0,N_0,\Phi_0\right)$ with $T_0>0$ or $N_0>0$, then its solution satisfies
	$$\left(T\left(t\right),N\left(t\right),\Phi\left(t\right)\right)\rightarrow\left(0,N_*,0\right)\;\;\text{as}\;\; t\to\infty$$
	with $N_*\geq T_0+N_0+\Phi_0$.
\end{col}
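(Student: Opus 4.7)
The plan is to derive both claims directly from the preceding theorem together with the monotonicity properties already established for the sum $S=T+N+\Phi$ and for the component $N$.

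For the convergence statement, I would first observe that the hypothesis ``$T_0>0$ or $N_0>0$'' is exactly the condition $y_0\neq (0,0,\Phi_0)$ of the preceding theorem; its conclusion $\omega(y_0)=\{(0,N_*,0)\}$ therefore applies. Since the trajectory stays in the compact invariant region $\{y\in\mathbb{R}^3_+:T+N+\Phi\le K\}$ and its $\omega$-limit set is reduced to a single point, one obtains $(T(t),N(t),\Phi(t))\to (0,N_*,0)$ as $t\to\infty$ in the classical dynamical-systems sense. For the lower bound $N_*\ge T_0+N_0+\Phi_0$, I would combine two facts already at hand: the scalar analysis of equation $(\ref{eqSuma})$ under $S_0\le K$ gives $S(t)\nearrow S_*$, and the final line in the proof of the previous theorem identifies $N_*=S_*$. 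Putting these together yields $N_*=S_*\ge S(0)=T_0+N_0+\Phi_0$.

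For the instability claim, I would fix an arbitrary equilibrium $p=(0,0,\bar\Phi)\in P_3$ with $\bar\Phi>0$ and show that $p$ fails to be Lyapunov stable. Set $\varepsilon_0:=\bar\Phi/2$, and for any small $\delta>0$ consider the perturbed initial datum $y_0=(\delta/2,0,\bar\Phi-\delta/2)$, which satisfies $|y_0-p|=\delta/\sqrt{2}<\delta$ and, crucially, $S_0=\bar\Phi\le K$ (the mass is redistributed between $\Phi$ and $T$ so that the admissible region is not left). Since $T_0=\delta/2>0$, the convergence part of the corollary applies, producing a limit $(0,N_*,0)$ with $N_*\ge\bar\Phi>0$. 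Because this limit point sits at Euclidean distance at least $\bar\Phi$ from $p$, for $t$ large enough one has $|y(t)-p|\ge \bar\Phi/2=\varepsilon_0$, which is the negation of Lyapunov stability.

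The main obstacle here is very mild: the entire corollary is essentially a repackaging of the preceding theorem together with the scalar monotone analysis of $S(t)$ under $S_0\le K$. The one small point requiring care is choosing, in the instability argument, a perturbation simultaneously close to $p$ and inside the working invariant region $\{S\le K\}$; this is precisely why the perturbation is taken to preserve $S_0$ by trading mass between $\Phi$ and $T$ rather than simply adding a tumor component.
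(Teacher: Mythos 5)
Your argument is correct and follows the same route the paper intends: the paper states this corollary without a written proof, treating it as an immediate consequence of the preceding theorem (whose conclusion $\omega(y_0)=\{(0,N_*,0)\}$ with $N_*=S_*$, combined with the monotonicity $S(t)\nearrow S_*$ for $S_0\le K$, gives both the convergence and the bound $N_*\ge T_0+N_0+\Phi_0$). Your explicit Lyapunov-instability argument via the mass-preserving perturbation $(\delta/2,0,\bar\Phi-\delta/2)$ is a correct and careful way to make precise what the paper leaves implicit.
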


\subsection{Stability of the  Diffusion Model $\boldsymbol{\left(\ref{probOriginal}\right)$-$\left(\ref{condinicio}\right)}$}

In this section, we study the stability of the constant equilibria of $\left(\ref{probOriginal}\right)$-$\left(\ref{condinicio}\right)$ to spatio-temporal perturbations for $t\rightarrow+\infty$. The constant solutions of $\left(\ref{probOriginal}\right)$-$\left(\ref{condinicio}\right)$ are the same of $\left(\ref{SDO}\right)$-$\left(\ref{inicioSDO}\right)$ given in $\left(\ref{solequilibrio}\right)$. In this case, the main difference is that we do not have a differential problem for $S\left(t\right)$ as in $\left(\ref{eqSuma}\right)$. Before showing the results we will discuss two remarks.

\begin{obs}
	The condition $N_0(x)>0$ for $x\in \overline{\Omega}$ used in the followings results can be relaxed by $N(t_*,x)>0$ for some $t_*>0$ since $N\left(\cdot,x\right)$ is increasing in time. On the other hand, applying the strong maximum principle to the parabolic problem that satisfies the tumor variable $T$ (since the reaction can be rewrite as $f_1(T,N,\Phi)=T\; \widetilde{f}_1(T,N,\Phi)$ with $\widetilde{f}_1$ bounded), it holds that $T(t_1,x)>0$ for any $t_1>0$ and for all $x\in \overline{\Omega}$. Due to the hypoxia term, in particular one has $\dfrac{\partial\;N}{\partial\;t}\geq \alpha\;T\;\sqrt{1-P^2\left(\Phi,T\right)}>0$ hence $N\left(t_*,x\right)>0$ for any $t_*>t_1$ and for all $x\in \overline{\Omega}$.
\end{obs}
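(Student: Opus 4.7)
The remark makes three linked claims about any classical solution satisfying the pointwise bounds of Lemma \ref{estimaciones}, and the plan is to justify each in turn using only the algebraic structure of the reactions in (\ref{funciones}) and those bounds.

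First, for the claim that $N(\cdot,x)$ is nondecreasing, I inspect $f_2$ and use $T,N,\Phi\geq 0$ from Corollary \ref{Equiv} to write
\begin{equation*}
\partial_t N = \alpha\,T\sqrt{1-P(\Phi,T)^2} + \beta_1\,N\,T + \delta\,T\,\Phi + \beta_2\,N\,\Phi \geq 0
\end{equation*}
pointwise on $[0,T_f]\times\overline{\Omega}$. Hence $N(t,x)$ is monotone nondecreasing in $t$ for each fixed $x$, so if $N(t_*,x)>0$ at some $t_*>0$ then $N(t,x)>0$ for all subsequent $t\geq t_*$, which gives the relaxation stated in the first sentence.

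Second, for the strict positivity $T(t_1,x)>0$ for every $t_1>0$ and every $x\in\overline{\Omega}$, I factor
\begin{equation*}
f_1(T,N,\Phi) = T\,\widetilde{f}_1(T,N,\Phi),\qquad \widetilde{f}_1 := \rho\,P(\Phi,T)\left(1-\dfrac{T+N+\Phi}{K}\right) - \alpha\sqrt{1-P(\Phi,T)^2} - \beta_1\,N,
\end{equation*}
which is well-defined along the classical solution because the potentially singular factors only appear multiplied by $T$, in the spirit of the continuous functions $B$ and $D$ of Lemma \ref{lemaB}. By the pointwise bounds, $\widetilde{f}_1\in L^{\infty}((0,T_f)\times\Omega)$, so $T$ satisfies the linear parabolic equation $T_t-\Delta T - \widetilde{f}_1\,T = 0$ with homogeneous Neumann condition and nonnegative initial datum $T_0\not\equiv 0$. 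The strong maximum principle for parabolic equations with a bounded zero-order coefficient, combined with Hopf's lemma at $\partial\Omega$, then yields $T(t_1,x)>0$ on $\overline{\Omega}$ for every $t_1>0$.

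Third, once $T>0$ on $(0,T_f]\times\overline{\Omega}$ is secured, I note that $T(t,x)>0$ forces $P(\Phi,T)=\Phi/(\Phi+T)<1$, so the hypoxia contribution $\alpha\,T\sqrt{1-P^2}$ is strictly positive there. Integrating the pointwise inequality $\partial_t N\geq \alpha\,T\sqrt{1-P^2}>0$ from $t_1$ to any $t_*>t_1$ gives $N(t_*,x)>0$ uniformly on $\overline{\Omega}$. The main obstacle is the technical application of the strong maximum principle up to the boundary: one needs both the $L^{\infty}$ bound on $\widetilde{f}_1$ (supplied by Lemma \ref{estimaciones}) and the regularity $T\in V_p$ with $p>3$, which provides enough smoothness to invoke Hopf's lemma on the Neumann boundary. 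The only implicit hypothesis is $T_0\not\equiv 0$; otherwise the tumor vanishes identically and the remark is vacuous.
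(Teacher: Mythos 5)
Your proposal is correct and follows essentially the same route as the paper's own inline justification: monotonicity of $N$ from the sign of $f_2$, strict positivity of $T$ via the strong maximum principle applied to $T_t-\Delta T=\widetilde{f}_1\,T$ with $\widetilde{f}_1$ bounded, and then strict positivity of $N$ from the hypoxia term. The extra details you supply (the $L^\infty$ bound on $\widetilde{f}_1$ from Lemma \ref{estimaciones}, Hopf's lemma at the Neumann boundary, and the implicit hypothesis $T_0\not\equiv 0$) are consistent with, and slightly more careful than, the paper's sketch.
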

We introduce some results of pointwise and uniform convergence as time goes to infinity. First of all, we will see that vasculature always goes to zero.

\begin{lem}\label{Fi_0}
	Given a solution $\left(T,N,\Phi\right)$ of $\left(\ref{probOriginal}\right)$-$\left(\ref{condinicio}\right)$, then for each $x\in\overline{\Omega}$ such that $N_0\left(x\right)>0$ one has $\Phi\left(t,x\right)\rightarrow0$ when $t\rightarrow+\infty$.
	
\end{lem}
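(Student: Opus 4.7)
My plan is to work pointwise in $x\in\overline{\Omega}$, since the $\Phi$-equation has no diffusion and becomes an ODE for each fixed $x$. Writing the third equation of $\left(\ref{probOriginal}\right)$ as
$$\partial_t \Phi = \Phi\cdot g(T,N,\Phi),\qquad g=\gamma\dfrac{T}{K}\sqrt{1-P(\Phi,T)^2}\Bigl(1-\dfrac{T+N+\Phi}{K}\Bigr)-\delta\,T-\beta_2\,N,$$
I would first use the pointwise bounds $\left(\ref{cotas}\right)$ (namely $0\le T\le K$, $0\le \Phi\le K$) together with $0\le\sqrt{1-P^2}\le1$ to obtain the universal estimate $g(t,x)\le\gamma-\beta_2 N(t,x)$. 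The sign analysis splits trivially: when $1-S/K\ge0$ the gain is bounded by $\gamma$, and when $1-S/K<0$ the gain is non-positive. In parallel, from $f_2\ge0$ I read off that $N(\cdot,x)$ is non-decreasing, so with $N_0(x)>0$ one has $N(t,x)\ge N_0(x)>0$ for every $t\ge0$.

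Next I would do a case distinction on the monotone limit $N_\infty(x):=\lim_{t\to\infty}N(t,x)\in[N_0(x),+\infty]$.

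\textbf{Case A: }$N_\infty(x)=+\infty$. Choose $t_0$ with $\beta_2 N(t,x)\ge \gamma+1$ for all $t\ge t_0$. Then $g(t,x)\le -1$, which integrates to $\Phi(t,x)\le \Phi(t_0,x)\,e^{-(t-t_0)}\to0$.

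\textbf{Case B: }$N_\infty(x)<+\infty$. Here the clean exponential bound fails, so I must work harder. Integrating $\partial_t N \ge \beta_2\, N\,\Phi\ge \beta_2 N_0(x)\Phi$ gives
$$\beta_2\,N_0(x)\int_0^\infty \Phi(s,x)\,ds \;\le\; N_\infty(x)-N_0(x)<\infty.$$
Since $N$ is bounded in Case B, $|g(t,x)|\le \gamma+\delta K+\beta_2 N_\infty(x)$, hence $|\partial_t \Phi(t,x)|\le K\cdot(\gamma+\delta K+\beta_2 N_\infty(x))=:M(x)$, so $t\mapsto\Phi(t,x)$ is globally Lipschitz. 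A short Barbalat-type argument then yields $\Phi(t,x)\to0$: otherwise there would exist $\varepsilon>0$ and $t_k\to\infty$ with $\Phi(t_k,x)\ge\varepsilon$; by the Lipschitz property, $\Phi(t,x)\ge\varepsilon/2$ on intervals of length $\varepsilon/(2M(x))$ around each $t_k$, which contradicts integrability of $\Phi(\cdot,x)$ on $(0,\infty)$.

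The main obstacle is Case B: the upper bound $g\le\gamma-\beta_2 N$ gives exponential decay only when $\beta_2 N_0(x)>\gamma$, which is not assumed, so one really must exploit the integrability of $\Phi$ that follows from the convergence of $N(t,x)$ together with the Lipschitz regularity in time to close the argument. Everything else is essentially bookkeeping on the reaction terms of $f_2$ and $f_3$.
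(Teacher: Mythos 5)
Your proposal is correct and follows essentially the same strategy as the paper: a case split on whether $N(\cdot,x)$ becomes large (where $g\le\gamma-\beta_2N$ yields exponential decay of $\Phi$) versus staying bounded (where $\partial_t N\ge\beta_2 N_0(x)\Phi$ gives $\Phi(\cdot,x)\in L^1(0,\infty)$ and a quantitative lower bound on $\Phi$ near times where it is not small forces a contradiction). The only cosmetic differences are your threshold ($N_\infty=+\infty$ versus the paper's $N\ge K$) and your use of a Lipschitz-in-time/Barbalat argument where the paper uses the exponential lower bound $\Phi(t,x)\ge e^{-\mathcal{C}_0(t-t_n)}\Phi(t_n,x)$; just make sure the intervals around the $t_k$ are taken pairwise disjoint before summing.
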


\begin{proof}
Let $x\in\overline{\Omega}$ such that $N_0\left(x\right)>0$. Since $N\left(\cdot,x\right)$ is increasing, then, $0<N_0\left(x\right)\leq N\left(t,x\right)$ for all $t>0$. Now, we separate this proof in two cases depending on the value of $N\left(t,x\right)$:
\begin{enumerate}[a)]
	\item If there exists $t_*>0$ such that $N\left(t,x\right)\geq K$ for all $t\geq t_*$, then we get $f_3\left(T\left(t,x\right),N\left(t,x\right),\Phi\left(t,x\right)\right)\leq-\beta_2\;K\;\Phi\left(t,x\right)$ for all $t\geq t_*$. Hence we have the following 
		\begin{equation}\label{AcotacionF1}
	\left\{\begin{array}{l}
	\dfrac{\partial\Phi}{\partial t}\left(t,x\right)\leq -\beta_2\;K\;\Phi\left(t,x\right)\;\;\text{   in   }\;\;\left[t_*,+\infty\right),\\
	\\
	\Phi\left(t_*,x\right)\ge 0.
	\end{array}\right.
	\end{equation}
	
	Therefore for all $t\geq t_*$,
	$$\Phi\left(t,x\right)\leq\Phi\left(t_*,x\right)\;e^{-\beta_2\;K\;\left(t-t_*\right)}\rightarrow0\qquad\text{as}\;\; t\rightarrow+\infty.$$
	
	\item If $N\left(t,x\right)<K$ for all $t\geq0$ we reason by contradiction. Assume that there exists a sequence $\left\{t_n\right\}_{n\in\mathbb{N}}$ such that $t_n\rightarrow+\infty$ and $t_{n+1}-t_n\geq1$ for all $n\in\mathbb{N}$ and there exists $\eta\left(x\right)$ such that $\Phi\left(t_n,x\right)\geq\eta\left(x\right)>0$ for all $n\in\mathbb{N}$. Since 
	
	\begin{equation}\label{AcotacionN}
	\left\{\begin{array}{l}
	\dfrac{\partial N}{\partial t}\left(t,x\right)\geq \beta_2\;N\left(t,x\right)\;\Phi\left(t,x\right)\;\;\text{   in   }\;\;\left(0,+\infty\right),\\
	\\
	N\left(0,x\right)\ge 0,
	\end{array}\right.
	\end{equation}
	we have the following estimates for $N\left(t,x\right)$.
	
	\begin{equation}\label{sol_N}
	\displaystyle K> N\left(t,x\right)\geq N_0\left(x\right)\;e^{\beta_2\;\int_{0}^{t}\Phi\left(s,x\right)\;ds}.
	\end{equation}
	Since $0\leq T,\;\Phi\leq K$ and $N\left(t,x\right)<K$ for all $t>0$, we get the following lower bound
	
	$$\dfrac{\partial \Phi}{\partial\;t}=f_3\left(T,N,\Phi\right)\geq-\dfrac{\gamma}{K}\;T\;\sqrt{1-P\left(\Phi,T\right)^2}\;\Phi\left(\dfrac{T+\Phi}{K}\right)-\delta\;T\;\Phi-\beta_2\;N\;\Phi\geq$$
	
	$$\geq -2\;\gamma\;\Phi-\delta\;K\;\Phi-\beta_2\;K\;\Phi=-\mathcal{C}_0\;\Phi.$$
	
	Hence,
	$$\Phi\left(t,x\right)\geq e^{-\mathcal{C}_0\left(t-t_n\right)}\Phi\left(t_n,x\right)\geq e^{-\mathcal{C}_0\left(t-t_n\right)}\eta\left(x\right)\;\;\forall t\in\left(t_n,t_{n+1}\right).$$
	Integrating in $\left[t_n,t_{n+1}\right]$ and using that $t_{n+1}-t_n\geq1$
	$$\int_{t_n}^{t_{n+1}}\Phi\left(t,x\right)\;dt\geq\dfrac{\eta\left(x\right)}{\mathcal{C}_0}\left(1-e^{-\mathcal{C}_0\left(t_{n+1}-t_n\right)}\right)\ge \dfrac{\eta(x)}{C_0}(1-e^{-C_0})>0.$$
	
	 Finally, adding all $t_n$
	$$\sum_{n=1}^{+\infty}\int_{t_n}^{t_{n+1}}\Phi\left(t,x\right)\;dt=+\infty.$$
	
	Hence, $\displaystyle\int_{0}^{+\infty}\Phi\left(t,x\right)\;dt=+\infty$ and we arrive at contradiction with $\left(\ref{sol_N}\right)$ and the proof is completed.
	
\end{enumerate}
\end{proof}

As consequence of Lemma $\ref{Fi_0}$, we  deduce:
\begin{col}\label{P3_inestable}
	The equilibria $P_3$ are unstable.
\end{col}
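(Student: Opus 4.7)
The plan is to read off instability of $P_3$ as an immediate consequence of Lemma $\ref{Fi_0}$ by exhibiting, for any fixed equilibrium $(0,0,\Phi^*)$ with $\Phi^*>0$, arbitrarily small perturbations whose solutions leave a fixed neighborhood of that equilibrium. I would work with the natural norm coming from the solution space of Definition $\ref{Defi1}$, namely the $\mathcal{C}^0(\overline\Omega)^3$ supremum norm on each time slice, and declare $(0,0,\Phi^*)$ unstable if there exists $\varepsilon_0>0$ such that for every $\delta>0$ one can find admissible initial data within distance $\delta$ of $(0,0,\Phi^*)$ whose unique classical solution satisfies $\sup_{x\in\overline\Omega}|\Phi(t,x)-\Phi^*|\ge \varepsilon_0$ at some time $t>0$.

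First I would set $\varepsilon_0:=\Phi^*/2$ and, for an arbitrary $\delta\in(0,\Phi^*)$, pick constant-in-$x$ initial data
\[
T_0\equiv 0,\qquad N_0\equiv \min\{\delta,K\}/2,\qquad \Phi_0\equiv\Phi^*,
\]
which satisfy the admissibility assumption $\left(\ref{hipotesis0}\right)$ and lie within distance $\delta$ of $(0,0,\Phi^*)$ in $\mathcal{C}^0(\overline\Omega)^3$. The existence and uniqueness theorem produces a global classical solution $(T,N,\Phi)$ starting from these data; note that no sign issue arises because $N_0(x)>0$ on all of $\overline\Omega$.

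Next I would apply Lemma $\ref{Fi_0}$ pointwise: since $N_0(x)>0$ for every $x\in\overline\Omega$, we obtain $\Phi(t,x)\to 0$ as $t\to+\infty$ for every $x$. In particular, taking any fixed $x_0\in\overline\Omega$, there exists $t^*>0$ with $\Phi(t^*,x_0)<\Phi^*/2$, whence
\[
\bigl\|(T,N,\Phi)(t^*,\cdot)-(0,0,\Phi^*)\bigr\|_{\mathcal{C}^0(\overline\Omega)^3}\ \ge\ |\Phi(t^*,x_0)-\Phi^*|\ >\ \Phi^*/2\ =\ \varepsilon_0,
\]
which proves that $(0,0,\Phi^*)$ is unstable. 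Since $\Phi^*>0$ was arbitrary, the whole continuum $P_3$ consists of unstable equilibria.

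I do not foresee a substantive obstacle: Lemma $\ref{Fi_0}$ already does the analytical work, so the proof really amounts to packaging a suitable perturbation. The only mild point of care is choosing the norm consistently with the functional framework of Definition $\ref{Defi1}$ and observing that the perturbed initial datum lies in the admissible class $\left(\ref{hipotesis0}\right)$ while having $N_0(x)>0$ everywhere, so that Lemma $\ref{Fi_0}$ applies for every $x\in\overline\Omega$.
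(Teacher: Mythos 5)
Your proposal is correct and follows exactly the route the paper intends: Corollary \ref{P3_inestable} is stated there as an immediate consequence of Lemma \ref{Fi_0}, and your write-up simply makes explicit the standard packaging (perturb $N_0$ to be strictly positive, invoke the pointwise decay $\Phi(t,x)\to 0$, and conclude the orbit leaves the ball of radius $\Phi^*/2$ around $(0,0,\Phi^*)$). The only implicit point is that the admissibility assumption $\left(\ref{hipotesis0}\right)$ restricts attention to equilibria with $\Phi^*\le K$, which is consistent with the paper's standing framework.
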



\begin{obs}
	As consequence of $t\mapsto N(t,x)$ is increasing for all $x\in\overline{\Omega}$, we deduce that $P_1$ is not asymptotically stable.
\end{obs}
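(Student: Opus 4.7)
The plan is to exhibit, for every neighbourhood of $P_1=(0,0,0)$, initial data inside that neighbourhood whose associated classical solution of $\left(\ref{probOriginal}\right)$-$\left(\ref{condinicio}\right)$ fails to converge to $(0,0,0)$ as $t\to+\infty$. The key input is the monotonicity of $N$ in time, which itself follows from the sign of $f_2$.

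First I would note that, by the structure of $f_2$ in $\left(\ref{funciones}\right)$, every term of $f_2\left(T,N,\Phi\right)$ is a product of non-negative factors on the invariant region $\{T,N,\Phi\ge 0\}$ (using $0\le P(\Phi,T)\le 1$). Hence $\partial_t N=f_2\left(T,N,\Phi\right)\ge 0$, so $t\mapsto N(t,x)$ is non-decreasing for every $x\in\overline{\Omega}$, as already recorded in the excerpt.

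Next I would pick, for an arbitrary $\varepsilon\in(0,K)$, the initial data $\left(T_0,N_0,\Phi_0\right)=(0,\varepsilon,0)$, which satisfies $\left(\ref{hipotesis0}\right)$ and lies in any prescribed neighbourhood of $P_1$ (for instance in $\mathcal{C}^0\left(\overline{\Omega}\right)^3$ or in $W_n^{2-2/p,p}\left(\Omega\right)\times\mathcal{C}^0(\overline{\Omega})^2$) by taking $\varepsilon$ small. The unique classical solution associated to this datum is the constant triple $(T,N,\Phi)(t,x)\equiv(0,\varepsilon,0)$: indeed the pair $(T,\Phi)\equiv(0,0)$ satisfies the first and third equations together with the corresponding boundary/initial data, and with $T=\Phi=0$ one has $f_2(0,N,0)=0$, so $N\equiv\varepsilon$ solves the ODE for $N$; uniqueness (Theorem \ref{unicidad}) then identifies this triple as the solution. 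Consequently
\[
\left\|\left(T,N,\Phi\right)(t,\cdot)-(0,0,0)\right\|_{\mathcal{C}^0(\overline{\Omega})^3}=\varepsilon\quad\forall\,t\ge 0,
\]
so this solution does not converge to $P_1$ as $t\to+\infty$.

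More generally, the same conclusion follows from the monotonicity alone: for any initial data with $N_0(x_0)>0$ at some point $x_0\in\overline{\Omega}$, the non-decreasing character of $t\mapsto N(t,x_0)$ forces $N(t,x_0)\ge N_0(x_0)>0$ for all $t\ge 0$, preventing convergence of the solution to $(0,0,0)$ in any norm that controls the pointwise value at $x_0$. Hence $P_1$ cannot be asymptotically stable. There is no real obstacle here; the only point requiring a little care is making explicit the topology in which (non-)stability is being asserted, which is the natural one inherited from the solution space introduced in Definition \ref{Defi1}.
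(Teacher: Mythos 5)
Your proposal is correct and follows essentially the same route as the paper: the remark is stated there without a written proof, being deduced directly from the monotonicity of $t\mapsto N(t,x)$, which is exactly the argument you make precise (and your explicit family of nearby equilibria $(0,\varepsilon,0)\in P_2$ is just a concrete instance of it). No gaps.
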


In the following result, adding a constraint on some parameters of the problem, we can deduce the behaviour of the solution of the system $\left(\ref{probOriginal}\right)$-$\left(\ref{condinicio}\right)$ as $t\rightarrow+\infty$.

\begin{lem}\label{lemaestabilidad1}
	Given a classical solution $\left(T,N,\Phi\right)$ of $\left(\ref{probOriginal}\right)$-$\left(\ref{condinicio}\right)$ such that $N_0\left(x\right)>0$ for all $x\in\overline{\Omega}$ and assume that
		
	\begin{equation}\label{CondiNlejosKdelta}
	\delta\ge\dfrac{\gamma}{K}.
	\end{equation}
	Then, for all $t\geq0$:
	
	$$\|\Phi\left(t,\cdot\right)\|_{\mathcal{C}^0\left(\overline{\Omega}\right)}\leq \|\Phi_0\|_{\mathcal{C}^0\left(\overline{\Omega}\right)}\;e^{-\beta_2\;N_{0}^{\min}\;t},$$ 
	where $\displaystyle N_{0}^{\min}=\min_{x\in\overline{\Omega}} N_0\left(x\right)$. In addition, there exists $\mu\in\left(0,\min\left\{\beta_1,\;\beta_2\right\}N_{0}^{\min}\right)$ such that

	$$\| T\left(t,\cdot\right)\|_{\mathcal{C}^0\left(\overline{\Omega}\right)}\leq M\;e^{-\mu\;t},\;\;\forall t\geq0$$	
	with $M=\max\left\{\| T_0\|_{\mathcal{C}^0\left(\overline{\Omega}\right)},\;\dfrac{\rho\;\| \Phi_0\|_{\mathcal{C}^0\left(\overline{\Omega}\right)}}{\beta_1\;N_{0}^{\min}-\mu}\right\}>0$. Moreover, there exists $N_{\max}>0$ such that
  
    $$N\left(t,x\right)\leq N_{\max}\;\;\forall \left(t,x\right)\in\left[0,+\infty\right)\times\overline{\Omega}.$$
    
%
\end{lem}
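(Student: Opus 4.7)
The plan is to treat the three bounds in order: first $\Phi$, then $T$, then $N$, since each bound feeds into the next.

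\textbf{Step 1 (decay of $\Phi$).} I would use the ODE for $\Phi$ written as
$$\frac{\partial\Phi}{\partial t}=\Phi\left[\frac{\gamma\,T}{K}\sqrt{1-P^2}\Bigl(1-\frac{T+N+\Phi}{K}\Bigr)-\delta\,T-\beta_2\,N\right].$$
Using $\sqrt{1-P^2}\le1$ and $1-(T+N+\Phi)/K\le1$, the first term is bounded above by $\tfrac{\gamma}{K}T\Phi$. Hypothesis $\left(\ref{CondiNlejosKdelta}\right)$ then yields $\tfrac{\gamma}{K}T\Phi\le\delta\,T\,\Phi$, so the bracket is at most $-\beta_2 N$. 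Since $N(\cdot,x)$ is increasing in $t$ (as noticed in the remark above) and $N(t,x)\ge N_0(x)\ge N_0^{\min}$, we get pointwise in $x$ the differential inequality $\partial_t\Phi\le-\beta_2\,N_0^{\min}\,\Phi$, and Gronwall gives the claimed exponential bound.

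\textbf{Step 2 (decay of $T$).} Now I would compare $T$ to the supersolution $\overline{T}(t)=M e^{-\mu t}$. Using $P\,T=\Phi T/(\Phi+T)\le\Phi$ one controls $\rho\,P(\Phi,T)\,T\bigl(1-\tfrac{T+N+\Phi}{K}\bigr)\le \rho\,\Phi$, and dropping the nonpositive hypoxia term yields
$$\partial_t T-\Delta T+\beta_1 N_0^{\min}\,T\le\rho\,\Phi\le\rho\,\|\Phi_0\|_{\mathcal{C}^0(\overline\Omega)}\,e^{-\beta_2 N_0^{\min} t}.$$
The constant supersolution $\overline T(t)$ (with zero Neumann data) satisfies
$$\partial_t\overline T-\Delta\overline T+\beta_1 N_0^{\min}\overline T=(\beta_1 N_0^{\min}-\mu)M\,e^{-\mu t}.$$
Choosing $\mu\in(0,\min\{\beta_1,\beta_2\}N_0^{\min})$ guarantees $e^{-\mu t}\ge e^{-\beta_2 N_0^{\min}t}$, so the pointwise inequality between the two right-hand sides is satisfied provided $M\ge \rho\,\|\Phi_0\|_{\mathcal{C}^0(\overline\Omega)}/(\beta_1 N_0^{\min}-\mu)$; taking the max with $\|T_0\|_{\mathcal{C}^0(\overline\Omega)}$ handles the initial datum. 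The parabolic comparison principle (applicable thanks to the classical regularity established in Section~\ref{solocion}) then delivers $T\le\overline T$. This step is the most delicate: the key trick is the bound $P\,T\le\Phi$ together with the correct range for $\mu$, which ensures that the forcing from $\Phi$ is absorbed by the supersolution.

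\textbf{Step 3 (uniform bound for $N$).} Finally I would plug the previous two exponential bounds into the ODE for $N$. Using $\sqrt{1-P^2}\le1$ and the bounds $T(t,x)\le M e^{-\mu t}$, $\Phi(t,x)\le\|\Phi_0\|_{\mathcal{C}^0(\overline\Omega)}e^{-\beta_2 N_0^{\min}t}$,
$$\frac{\partial N}{\partial t}\le a(t)+b(t)\,N,$$
where $a(t)=(\alpha+\delta\,\|\Phi_0\|_{\mathcal{C}^0(\overline\Omega)}e^{-\beta_2 N_0^{\min}t})M e^{-\mu t}$ and $b(t)=\beta_1 M e^{-\mu t}+\beta_2\|\Phi_0\|_{\mathcal{C}^0(\overline\Omega)}e^{-\beta_2 N_0^{\min}t}$. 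Both $a$ and $b$ are integrable on $[0,+\infty)$, so the variation-of-constants formula (applied for each fixed $x$) gives
$$N(t,x)\le e^{\int_0^{\infty} b}\,\Bigl(N_0(x)+\int_0^{\infty} a\Bigr)=:N_{\max}<+\infty,$$
which is the required uniform bound, independent of $(t,x)$.
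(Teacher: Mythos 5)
Your proposal is correct and follows essentially the same route as the paper: the same estimate $f_3\le(\frac{\gamma}{K}-\delta)T\Phi-\beta_2 N_0\Phi\le-\beta_2 N_0^{\min}\Phi$ for the vasculature, the same comparison of $T$ with the supersolution $M e^{-\mu t}$ via $P(\Phi,T)\,T\le\Phi$ and the choice $\mu<\min\{\beta_1,\beta_2\}N_0^{\min}$, and the same variation-of-constants argument with integrable coefficients for the uniform bound on $N$. No gaps.
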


\begin{proof}
    Using hypothesis $\left(\ref{CondiNlejosKdelta}\right)$ and the bounds $T,\;\Phi\geq0$ and $N\geq N_0$, we can estimate
	
	$$f_3\left(T,N,\Phi\right)\leq\dfrac{\gamma}{K}\;\Phi\;T-\delta\;\Phi\;T-\beta_2\;\Phi\;N\leq-\beta_2\;\Phi\;N_0.$$

	Hence, $\Phi$ satisfies the differential inequality problem
	
		\begin{equation}\label{AcotacionF}
	\left\{\begin{array}{l}
	\dfrac{\partial\Phi}{\partial t}\leq -\beta_2\;\Phi\;N_{0}\left(x\right)\;\;\text{   in   }\;\;\left[0,+\infty\right)\times\overline{\Omega,}\\
	\\
	\Phi\left(0,x\right)=\Phi_0\left(x\right)\leq \| \Phi_0\|_{\mathcal{C}^0\left(\overline{\Omega}\right)}\;\;\text{  in  }\;\;\overline{\Omega},
	\end{array}\right.
	\end{equation}

	Using that $N_{0}^{\min}>0$, we conclude that
	
	\begin{equation}\label{Fi_acotada}
	\Phi\left(t,x\right)\leq\Phi_0\left(x\right)\;e^{-\beta_2\;N_{0}\left(x\right)\;t}\leq \| \Phi_0\|_{\mathcal{C}^0\left(\overline{\Omega}\right)}\;e^{-\beta_2\;N_{0}^{\min}\;t}.
	\end{equation}
	
In particular, $\Phi(t,x)\to 0$ as $t\rightarrow+\infty$ uniformly in $x\in\overline{\Omega}$.
   \\

    Using $\left(\ref{Fi_acotada}\right)$ and the bounds $T,\;\Phi\geq0$, $N\geq N_0$ and $P\left(\Phi,T\right)\; T\le \Phi$, we can estimate $f_1\left(T,N,\Phi\right)$ as follows: 
	$$f_1\left(T,N,\Phi\right)\leq \rho\;P\left(\Phi,T\right)\;T-\alpha\;T\sqrt{1-P^2\left(\Phi,T\right)}-\beta_1\;N\;T\leq$$
	$$\leq\rho\;\Phi-\beta_1\;N\;T\leq\rho\;\| \Phi_0\|_{L^\infty\left(\Omega\right)}\;e^{-\beta_2\;N_{0}^{\min}\;t}-\beta_1\;N_{0}^{\min}\;T.$$

	Therefore, $T\leq S$, where $S$ is the unique solution of the following parabolic problem

	\begin{equation}\label{AcotacionT}
	\left\{\begin{array}{l}
	\dfrac{\partial S}{\partial t}-\Delta\;S= \rho\;\| \Phi_0\|_{\mathcal{C}^0\left(\overline{\Omega}\right)}\;e^{-\beta_2\;N_{0}^{\min}\;t}-\beta_1\;N_{0}^{\min}S\quad\text{  in  }\left(0,+\infty\right)\times\overline{\Omega},\\
	\\
	S\left(0,x\right)= \| T_0\|_{\mathcal{C}^0\left(\overline{\Omega}\right)}\;\;\text{  in  }\Omega,\\
	\\
	\dfrac{\partial S}{\partial n}\Bigg\vert_{\partial\Omega}=0.
	\end{array}
	\right.
	\end{equation}

	Now, we can find a super solution of $\left(\ref{AcotacionT}\right)$ with the form $$\overline{T}\left(t\right)=M\;e^{-\mu\;t}$$
	such that $\min\left\{\beta_1,\;\beta_2\right\}N_{0}^{\min}> \mu>0$ and $M=\max\left\{\| T_0\|_{\mathcal{C}^0\left(\overline{\Omega}\right)},\;\dfrac{\rho\;\| \Phi_0\|_{\mathcal{C}^0\left(\overline{\Omega}\right)}}{\beta_1\;N_{0}^{\min}-\mu}\right\}>0$ for all $t\geq0$. Consequently, $\overline{T}$ is a super solution of $\left(\ref{AcotacionT}\right)$ and we have
	
	\begin{equation}\label{T_acotada}
	T\left(t,x\right)\leq S\left(t,x\right)\leq \overline{T}\left(t\right)= M\;e^{-\mu\;t}.
	\end{equation}
	
    In particular, $T(t,x)\to 0$ as $t\rightarrow+\infty$ uniformly for $x\in\overline{\Omega}$.
	\\
	
	Then, we can obtain a uniform upper bound in time and space for $N\left(t,x\right)$ since,
	
	\begin{equation}\label{eqN_t}
	\dfrac{\partial\;N}{\partial\; t}=a\left(t,x\right)\;N+b\left(t,x\right)
	\end{equation}
	where
	
	$$a\left(t,x\right)=\beta_2\;\Phi\left(t,x\right)+\beta_1\;T\left(t,x\right) $$
	and
	$$b\left(t,x\right)=\alpha\;B\left(\Phi\left(t,x\right),T\left(t,x\right)\right)+\delta\;\Phi\left(t,x\right)\;T\left(t,x\right).$$ 
	
	Hence, one has the variation of constants formula
	
		\begin{equation}\label{eqN}
	\displaystyle N\left(t,x\right)=\left(N_0\left(x\right)+\int_{0}^{t}b\left(s,x\right)\;e^{-\;A\left(s,x\right)}\;ds\right){e}^{A\left(t,x\right)}\end{equation}
	with $\displaystyle A\left(t,x\right)= \int_{0}^{t}a\left(s,x\right)\;ds $.
	\\
	
	Using now the exponential upper bounds of $\Phi\left(t,x\right)$ and $T\left(t,x\right)$ given in $\left(\ref{Fi_acotada}\right)$ and $\left(\ref{T_acotada}\right)$ respectively, 
	$$a\left(t,x\right)\leq \widehat{a}\left(t\right)=\beta_2\;\| \Phi_0\|_{\mathcal{C}^0\left(\overline{\Omega}\right)}e^{-\beta_2\;N_{0}^{\min}\;t}+\beta_1\;M\;e^{-\mu\;t},$$
	$$b\left(t,x\right)\leq\widehat{b}\left(t\right)=\alpha\;M\;e^{-\mu\;t}+\delta\| \Phi_0\|_{\mathcal{C}^0\left(\overline{\Omega}\right)}\;e^{-\beta_2\;N_{0}^{\min}\;t}\;M\;e^{-\mu\;t},$$
	and then,
	$$A\left(t,x\right)=\int_{0}^{t}a\left(s,x\right)\;ds\leq\int_{0}^{t}\widehat{a}\left(s\right)\;ds\le\mathcal{C}_1\;\;\text{and}\;\;\int_0^t b(s,x) e^{-A(s,x)}\;ds\le\mathcal{C}_2.$$

	Hence, we conclude that there exists a constant $N_{\max}>0$ such that $N\left(t,x\right)\leq N_{\max}$ $\forall\left(t,x\right)\in\left[0,+\infty\right)\times\overline{\Omega}$. Since $N\left(t,x\right)$ is increasing, it holds that there exists $N_*\left(x\right)\leq N_{\max}$ such that $N\left(t,x\right)\rightarrow N_*\left(x\right)\leq N_{\max}$ pointwise in space when $t\rightarrow+\infty$.
	
\end{proof}
	
Our third result shows that when $\beta_1$ is large with respect to $\rho$ (that is, destruction of tumor by necrosis dominates to tumor growth), then the tumor tends to the extinction. For that, we need to introduce some notation. Given $b\in L^\infty(\Omega)$ we denote by $\lambda_1(-\Delta+b)$ the first eigenvalue of the problem
$$
\left\{\begin{array}{ll}
-\Delta u+b(x)u=\lambda u & \mbox{in $\Omega$,}\\
\\
\dfrac{\partial u}{\partial n}=0 & \mbox{on $\partial\Omega$.} 
\end{array}
\right.
$$
\begin{lem}\label{lema_autovalor}
	Given a classical solution $\left(T,N,\Phi\right)$ of $\left(\ref{probOriginal}\right)$-$\left(\ref{condinicio}\right)$ such that $N_0\left(x\right)>0$ for all $x\in\overline{\Omega}$ and assume that
	\begin{equation}\label{condi}
	\rho< \lambda_1(-\Delta +\beta_1 N_0(x)). 
	\end{equation}
	
	Then, for all $t\ge0$:

	$$\| T\left(t,\cdot\right)\|_{\mathcal{C}^0\left(\overline{\Omega}\right)}\leq \| T_0\|_{\mathcal{C}^0\left(\overline{\Omega}\right)}\;e^{-\left(\lambda_1\left(-\Delta+\beta_1\;N_0\left(x\right)\right)-\rho\right)\;t}\quad\forall t>0$$
	and there exists $0<\mu_*<\beta_2\;N_0^{\min}$ and $t_*>0$ large enough, such that 
	$$\|\Phi\left(t,\cdot\right)\|_{\mathcal{C}^0\left(\overline{\Omega}\right)}\leq \|\Phi\left(t_*,\cdot\right)\|_{\mathcal{C}^0\left(\overline{\Omega}\right)}\;e^{-\mu_*\left(t-t_*\right)}\quad\forall t>t_*.$$ 
	Moreover, there exists $N_{\max}>0$ such that
	
	$$N\left(t,x\right)\leq N_{\max}\;\;\forall \left(t,x\right)\in\left[0,+\infty\right)\times\overline{\Omega}.$$
	
%
\end{lem}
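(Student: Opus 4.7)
The plan is to cascade three estimates: first derive the exponential decay of $T$ from the spectral condition, then use this decay to shut down $\Phi$ via the $-\beta_2 N\Phi$ term, and finally control $N$ by the variation-of-constants formula already used in Lemma \ref{lemaestabilidad1}.

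For the bound on $T$, I would exploit $P(\Phi,T)\in[0,1]$, $\sqrt{1-P^2}\geq 0$ and $N(t,x)\geq N_0(x)$ (since $t\mapsto N(t,x)$ is non-decreasing) to dominate
\begin{equation*}
f_1(T,N,\Phi) \leq \rho\, P(\Phi,T)\, T - \beta_1 N_0(x)\, T \leq \rho T - \beta_1 N_0(x) T.
\end{equation*}
Thus $T_t -\Delta T + \beta_1 N_0(x) T \leq \rho T$ with homogeneous Neumann condition. Let $\varphi_1>0$ denote the first eigenfunction of $-\Delta+\beta_1 N_0(x)$ with Neumann BC associated to $\lambda_1:=\lambda_1(-\Delta+\beta_1 N_0(x))$. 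A direct computation shows that $\overline{T}(t,x):=C\,\varphi_1(x)\,e^{-(\lambda_1-\rho)t}$ satisfies $\overline{T}_t-\Delta\overline{T}+\beta_1 N_0(x)\overline{T}=\rho\,\overline{T}$ and $\partial_n\overline{T}=0$, so it is a super-solution. Choosing $C$ so that $C\varphi_1(x)\geq T_0(x)$ in $\overline\Omega$ and applying the parabolic comparison principle yields the announced exponential rate $\lambda_1-\rho>0$ in $\mathcal{C}^0(\overline\Omega)$ (the constant depending on the eigenfunction can be absorbed into $\|T_0\|_{\mathcal{C}^0(\overline\Omega)}$ up to the ratio $\max\varphi_1/\min\varphi_1$).

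For $\Phi$, since the logistic factor $(1-(T+N+\Phi)/K)$ multiplies a non-negative term, whichever its sign I can drop it together with $-\delta T\Phi$ to obtain
\begin{equation*}
\Phi_t \leq \frac{\gamma}{K}\, T\,\Phi - \beta_2 N\,\Phi \leq \left(\frac{\gamma}{K}\|T(t,\cdot)\|_{\mathcal{C}^0(\overline\Omega)} - \beta_2 N_0^{\min}\right)\Phi.
\end{equation*}
Given any $\mu_*\in(0,\beta_2 N_0^{\min})$, the previous step provides $t_*>0$ such that $\frac{\gamma}{K}\|T(t,\cdot)\|_{\mathcal{C}^0(\overline\Omega)}\leq \beta_2 N_0^{\min}-\mu_*$ for $t\geq t_*$. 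Integrating the resulting differential inequality from $t_*$ yields $\|\Phi(t,\cdot)\|_{\mathcal{C}^0(\overline\Omega)}\leq \|\Phi(t_*,\cdot)\|_{\mathcal{C}^0(\overline\Omega)}\,e^{-\mu_*(t-t_*)}$.

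Finally, for the bound on $N$ I reproduce the argument at the end of the proof of Lemma \ref{lemaestabilidad1}: write $N_t=a(t,x)N+b(t,x)$ with $a=\beta_2\Phi+\beta_1 T$, $b=\alpha\,B(\Phi,T)+\delta\,T\Phi$, and use the variation-of-constants formula. Thanks to the exponential decay of both $T$ and $\Phi$ (uniform for $t\geq t_*$, while $T$ and $\Phi$ are bounded on $[0,t_*]$ by $K$), the functions $a$ and $b$ are majorised by integrable-in-time exponentials, so $A(t,x)=\int_0^t a(s,x)\,ds\leq C_1$ and $\int_0^t b(s,x)e^{-A(s,x)}\,ds\leq C_2$ uniformly in $(t,x)$; hence $N(t,x)\leq N_{\max}$.

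The main obstacle is Step 1: a spatially constant super-solution does not work because the potential $\beta_1 N_0(x)$ is not constant and $\rho$ may exceed $\beta_1 N_0^{\min}$, so one really must resort to the principal Neumann eigenfunction of $-\Delta+\beta_1 N_0(x)$ to construct a separated-variables super-solution with the sharp exponential rate $\lambda_1-\rho$; once this step is settled, Steps 2 and 3 are essentially a repetition of the scheme of Lemma \ref{lemaestabilidad1}.
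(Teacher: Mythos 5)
Your proposal is correct and follows essentially the same route as the paper: dominate $f_1$ using $N\geq N_0$ to reduce to a comparison with the principal Neumann eigenpair of $-\Delta+\beta_1 N_0(x)$, then use the uniform decay of $T$ to make $f_3\leq-\mu_*\Phi$ for $t\geq t_*$, and finally bound $N$ by the variation-of-constants argument of Lemma \ref{lemaestabilidad1}. The only (harmless) difference is that you build the separated-variables super-solution $C\varphi_1(x)e^{-(\lambda_1-\rho)t}$ directly for the linearized inequality, whereas the paper keeps the logistic term and invokes the super-solution $\|T_0\|_{\mathcal{C}^0(\overline{\Omega})}e^{-(\lambda_1-\rho)t}\varphi(x)$ from \cite{Wiley_2003}; you are in fact more careful than the paper about the normalization of $\varphi_1$, since guaranteeing $C\varphi_1\geq T_0$ at $t=0$ genuinely introduces the factor $\max\varphi_1/\min\varphi_1$ that the paper's stated constant silently absorbs.
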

\begin{proof}
	Since $N(t,x)\geq N_0(x)$ for all $x\in\overline{\Omega}$ and $t\geq 0$, and using the positivity of $\Phi$ and that $0\leq P(\Phi,T)\leq 1$, we get
	$$
	T_t-\Delta T\leq  \rho T\left(1-\frac{T}{K}\right)-\beta_1 \;N_0(x)\;T.
	$$
	
	Hence, 
	\begin{equation}
	\label{desi}
	T(x,t)\leq S(x,t)\quad \forall t\geq 0, \forall x\in \Omega,
	\end{equation}
	where $S$ is the unique positive solution of the classical logistic equation
	\begin{equation}
	\label{log}
	\left\{\begin{array}{ll}
	S_t-\Delta S+ \beta_1\; N_0(x)\;S=\rho \;S\left(1-\frac{S}{K}\right) & \mbox{$t>0$,\; $x\in \Omega$,}\\
	\\
	S(x,0)= T_0(x) & x\in \Omega,\\
	\\
	\dfrac{\partial S}{\partial n}=0  & \mbox{$t>0$,\; $x\in \partial\Omega$.}
	\end{array}\right.
	\end{equation}
	
	Now, it is known (see for instance \cite{Wiley_2003}) that if $\rho$ satisfies (\ref{condi}) then the problem $\left(\ref{log}\right)$ has a super solution with the form
	
	$$
	\overline{S}(t,x)= \| T_0\|_{\mathcal{C}^0\left(\overline{\Omega}\right)}\;e^{-\left(\lambda_1\left(-\Delta+\beta_1\;N_0\left(x\right)\right)-\rho\right)\;t}\varphi\left(x\right)
	$$
	where $\varphi\left(x\right)$ is the positive eigenfunction associated with $\lambda_1\left(-\Delta+\beta_1\;N_0\left(x\right)\right)$ with $\|\varphi\|_\infty=1$. Consequently, we have that
	
	\begin{equation}\label{T_acotada_lambda}
	T\left(t,x\right)\leq S\left(t,x\right)\leq \overline{S}\left(t,x\right)\leq\| T_0\|_{\mathcal{C}^0\left(\overline{\Omega}\right)}\;e^{-\left(\lambda_1\left(-\Delta+\beta_1\;N_0\left(x\right)\right)-\rho\right)\;t}\rightarrow 0
	\end{equation}
	as $t\rightarrow+\infty$ uniformly for $x\in\overline{\Omega}$.
	\\
	
	Now, since $T,N,\Phi\ge0$, we can bound $f_3\left(T,N,\Phi\right)$ as follows:
	
	$$f_3\left(T,N,\Phi\right)\leq \dfrac{\gamma}{K}\;T\;\Phi\left(1-\dfrac{T+N+\Phi}{K}\right)-\beta_2\;N\;\Phi\le\Phi\left(\dfrac{\gamma}{K}\;T-\beta_2\;N\right)\le \Phi\left(\dfrac{\gamma}{K}\;T-\beta_2\;N_0\left(x\right)\right).$$
	
	Since $N_0\left(x\right)>0$ for all $x\in\overline{\Omega}$ and using $\left(\ref{T_acotada_lambda}\right)$, there exist $t_*>0$ large enough such that for all $t\ge t_*$, $f_3\left(T,N,\Phi\right)\le -\mu_0\;\Phi$ with $0<\mu_*<\beta_2\;N_0^{\min}$. Hence, $\Phi$ satisfies the differential inequality problem
	
		\begin{equation}\label{AcotacionF_lambda}
	\left\{\begin{array}{l}
	\dfrac{\partial\Phi}{\partial t}\leq -\mu_*\;\Phi\;\;\text{   in   }\;\;\left[t_*,+\infty\right)\times\overline{\Omega,}\\
	\\
	\Phi\left(t_*,x\right)=\|\Phi\left(t_*,x\right)\|_{\mathcal{C}^0\left(\overline{\Omega}\right)}\;\;\text{  in  }\;\;\overline{\Omega},
	\end{array}\right.
	\end{equation}
	
	Solving $\left(\ref{AcotacionF_lambda}\right)$, we conclude that 
	
	\begin{equation}\label{F_acotada_lambda}
	\Phi\left(t,x\right)\le \|\Phi\left(t_*,\cdot\right)\|_{\mathcal{C}^0\left(\overline{\Omega}\right)}\;e^{-\mu_0\left(t-t_*\right)}.
	\end{equation}
	
	In particular, $\Phi\left(t,x\right)\to0$ as $t\rightarrow+\infty$ uniformly in $x\in\overline{\Omega}$.
	\\
	
	Using now the exponential upper bounds of $\Phi\left(t,x\right)$ and $T\left(t,x\right)$ given in $\left(\ref{F_acotada_lambda}\right)$ and $\left(\ref{T_acotada_lambda}\right)$ respectively, we can obtain an uniform upper bound in time and space for $N\left(t,x\right)$ using the same argument that in $\left(\ref{eqN_t}\right)$ with the following estimates
	$$a\left(t,x\right)\leq \widehat{a}\left(t\right)=\beta_2\;\|\Phi\left(t_*,\cdot\right)\|_{\mathcal{C}^0\left(\overline{\Omega}\right)}\;e^{-\mu_*\left(t-t_*\right)}+\beta_1\;\| T_0\|_{\mathcal{C}^0\left(\overline{\Omega}\right)}e^{-\left(\lambda_1\left(-\Delta+\beta_1\;N_0\left(x\right)\right)-\rho\right)\;t},$$
	$$\begin{array}{ll}
	b\left(t,x\right)\leq\widehat{b}\left(t\right)=&\alpha\;\| T_0\|_{\mathcal{C}^0\left(\overline{\Omega}\right)}\;e^{-\left(\lambda_1\left(-\Delta+\beta_1\;N_0\left(x\right)\right)-\rho\right)\;t}+\\
	\\
	&+\delta\|\Phi\left(t_*,\cdot\right)\|_{\mathcal{C}^0\left(\overline{\Omega}\right)}\;e^{-\mu_*\left(t-t_*\right)}\;\| T_0\|_{\mathcal{C}^0\left(\overline{\Omega}\right)}\;e^{-\left(\lambda_1\left(-\Delta+\beta_1\;N_0\left(x\right)\right)-\rho\right)\;t}
	\end{array}.$$
	and then,
	$$A\left(t,x\right)=\int_{t_*}^{t}a\left(s,x\right)\;ds\leq\int_{t_*}^{t}\widehat{a}\left(s\right)\;ds\le\mathcal{C}_1\;\;\text{and}\;\;\int_{t_*}^t b(s,x) e^{-A(s,x)}\le\mathcal{C}_2.$$

	With a similar reasoning to the used in Lemma \ref{lemaestabilidad1} we conclude the existence of $N_*\in L^\infty(\Omega)$ and $N_{\max}>0$ such that $N(t,x)\to N_*(x)\leq N_{\max}$ pointwise in space when $t\rightarrow+\infty$.
\end{proof}

\begin{obs}
	It is well-known that the map $\beta_1\mapsto \lambda_1(-\Delta+\beta_1 \;N_0(x))$ is continuous and increasing. Moreover, if $N_0(x)>0$ for $x\in \overline\Omega$ we have that $ \lambda_1(-\Delta+\beta_1 N_0(x))\to \infty$ as $\beta_1\to \infty$. Hence,  given $\rho>0$ there exists $\beta_0(\rho)>0$ large enough such that for $\beta_1\geq \beta_0(\rho)$, condition (\ref{condi}) holds, and then the tumor tends to zero.
\end{obs}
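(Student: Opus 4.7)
The plan is to derive the three announced properties of the map $\beta_1\mapsto \Lambda(\beta_1):=\lambda_1(-\Delta+\beta_1 N_0(x))$ from the variational (Rayleigh) characterization of the first Neumann eigenvalue, and then combine them to produce the threshold $\beta_0(\rho)$. The starting point is
\begin{equation*}
\Lambda(\beta_1)=\inf_{\substack{u\in H^1(\Omega)\\ \|u\|_{L^2}=1}} \mathcal{E}(u,\beta_1),\qquad \mathcal{E}(u,\beta_1):=\int_\Omega |\nabla u|^2\,dx+\beta_1\int_\Omega N_0(x)\,u^2\,dx,
\end{equation*}
where the infimum is attained at a (positive, unique up to sign) eigenfunction $\varphi_{\beta_1}>0$ normalized in $L^2(\Omega)$.

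First I would establish monotonicity. For $0\leq \beta_1<\beta_1'$ and any admissible $u$, since $N_0\geq 0$ it is immediate that $\mathcal{E}(u,\beta_1)\leq \mathcal{E}(u,\beta_1')$; taking the infimum yields $\Lambda(\beta_1)\leq \Lambda(\beta_1')$. Strict monotonicity follows by testing with $u=\varphi_{\beta_1}$: since $N_0\not\equiv 0$ and $\varphi_{\beta_1}>0$ in $\overline\Omega$, one gets $\int_\Omega N_0\,\varphi_{\beta_1}^2\,dx>0$, hence $\Lambda(\beta_1')\leq \mathcal{E}(\varphi_{\beta_1},\beta_1')<\mathcal{E}(\varphi_{\beta_1},\beta_1)=\Lambda(\beta_1)$ is strict. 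Continuity comes for free from the same characterization: $\Lambda(\beta_1)$ is the pointwise infimum of the family $\{\beta_1\mapsto \mathcal{E}(u,\beta_1)\}_u$ of affine functions of $\beta_1$, so it is concave on $[0,\infty)$ and therefore continuous on $(0,\infty)$; continuity at $\beta_1=0$ is obtained by sandwiching $\Lambda(0)=0\leq \Lambda(\beta_1)\leq \beta_1\|N_0\|_{L^\infty}$ using the constant test function $u\equiv |\Omega|^{-1/2}$.

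Next I would prove divergence. Since $N_0\in \mathcal{C}^0(\overline\Omega)$ and $N_0(x)>0$ on the compact set $\overline\Omega$, one has $N_0^{\min}:=\min_{\overline\Omega} N_0>0$. Then for every $u$ with $\|u\|_{L^2}=1$,
\begin{equation*}
\mathcal{E}(u,\beta_1)\geq \beta_1 N_0^{\min}\int_\Omega u^2\,dx=\beta_1 N_0^{\min},
\end{equation*}
so $\Lambda(\beta_1)\geq \beta_1 N_0^{\min}\to+\infty$ as $\beta_1\to+\infty$.

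Finally, the existence of $\beta_0(\rho)$ is just the intermediate value theorem applied to the continuous, increasing, divergent map $\Lambda$: given $\rho>0$ choose $\beta_0(\rho)$ to be the (unique) value with $\Lambda(\beta_0(\rho))=\rho$, or any $\beta_0$ with $\beta_0 N_0^{\min}>\rho$ as a concrete sufficient choice. Then for every $\beta_1\geq \beta_0(\rho)$, monotonicity gives $\rho<\Lambda(\beta_1)=\lambda_1(-\Delta+\beta_1 N_0(x))$, which is precisely condition~(\ref{condi}); Lemma~\ref{lema_autovalor} then delivers the exponential decay of the tumor. The main (mild) technical point is ensuring strict positivity of the principal eigenfunction under Neumann boundary conditions, which follows from the standard Krein--Rutman/strong maximum principle argument for $-\Delta+\beta_1 N_0$ in $\Omega\in\mathcal{C}^2$; every other ingredient reduces to elementary manipulation of the Rayleigh quotient.
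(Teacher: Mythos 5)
The paper offers no proof of this remark at all---it is asserted as ``well-known''---so there is nothing to compare against; your Rayleigh-quotient argument is the standard way to substantiate it, and in outline it is correct: weak monotonicity from $N_0\geq 0$, concavity (hence continuity) as an infimum of affine functions of $\beta_1$, the lower bound $\lambda_1(-\Delta+\beta_1 N_0)\geq \beta_1 N_0^{\min}$ for the divergence, and then the threshold $\beta_0(\rho)$.

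There is, however, one step that is wrong as written. In your strict-monotonicity argument you test with the eigenfunction $\varphi_{\beta_1}$ of the \emph{smaller} parameter and claim $\mathcal{E}(\varphi_{\beta_1},\beta_1')<\mathcal{E}(\varphi_{\beta_1},\beta_1)$ for $\beta_1<\beta_1'$. Since $\mathcal{E}(u,\beta_1')-\mathcal{E}(u,\beta_1)=(\beta_1'-\beta_1)\int_\Omega N_0 u^2\,dx>0$, the inequality actually goes the other way, and the chain you wrote would conclude $\Lambda(\beta_1')<\Lambda(\beta_1)$, i.e.\ that $\Lambda$ is strictly \emph{decreasing}, contradicting your own weak monotonicity. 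The repair is to test the variational problem at $\beta_1$ with the eigenfunction of the \emph{larger} parameter:
\begin{equation*}
\Lambda(\beta_1)\leq \mathcal{E}(\varphi_{\beta_1'},\beta_1)=\Lambda(\beta_1')-(\beta_1'-\beta_1)\int_\Omega N_0\,\varphi_{\beta_1'}^2\,dx<\Lambda(\beta_1'),
\end{equation*}
where the last integral is positive because $N_0>0$ on $\overline\Omega$ and $\varphi_{\beta_1'}\not\equiv 0$ (so the Krein--Rutman positivity you invoke is not even needed here). Note also that strict monotonicity is dispensable for the conclusion of the remark: your alternative choice of any $\beta_0$ with $\beta_0 N_0^{\min}>\rho$, combined with the bound $\Lambda(\beta_1)\geq\beta_1 N_0^{\min}$, already gives $\Lambda(\beta_1)>\rho$ for all $\beta_1\geq\beta_0$, which is condition (\ref{condi}).
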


\begin{col}\label{corollary_estabilidad_1}
	Assume hypotheses of Lemma $\ref{lemaestabilidad1}$ or Lemma $\ref{lema_autovalor}$ and given $N_*\left(x\right)\in\mathcal{C}^0\left(\overline{\Omega}\right)$ such that $N_*\left(x\right)\geq N_{*}^{\min}>0$ $\forall x\in\overline{\Omega}$, then the semi-trivial steady solution $\left(0,N_*,0\right)$ is locally stable in $\mathcal{C}^0\left(\overline{\Omega}\right)$.
\end{col}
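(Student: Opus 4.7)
The plan is to verify Lyapunov stability directly: given $\varepsilon > 0$ I want to produce $r > 0$ such that any initial datum with
$\|T_0\|_{\mathcal{C}^0(\overline{\Omega})} + \|N_0 - N_*\|_{\mathcal{C}^0(\overline{\Omega})} + \|\Phi_0\|_{\mathcal{C}^0(\overline{\Omega})} < r$
produces a classical solution staying within $\mathcal{C}^0$-distance $\varepsilon$ of $(0, N_*, 0)$ for all $t \geq 0$. First I would restrict $r < N_*^{\min}/2$, which forces $N_0(x) \geq N_*^{\min}/2 > 0$ uniformly in $\overline{\Omega}$; under the hypothesis of Lemma \ref{lemaestabilidad1} the condition $\delta \geq \gamma/K$ is intrinsic to the reaction parameters and needs no further care, while under the hypothesis of Lemma \ref{lema_autovalor} I would shrink $r$ further, exploiting the continuity and monotonicity of the map $b \mapsto \lambda_1(-\Delta + b)$, to guarantee that $\rho < \lambda_1(-\Delta + \beta_1 N_0(x))$ persists for every admissible perturbed $N_0$.

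Once the initial data lives in this restricted neighbourhood, Lemma \ref{lemaestabilidad1} (resp.\ Lemma \ref{lema_autovalor}) furnishes exponential decays
$$\|T(t,\cdot)\|_{\mathcal{C}^0(\overline{\Omega})} \leq M\, e^{-\mu t}, \qquad \|\Phi(t,\cdot)\|_{\mathcal{C}^0(\overline{\Omega})} \leq \|\Phi_0\|_{\mathcal{C}^0(\overline{\Omega})}\, e^{-\nu t},$$
with decay rates $\mu, \nu > 0$ bounded away from zero uniformly for all admissible data (because $N_0^{\min}$ is) and with amplitudes of order $r$. Fixing $\mu$ once and for all to be smaller than, say, $\beta_1 N_*^{\min}/2$ keeps the denominator in the expression $M = \max\{\|T_0\|_\infty,\; \rho \|\Phi_0\|_\infty/(\beta_1 N_0^{\min} - \mu)\}$ bounded away from zero for every admissible $N_0$. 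This already controls the $T$ and $\Phi$ coordinates of the distance to $(0, N_*, 0)$ by a quantity of order $r$.

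For the $N$ coordinate I would exploit the fact that $t \mapsto N(t,x)$ is nondecreasing, giving the lower bound $N(t,x) \geq N_0(x) \geq N_*(x) - r$ for free. The upper bound comes from integrating the ODE
$$N(t,x) - N_0(x) = \int_0^t \big[\alpha B(\Phi, T) + \beta_1 N T + \delta T \Phi + \beta_2 N \Phi\big](s, x)\, ds,$$
and combining the uniform bound $N(t,x) \leq N_{\max}$ supplied by the lemmas with the exponential integrability of $T$ and $\Phi$. Each of the four summands then contributes at most a constant times $r$ to the integral, uniformly in $(t,x)$, yielding $|N(t,x) - N_*(x)| \leq C r$. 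Combining the three coordinate estimates and choosing $r < \varepsilon/C$ closes the argument.

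The main obstacle I anticipate is not any single estimate but rather the bookkeeping: one must check that all the implicit constants -- notably $M$, $N_{\max}$, and the decay rates $\mu,\nu$ -- remain \emph{uniformly} controlled as the initial data ranges over the $r$-ball around $(0, N_*, 0)$, instead of degenerating as $r\to 0$ or as $N_0$ approaches the boundary of the admissible set. The uniform lower bound $N_0^{\min} \geq N_*^{\min}/2$, together with the a priori choice of $\mu$ below $\beta_1 N_*^{\min}/2$, is precisely what keeps every ingredient of Lemmas \ref{lemaestabilidad1} and \ref{lema_autovalor} stable in that limit.
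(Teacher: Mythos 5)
Your proposal is correct and follows essentially the same route as the paper: the paper's own proof is a two-line sketch that takes initial data within $\overline{\delta}$ of $(0,N_*,0)$ and re-runs the arguments of Lemmas \ref{lemaestabilidad1} and \ref{lema_autovalor}, explicitly leaving the details to the reader. You have simply supplied those details — the uniform lower bound on $N_0^{\min}$, the persistence of the eigenvalue condition under perturbation, and the $O(r)$ tracking of $M$, $N_{\max}$ and the decay rates — all of which check out.
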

\begin{proof}
	Let $\epsilon>0$ and $\|T_0\|_{\mathcal{C}^0\left(\overline{\Omega}\right)},\;\|\Phi_0\|_{\mathcal{C}^0\left(\overline{\Omega}\right)},\;\|N_0-N_*\|_{\mathcal{C}^0\left(\overline{\Omega}\right)}\leq\overline{\delta}$ for $\overline{\delta}>0$ to choice in function of $\epsilon$. Following the same argument that in Lemma $\ref{lemaestabilidad1}$ or Lemma $\ref{lema_autovalor}$, it is possible to prove (details are left to the reader) that $\|T\left(t,\cdot\right)\|_{\mathcal{C}^0\left(\overline{\Omega}\right)},\;\|\Phi\left(t,\cdot\right)\|_{\mathcal{C}^0\left(\overline{\Omega}\right)},\;\|N\left(t,\cdot\right)-N_*\|_{\mathcal{C}^0\left(\overline{\Omega}\right)}\leq\epsilon$ for all $t\ge 0$.
\end{proof}

	In the following result, we are able to know the long time behaviour of the system $\left(\ref{probOriginal}\right)$-$\left(\ref{condinicio}\right)$ when $N_0\left(x\right)$ is close to the capacity $K$.
	
  \begin{lem}\label{lemaestabilidad2}
  	Let $\epsilon>0$ small enough such that $N_0(x)\ge K-\epsilon$ for all $x\in\overline{\Omega}$. Then, the classical solution $\left(T,N,\Phi\right)$ of $\left(\ref{probOriginal}\right)$-$\left(\ref{condinicio}\right)$ satisfies
  	
  	$$\begin{array}{l}
  	T\left(t,x\right)\leq \| T_0\|_{\mathcal{C}^0\left(\overline{\Omega}\right)}\;e^{-\left(\beta_1\left(K-\epsilon\right)-\rho\dfrac{\epsilon}{K}\right)\;t},\\
  	\\
  	\Phi\left(t,x\right)\leq \| \Phi_0\|_{\mathcal{C}^0\left(\overline{\Omega}\right)}\;e^{-\left(\beta_2\left(K-\epsilon\right)-\gamma\dfrac{\epsilon}{K}\right)\;t},
  	\end{array}$$
  	for all $\left(t,x\right)\in\left[0,+\infty\right)\times\overline{\Omega}$. In particular, if $\rho\dfrac{\epsilon}{K}-\beta_1\left(K-\epsilon\right)<0$ and $\gamma\dfrac{\epsilon}{K}-\beta_2\left(K-\epsilon\right)<0$ we get that $T\left(t,x\right),\Phi\left(t,x\right)\to 0$ uniformly in $x$ as $t\to +\infty$. Finally, there exists $N_{\max}$ such that $N\left(t,x\right)\leq N_{\max}$ for all $\left(t,x\right)\in\left[0,+\infty\right)\times\overline{\Omega}$.
  	
%
   \end{lem}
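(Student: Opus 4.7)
The plan is to combine three ingredients which have already been used for Lemmas \ref{lemaestabilidad1} and \ref{lema_autovalor}: the monotonicity of $t\mapsto N(t,x)$, a parabolic comparison argument for $T$ via a spatially constant supersolution, and an ODE comparison (pointwise in $x$) for $\Phi$. The smallness hypothesis on $\epsilon$ is used only through the pointwise inequality
$$
N(t,x)\ge N_0(x)\ge K-\epsilon\qquad\forall\,(t,x)\in[0,+\infty)\times\overline{\Omega},
$$
which in turn forces $1-(T+N+\Phi)/K\le \epsilon/K$ everywhere, so that both logistic factors in $f_1$ and $f_3$ become small.

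First I would estimate $f_1$. Using $0\le P(\Phi,T)\le 1$, $\sqrt{1-P(\Phi,T)^2}\ge 0$, and the above pointwise inequality,
$$
f_1(T,N,\Phi)\le \rho\,T\,\frac{\epsilon}{K}-\beta_1\,N\,T\le -\left(\beta_1(K-\epsilon)-\rho\frac{\epsilon}{K}\right)T.
$$
Setting $a:=\beta_1(K-\epsilon)-\rho\epsilon/K$, the tumor thus satisfies $T_t-\Delta T+a\,T\le 0$ with $\partial_n T=0$ on $\partial\Omega$. I would then compare $T$ against the spatially constant function $\overline T(t):=\|T_0\|_{\mathcal{C}^0(\overline\Omega)}\,e^{-a t}$, which is an exact solution of the linear equation $\overline T_t-\Delta\overline T+a\,\overline T=0$, satisfies $\partial_n\overline T=0$, and verifies $\overline T(0)\ge T_0(x)$. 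Applying the parabolic maximum principle to $V=T-\overline T$ yields $T(t,x)\le\overline T(t)$, which is the desired bound.

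For $\Phi$, there is no Laplacian, so the argument is pointwise. Using $0\le T\le K$, $\sqrt{1-P^2}\le 1$, and $-\delta T\Phi\le 0$, I bound
$$
f_3(T,N,\Phi)\le \gamma\,\frac{T}{K}\,\Phi\,\frac{\epsilon}{K}-\beta_2\,N\,\Phi\le -\left(\beta_2(K-\epsilon)-\gamma\frac{\epsilon}{K}\right)\Phi.
$$
The ODE $\Phi_t\le -b\,\Phi$ with $b:=\beta_2(K-\epsilon)-\gamma\epsilon/K$ and Gronwall give the claimed bound $\Phi(t,x)\le\|\Phi_0\|_{\mathcal{C}^0(\overline\Omega)}\,e^{-b t}$. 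In particular, when $a,b>0$ we obtain uniform exponential convergence of $T$ and $\Phi$ to zero.

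Finally, for the upper bound on $N$, I would mimic the last step of Lemma \ref{lemaestabilidad1}, rewriting
$$
\frac{\partial N}{\partial t}=a(t,x)\,N+b(t,x),\qquad a=\beta_1 T+\beta_2\Phi,\quad b=\alpha B(\Phi,T)+\delta T\Phi,
$$
and applying the variation of constants formula together with the exponential decay bounds just obtained. Under the positivity assumption on $a$ and $b$ above, both $a(\cdot,x)$ and $b(\cdot,x)$ are dominated by integrable exponentials uniform in $x$, so the integrals $\int_0^\infty a$ and $\int_0^t b\,e^{-A}\,ds$ are bounded by constants $\mathcal{C}_1,\mathcal{C}_2$ independent of $x$, yielding $N(t,x)\le N_{\max}$. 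The only delicate point is cleanly justifying the maximum principle comparison for $T$ with Neumann boundary data (non-flux conditions are preserved by the constant supersolution), but this is standard; everything else reduces to the pointwise bookkeeping above.
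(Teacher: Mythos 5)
Your proposal is correct and follows essentially the same route as the paper: monotonicity of $N$ gives $1-\frac{T+N+\Phi}{K}\le\frac{\epsilon}{K}$, then $f_1\le\bigl(\rho\frac{\epsilon}{K}-\beta_1(K-\epsilon)\bigr)T$ is handled by parabolic comparison with the spatially constant exponential (the paper phrases this as comparison with an auxiliary linear problem whose solution is exactly that function), $f_3\le\bigl(\gamma\frac{\epsilon}{K}-\beta_2(K-\epsilon)\bigr)\Phi$ is handled pointwise by Gronwall, and $N$ is bounded via the same variation-of-constants formula as in Lemma \ref{lemaestabilidad1}. No substantive differences.
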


\begin{proof}
	
  Since $N$ is increasing, we get
  $$N\left(t,x\right)\geq N_0\left(x\right)> K-\epsilon\quad\forall t\geq0,\;\;\forall x\in\overline{\Omega}.$$
  
  Using now that $T,\;\Phi\geq 0$, 
  
  $$1-\dfrac{T+N+\Phi}{K}\leq 1-\dfrac{N}{K}<1-\dfrac{K-\epsilon}{K}=\dfrac{\epsilon}{K}.$$
  
  Therefore, $T$ satisfies
  
  $$\dfrac{\partial T}{\partial t}-\Delta\;T=f_1\left(T,N,\Phi\right)\leq\rho\;T\;\dfrac{\epsilon}{K}-\beta_1\;\left(K-\epsilon\right)\;T=\left(\rho\;\dfrac{\epsilon}{K}-\beta_1\left(K-\epsilon\right)\right)T.$$
  
  In particular, $T\leq S$, where $S$ is the unique solution of the following problem
   
    \begin{equation}\label{acotacionT}
   \left\{\begin{array}{l}
   \dfrac{\partial S}{\partial t}-\Delta\;S= -\left(\beta_1\left(K-\epsilon\right)-\rho\;\dfrac{\epsilon}{K}\right)S\;\;\text{  in  }\left[0,+\infty\right)\times\overline{\Omega},\\
   \\
   S\left(0,x\right)=\| T_0\|_{\mathcal{C}^0\left(\overline{\Omega}\right)}\text{  in  }\Omega,\\
   \\
   \dfrac{\partial S}{\partial n}\Bigg\vert_{\partial\Omega}=0.
   \end{array}
   \right.
   \end{equation} 
   
   Solving $\left(\ref{acotacionT}\right)$ we conclude that
    
   $$T\left(t,x\right)\le S\left(t,x\right)=\| T_0\|_{\mathcal{C}^0\left(\overline{\Omega}\right)}\;e^{-\left(\beta_1\left(K-\epsilon\right)-\rho\dfrac{\epsilon}{K}\right)\;t}.$$ 
   	
   	Therefore, if $\left(\rho\;\dfrac{\epsilon}{K}-\beta_1\left(K-\epsilon\right)\right)<0$, then, $S\left(t,x\right)$ satisfies that
   
   \begin{equation}\label{acotacion_T2}
   T\left(t,x\right)\leq S\left(t,x\right)=\| T_0\|_{\mathcal{C}^0\left(\overline{\Omega}\right)}\;e^{-\left(\beta_1\left(K-\epsilon\right)-\rho\dfrac{\epsilon}{K}\right)\;t}\rightarrow0
     \end{equation}
   uniformly for $x\in\overline{\Omega}$ as $t\rightarrow+\infty$.
   \\
   
   On the other hand, since $T\leq K$ and $N\ge K-\epsilon$, we get

   $$f_3\left(T,N,\Phi\right)\leq\dfrac{\gamma}{K}\;T\;\Phi\;\dfrac{\epsilon}{K}-\beta_2\left(K-\epsilon\right)\;\Phi\leq\left(\gamma\dfrac{\epsilon}{K}-\beta_2\left(K-\epsilon\right)\right)\Phi.$$
   
   Hence, we deduce that $\Phi$ satisfies

   \begin{equation}\label{acotacionF}
   \left\{\begin{array}{l}
   \dfrac{\partial \Phi}{\partial t}\leq-\left(\beta_2\left(K-\epsilon\right)-\gamma\dfrac{\epsilon}{K}\right)\Phi\text{  in  }\left[0,+\infty\right)\times\overline{\Omega},\\
   \\
   \Phi\left(0,x\right)\leq\| \Phi_0\|_{\mathcal{C}^0\left(\overline{\Omega}\right)}\text{  in  }\overline{\Omega}.
   \end{array}
   \right.
   \end{equation} 
   
   As consequence, if $\left(\gamma\dfrac{\epsilon}{K}-\beta_2\left(K-\epsilon\right)\right)<0$,
   
   \begin{equation}\label{acotacionFi2}
   \Phi\left(t,x\right)\leq \| \Phi_0\|_{\mathcal{C}^0\left(\overline{\Omega}\right)}\;e^{-\left(\beta_2\left(K-\epsilon\right)-\gamma\dfrac{\epsilon}{K}\right)t}\rightarrow0
   \end{equation}
    as $t\rightarrow+\infty$ uniformly for $x\in\overline{\Omega}$.
  \\

%
%
   
  Finally, we can obtain an uniform upper bound in time and space for $N\left(t,x\right)$ using the same argument that in $\left(\ref{eqN_t}\right)$. Now, using the upper bound for $T\left(t,x\right)$ and $\Phi\left(t,x\right)$ given in $\left(\ref{acotacion_T2}\right)$ and $\left(\ref{acotacionFi2}\right)$ one has,
   
   $$a\left(t,x\right)\leq \widehat{a}\left(t\right)=\beta_2\;\| \Phi_0\|_{\mathcal{C}^0\left(\overline{\Omega}\right)}e^{-\left(\beta_2\left(K-\epsilon\right)-\gamma\dfrac{\epsilon}{K}\right)t}+\beta_1\;\| T_0\|_{\mathcal{C}^0\left(\overline{\Omega}\right)}e^{-\left(\beta_1\left(K-\epsilon\right)-\rho\dfrac{\epsilon}{K}\right)\;t},$$
   hence,
   $$\int_{0}^{t}a\left(s,x\right)\;ds\leq\widehat{A}\left(t\right)=\int_{0}^{t}\widehat{a}\left(s\right)\;ds\le\mathcal{C}_1,$$
   and
   $$\begin{array}{ll}
   b\left(t,x\right)\leq\widehat{b}\left(t\right)=&\alpha\| T_0\|_{\mathcal{C}^0\left(\overline{\Omega}\right)}e^{-\left(\beta_1\left(K-\epsilon\right)-\rho\dfrac{\epsilon}{K}\right)\;t}+\\
   \\
   &+\delta\| \Phi_0\|_{\mathcal{C}^0\left(\overline{\Omega}\right)}\;e^{-\left(\beta_2\left(K-\epsilon\right)-\gamma\dfrac{\epsilon}{K}\right)\;t}\| T_0\|_{\mathcal{C}^0\left(\overline{\Omega}\right)}\;e^{-\left(\beta_1\left(K-\epsilon\right)-\rho\dfrac{\epsilon}{K}\right)t},
   \end{array}$$
   hence,
   $$\int_0^t b(s,x) e^{-A(s,x)}\le\mathcal{C}_2.$$
   
   With a similar reasoning to the used in Lemmas \ref{lemaestabilidad1} and \ref{lema_autovalor} we conclude the existence of $N_*\in L^\infty(\Omega)$ and $N_{\max}>0$ such that $N(t,x)\to N_*(x)\leq N_{\max}$ pointwisely in space when $t\rightarrow+\infty$.
\end{proof}

\begin{col}
Assume hypotheses of Lemma $\ref{lemaestabilidad2}$ and given $N_*\in \mathcal{C}^0\left(\overline{\Omega}\right)$ such that $N_*(x)\ge N_*^{\min}\ge K-\epsilon$ with $\epsilon>0$ small enough, then the semi-trivial steady solution $(0,N_*(x),0)$ is locally stable in $C^0(\overline\Omega)$
\end{col}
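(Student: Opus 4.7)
The plan is to mimic the structure of Corollary \ref{corollary_estabilidad_1}, replacing the exponential-decay estimates of Lemmas \ref{lemaestabilidad1} and \ref{lema_autovalor} with those provided by Lemma \ref{lemaestabilidad2}. Fix $\eta>0$; I look for $\overline{\delta}>0$ (depending only on $\eta$ and the parameters) such that whenever
$$
\|T_0\|_{\mathcal{C}^0(\overline{\Omega})}+\|\Phi_0\|_{\mathcal{C}^0(\overline{\Omega})}+\|N_0-N_*\|_{\mathcal{C}^0(\overline{\Omega})}\le \overline{\delta},
$$
the classical solution satisfies $\|T(t,\cdot)\|_{\mathcal{C}^0(\overline{\Omega})}+\|\Phi(t,\cdot)\|_{\mathcal{C}^0(\overline{\Omega})}+\|N(t,\cdot)-N_*\|_{\mathcal{C}^0(\overline{\Omega})}\le \eta$ for all $t\ge 0$.

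First, because $N_*(x)\ge K-\epsilon$ and $\|N_0-N_*\|_{\mathcal{C}^0(\overline{\Omega})}\le\overline{\delta}$, I have $N_0(x)\ge K-(\epsilon+\overline{\delta})$ for every $x\in\overline{\Omega}$. Choosing $\overline{\delta}$ small enough that $\epsilon+\overline{\delta}$ still satisfies the smallness conditions of Lemma \ref{lemaestabilidad2} (that is, $\rho(\epsilon+\overline{\delta})/K<\beta_1(K-\epsilon-\overline{\delta})$ and $\gamma(\epsilon+\overline{\delta})/K<\beta_2(K-\epsilon-\overline{\delta})$), Lemma \ref{lemaestabilidad2} then yields positive rates $\mu_T,\mu_\Phi>0$, uniform in $\overline{\delta}$ for $\overline{\delta}$ small, such that
$$
T(t,x)\le \|T_0\|_{\mathcal{C}^0(\overline{\Omega})}\,e^{-\mu_T t},\qquad \Phi(t,x)\le \|\Phi_0\|_{\mathcal{C}^0(\overline{\Omega})}\,e^{-\mu_\Phi t}.
$$
In particular $\|T(t,\cdot)\|_{\mathcal{C}^0(\overline{\Omega})}\le \overline{\delta}$ and $\|\Phi(t,\cdot)\|_{\mathcal{C}^0(\overline{\Omega})}\le \overline{\delta}$ for all $t\ge 0$, which handles the $T$- and $\Phi$-components once I take $\overline{\delta}\le \eta/3$.

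The main point, as in the preceding corollary, is to control $\|N(t,\cdot)-N_*\|_{\mathcal{C}^0(\overline{\Omega})}$. Since $t\mapsto N(t,x)$ is nondecreasing, $N(t,x)\ge N_0(x)\ge N_*(x)-\overline{\delta}$, which gives the lower bound. For the upper bound I use the variation-of-constants representation already exploited in the proofs of Lemmas \ref{lemaestabilidad1} and \ref{lemaestabilidad2}:
$$
N(t,x)=\Bigl(N_0(x)+\int_0^t b(s,x)\,e^{-A(s,x)}\,ds\Bigr)e^{A(t,x)},
$$
with $a(t,x)=\beta_2\Phi+\beta_1 T$, $b(t,x)=\alpha B(\Phi,T)+\delta T\Phi$, and $A(t,x)=\int_0^t a(s,x)\,ds$. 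The exponential bounds on $T$ and $\Phi$, together with $B(\Phi,T)\le T$, produce
$$
A(t,x)\le C_a\,\overline{\delta},\qquad \int_0^t b(s,x)\,e^{-A(s,x)}\,ds\le C_b(\overline{\delta}),
$$
where $C_a$ depends only on the parameters and $C_b(\overline{\delta})\to 0$ as $\overline{\delta}\to 0$. Writing $N(t,x)-N_*(x)=(N_0-N_*)e^{A}+N_*(e^{A}-1)+e^{A}\int_0^t b\,e^{-A}\,ds$ and taking supremum in $x$ leads to
$$
\|N(t,\cdot)-N_*\|_{\mathcal{C}^0(\overline{\Omega})}\le \overline{\delta}\,e^{C_a\overline{\delta}}+K\bigl(e^{C_a\overline{\delta}}-1\bigr)+e^{C_a\overline{\delta}}\,C_b(\overline{\delta}),
$$
which tends to $0$ as $\overline{\delta}\to 0$ and can therefore be made $\le \eta/3$ by choosing $\overline{\delta}$ sufficiently small.

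The only delicate step is the bookkeeping in this last estimate — ensuring that every constant arising from integrating the exponentials actually scales with $\overline{\delta}$ (and not merely with $\|T_0\|+\|\Phi_0\|$ through some hidden dependence on the decay rates), which is why I insist on the rates $\mu_T,\mu_\Phi$ being uniform in $\overline{\delta}$ for $\overline{\delta}$ small. With that taken care of, combining the three bounds yields local stability in $\mathcal{C}^0(\overline{\Omega})$.
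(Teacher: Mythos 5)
Your proposal is correct and follows essentially the same route the paper intends: the paper's proof is a one-line reference to the argument of Lemma \ref{lemaestabilidad2} combined with the (itself largely omitted) proof of Corollary \ref{corollary_estabilidad_1}, i.e.\ the exponential bounds on $T$ and $\Phi$ plus the variation-of-constants formula $\left(\ref{eqN}\right)$ for $N$, which is exactly what you carry out. Your write-up actually supplies the details the paper leaves to the reader, including the correct observation that the perturbed smallness parameter $\epsilon+\overline{\delta}$ must still satisfy the sign conditions of Lemma \ref{lemaestabilidad2} and that the decay rates stay uniformly positive as $\overline{\delta}\to 0$.
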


\begin{proof}
	Using the argument of Lemma $\ref{lemaestabilidad2}$, it is similar to the proof of Corollary $\ref{corollary_estabilidad_1}$.
\end{proof}

\subsection{Numerical Simulations}
In order to see the asymptotic behaviour of problem $\left(\ref{probOriginal}\right)$ 
with the boundary condition $\left(\ref{condifronte}\right)$
graphically, we will show $3$ numerical simulations for different initial conditions in the domain $\Omega=\left(-2,2\right)^2$. In all of them, we have considered the constant initial vasculature $\Phi_0\left(x\right)=0.5$ and initial necrosis zero, $N_0\left(x\right)=0$ for all $x\in\overline{\Omega}$. The parameters are taken as:

%
%

\begin{table}[H]
	\centering
	\begin{tabular}{c|c|c|c|c|c|c|c}
		
		Parameter&	$\rho$	& $\alpha$  &  $\beta_1$ & $\beta_2$ & $\gamma$ &$\delta$ & $K$   \\
		\hline
		Value&	$1$ & $0.03$ & $0.03$  & $0.03$ & $0.003$ & $0.3$&$1$  \\
		
	\end{tabular}
	\caption{\label{Table2} Parameters value.}
\end{table}

Note that hypothesis $\left(\ref{CondiNlejosKdelta}\right)$ is satisfied, hence tumor and vasculature will vanish at infinity time. Indeed, starting with the different initial conditions for the tumor given in Figure $\ref{tumor_inicial}$:

\begin{figure}[H]
	\centering
	\begin{subfigure}[b]{0.25\linewidth}
		\includegraphics[width=1.2\linewidth]{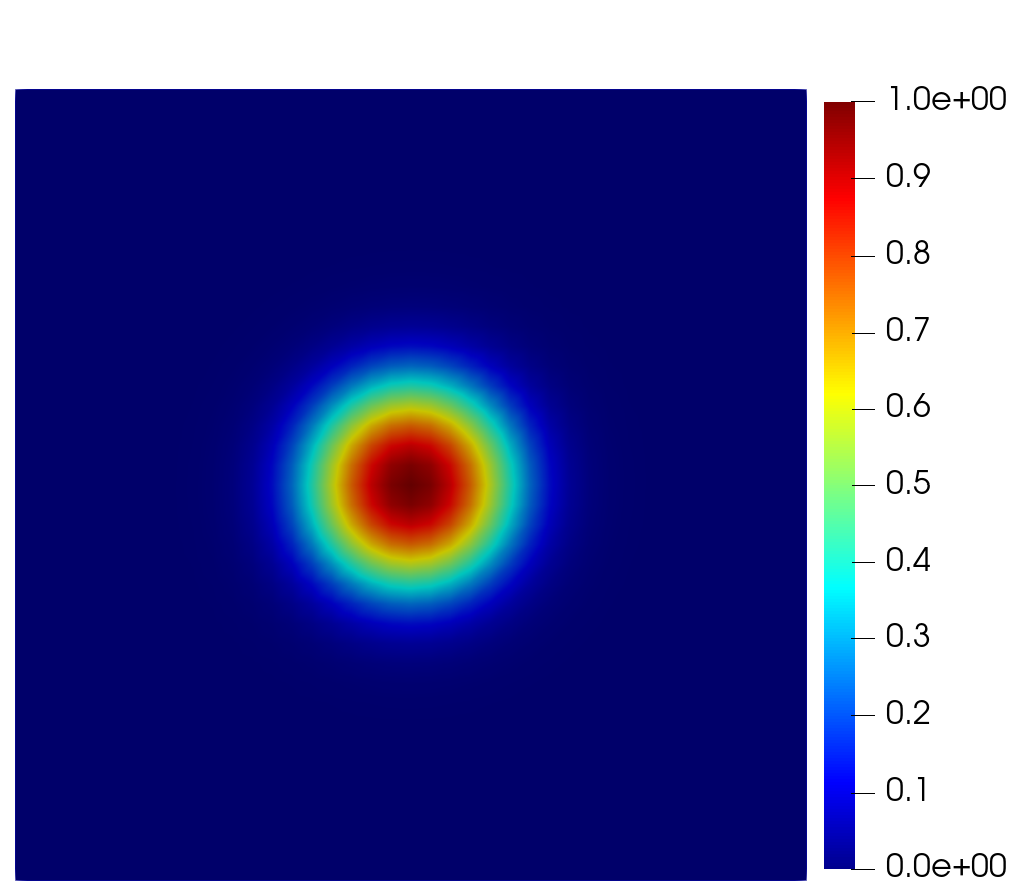}
		\centering
			\caption{ One tumor.}
	\end{subfigure}
	\hspace{1cm}
	\begin{subfigure}[b]{0.25\linewidth}
		\includegraphics[width=1.2\linewidth]{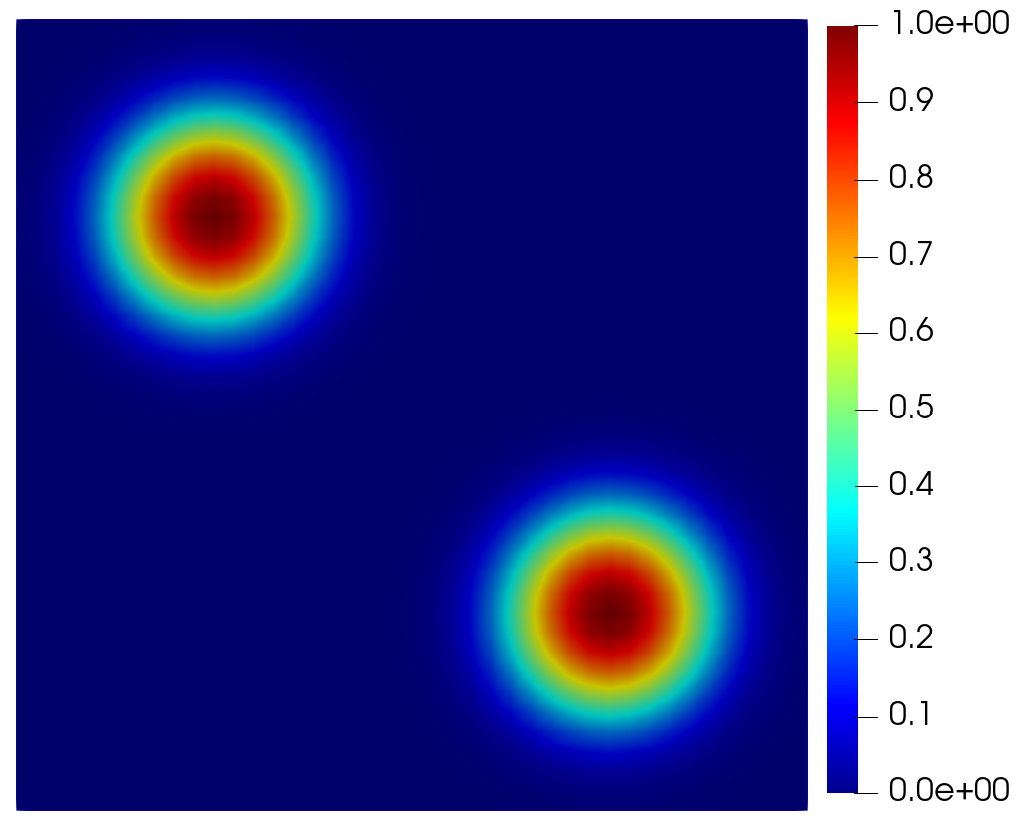}
		\centering
			\caption{ Two tumors.}
	\end{subfigure}
	\hspace{1cm}
	\begin{subfigure}[b]{0.25\linewidth}
		\includegraphics[width=1.2\linewidth]{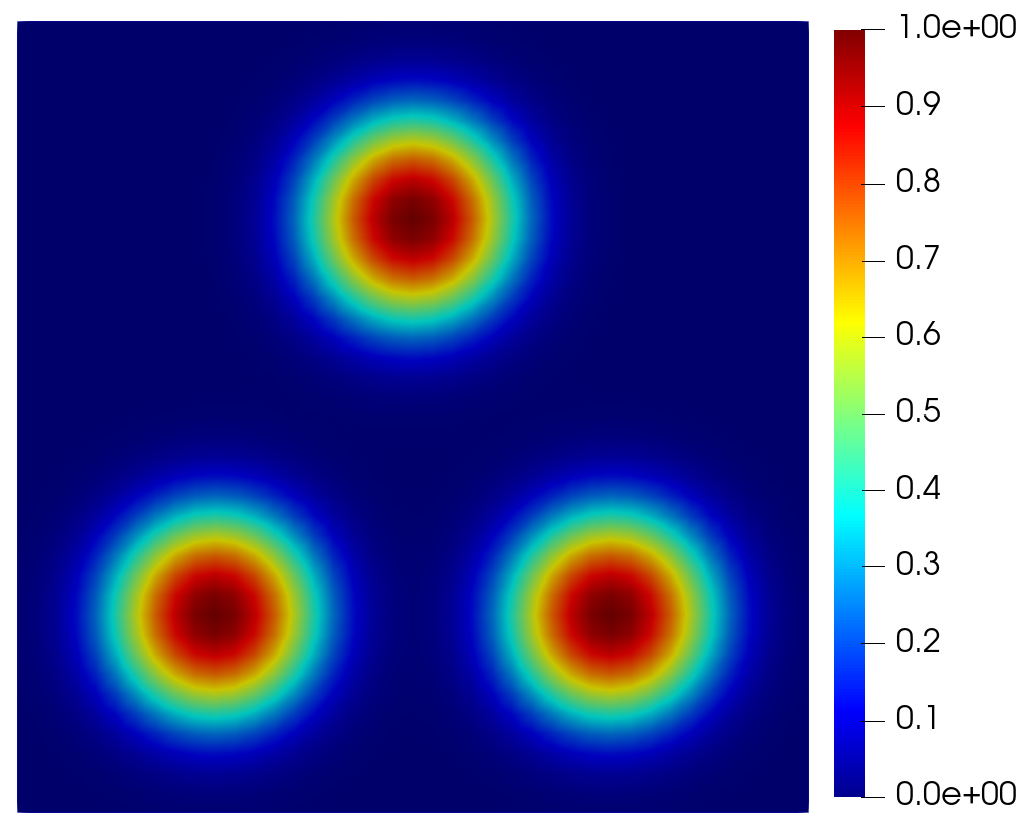}
		\centering
		\caption{ Three tumors.}
	\end{subfigure}

	\caption{ Initial tumor.}
	\label{tumor_inicial}
\end{figure}

we obtain the different equilibrium solutions for the necrosis given in Figure $\ref{necrosis_final}$:

\begin{figure}[H]
	\centering
	\begin{subfigure}[b]{0.25\linewidth}
		\includegraphics[width=1.2\linewidth]{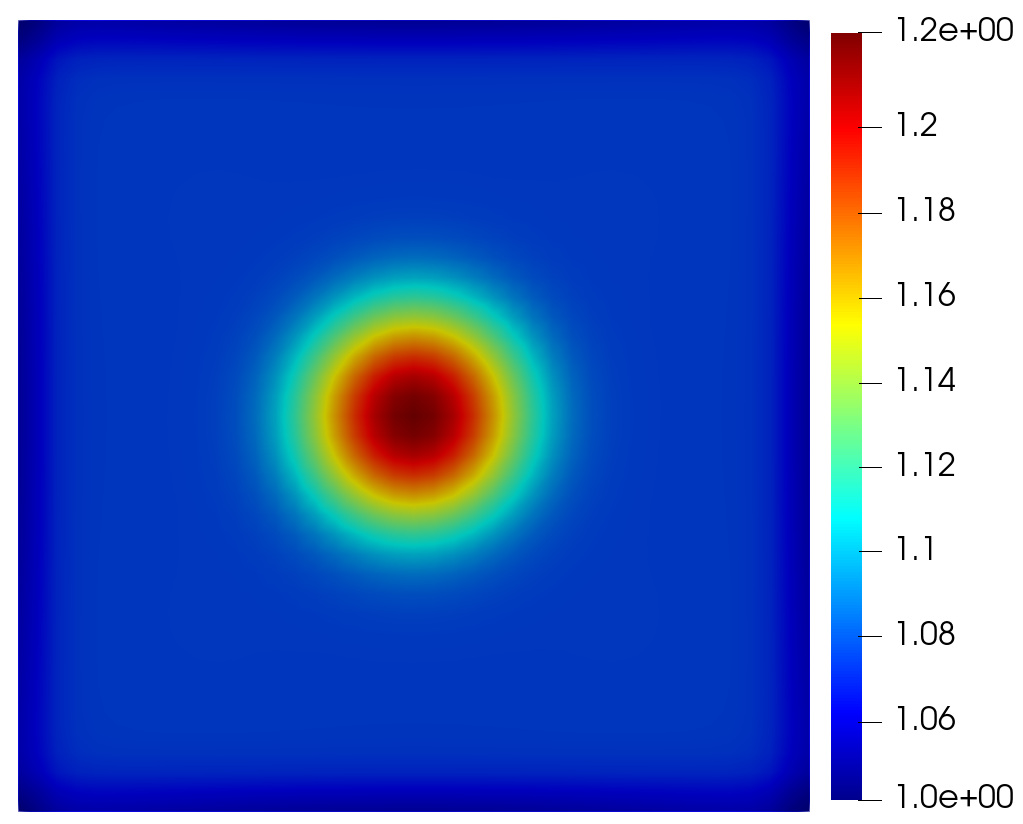}
		\centering
			\caption{One tumor.}
	\end{subfigure}
	\hspace{1cm}
	\begin{subfigure}[b]{0.25\linewidth}
		\includegraphics[width=1.2\linewidth]{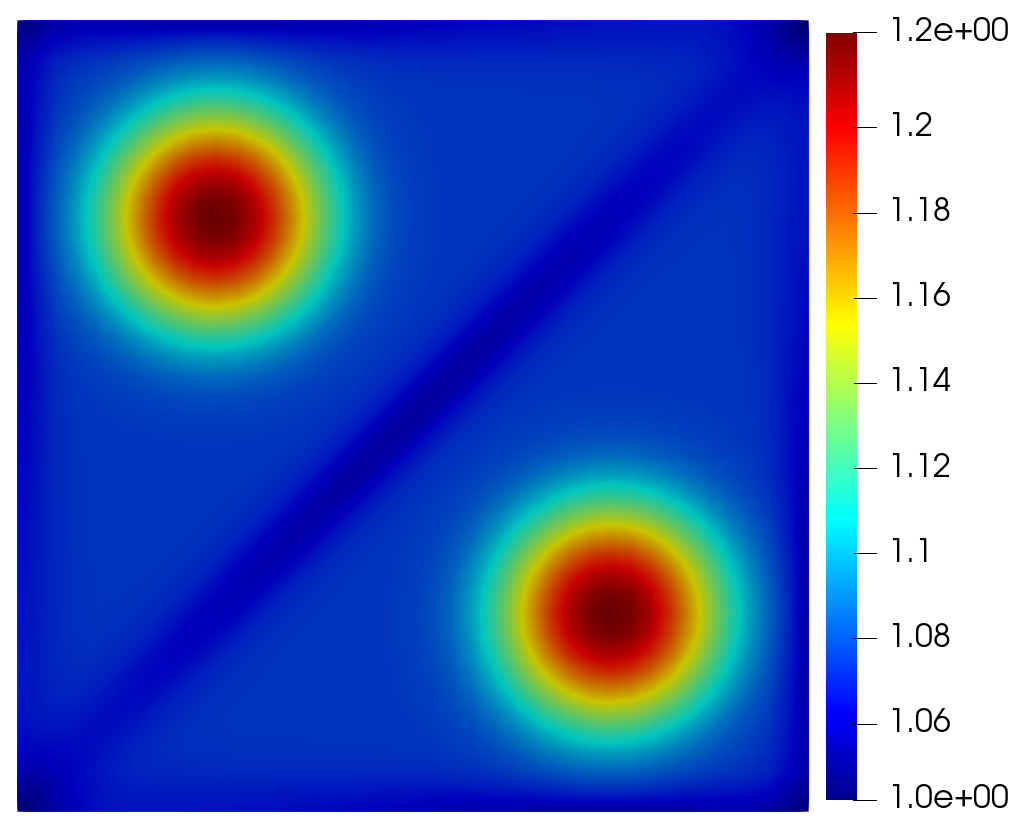}
		\centering
		\caption{Two tumors.}
	\end{subfigure}
	\hspace{1cm}
	\begin{subfigure}[b]{0.25\linewidth}
		\includegraphics[width=1.2\linewidth]{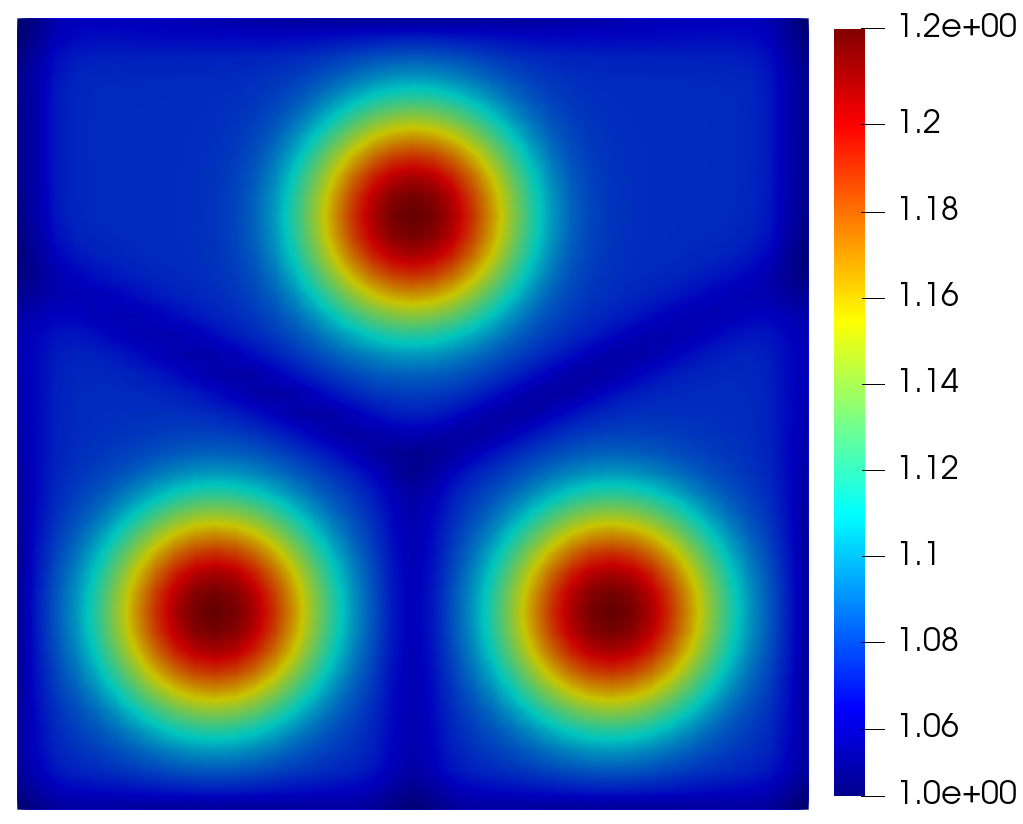}
		\centering
		\caption{Three tumors.}
	\end{subfigure}
	
	\caption{Final necrosis.}
	\label{necrosis_final}
\end{figure}

We observe as necrosis occupies all the domain but mainly where the tumor was initially, and tumor and vasculature disappear in all the cases. Moreover, the maximum value of necrosis is the same in each simulation. 
\\

In order to see the importance of hypothesis $\left(\ref{CondiNlejosKdelta}\right)$, now we consider the value of the parameters as follows:

%
%

\begin{table}[H]
	\centering
	\begin{tabular}{c|c|c|c|c|c|c|c}
		
		Parameter&	$\rho$	& $\alpha$  &  $\beta_1$ & $\beta_2$ & $\gamma$ &$\delta$ & $K$   \\
		\hline
	Value&	$1$ &$0.03$ & $0.03$  & $0.03$ & $0.3$ & $0.03$&$1$  \\
		
	\end{tabular}
	\caption{\label{Table3}Parameters value.}
\end{table}

where $\left(\ref{CondiNlejosKdelta}\right)$ is not satisfied. Then, starting with the same initial conditions for the tumor given in Figure $\ref{tumor_inicial}$, we obtain the different equilibrium solutions for the necrosis given in Figure $\ref{necrosis_final2}$:

\begin{figure}[H]
	\centering
	\begin{subfigure}[b]{0.25\linewidth}
		\includegraphics[width=1.2\linewidth]{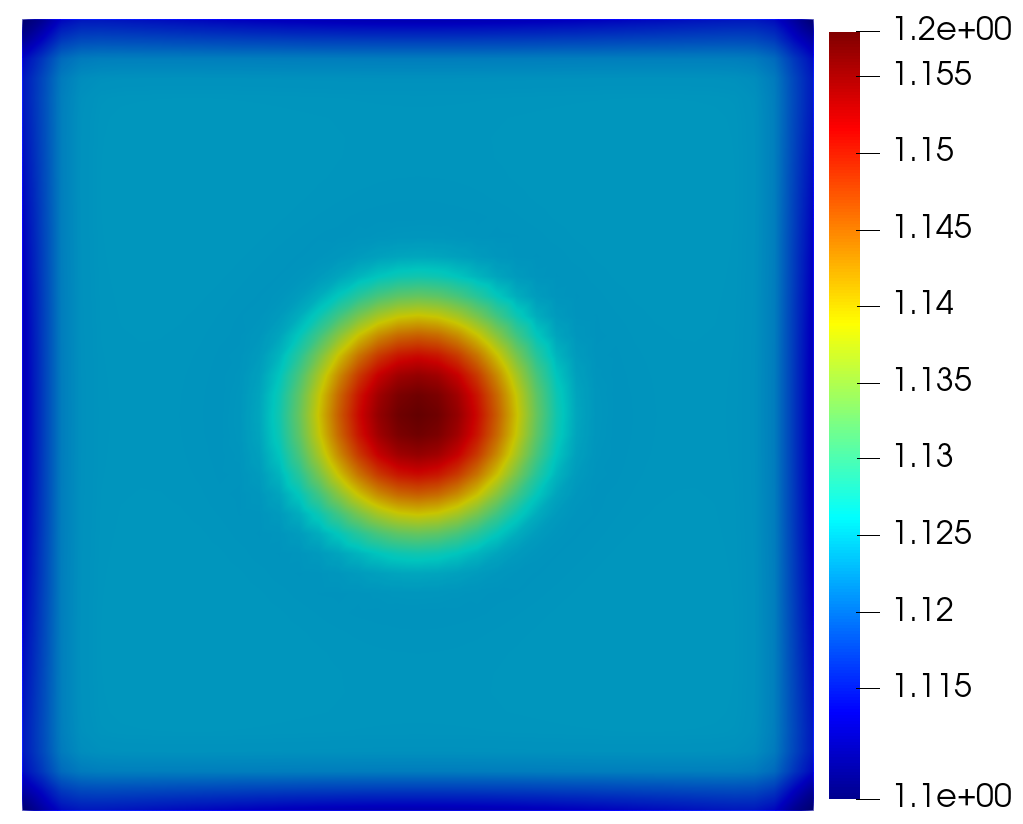}
		\centering
		\caption{One tumor.}
	\end{subfigure}
	\hspace{1cm}
	\begin{subfigure}[b]{0.25\linewidth}
		\includegraphics[width=1.2\linewidth]{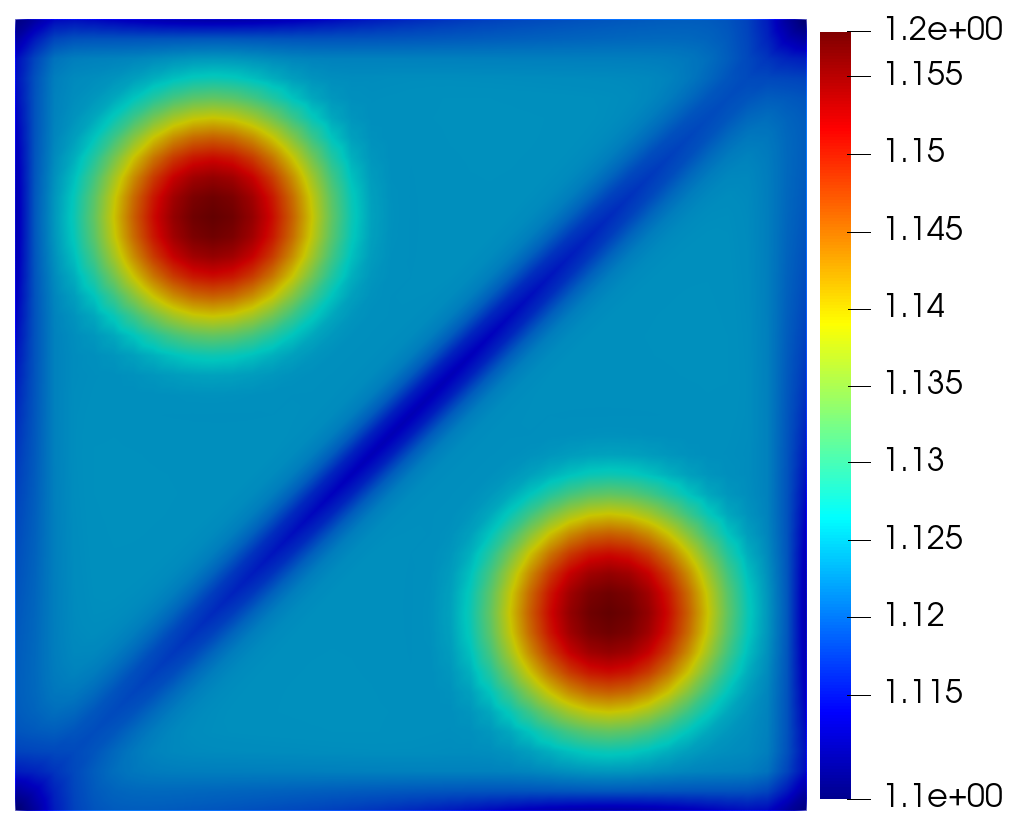}
		\centering
		\caption{Two tumors.}
	\end{subfigure}
	\hspace{1cm}
	\begin{subfigure}[b]{0.25\linewidth}
		\includegraphics[width=1.2\linewidth]{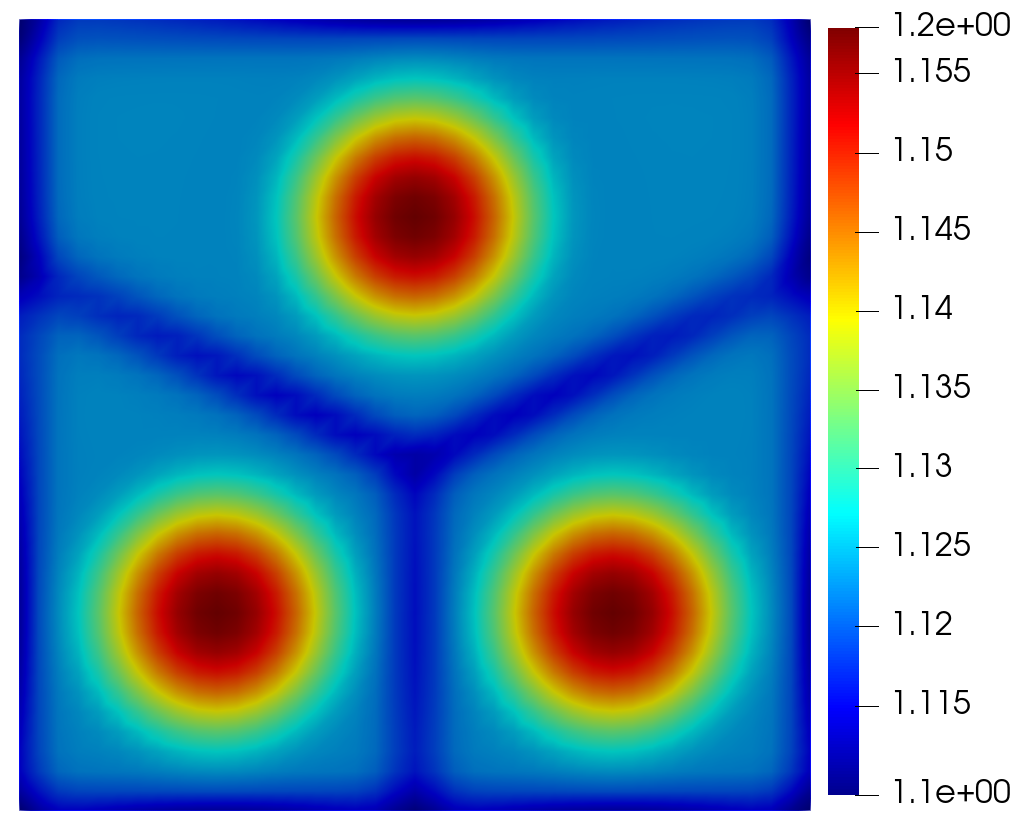}
		\centering
		\caption{Three tumors.}
	\end{subfigure}
	
	\caption{Final necrosis.}
	\label{necrosis_final2}
\end{figure}

We observe a similar behaviour that in Figure $\ref{necrosis_final}$.



%
\section{Conclusion}\label{conclusion}
One of the types of cancers that has attracted the most attention is glioblastoma. In this paper we have completed the model studied in \cite{Victor_2020} introducing a new variable, the vasculature, giving rise a more realistic model. Thus, after the theoretical and numerical study made of $\left(\ref{probOriginal}\right)$-$\left(\ref{condinicio}\right)$, we can conclude:

\begin{enumerate}
	\item The model $\left(\ref{probOriginal}\right)$ is well-posed: we have proved that there exists a unique global in time classical solution of the model.

	\item The long time behaviour of $\left(\ref{probOriginal}\right)$-$\left(\ref{condinicio}\right)$ asserts that vasculature always disappears and, under conditions on the parameters of the problem, tumor proliferative also disappears. Moreover, we show numerical simulations that highlight our results. 
\end{enumerate}


%

\end{document}